\numberwithin{equation}{section}
\def\a{\alpha}       \def\b{\beta}        
\def\s{\sigma}              
                  \def\z{\zeta}
           \def\O{\Omega}
\newcommand{\R}{{\mathbb R}} 
\newcommand{\C}{{\mathbb C}} 
\newcommand{\N}{{\mathbb N}}
\newcommand{\T}{{\mathbb T}} 
\newcommand{\D}{{\mathbb D}}
\newcommand{\Ha}{{\mathbb H}}
\newcommand{\Fi}{\varphi}
\newcommand{\al}{\alpha}
\newcommand{\esp}{{\sigma_p(\Delta\mid_{H^p})}}
\newcommand{\dom}{\textnormal{dom}}
\renewcommand{\Re}{\mathrm{Re}}
\renewcommand{\Im}{\mathrm{Im}}
\newcommand{\norm}[1]{\left\lvert\left\lvert#1\right\rvert\right\rvert}
\DeclareMathOperator*{\PR}{Re}
\DeclareMathOperator*{\Impart}{Im}
\newcommand{\coinf}{\textnormal{co}_\infty}
\newtheorem{theorem}{Theorem}[section]
\newtheorem{lemma}[theorem]{Lemma}
\newtheorem{proposition}[theorem]{Proposition}
\newtheorem{corollary}[theorem]{Corollary}
\theoremstyle{definition}
\newtheorem{definition}[theorem]{Definition}
\newtheorem{example}[theorem]{Example}
\theoremstyle{remark}
\newtheorem{remark}[theorem]{Remark}
\numberwithin{equation}{section}
\date{\today}
\title{On frequencies of parabolic Koenigs domains}
\author[C. G\'omez-Cabello ]{Carlos G\'omez-Cabello}
\address{Carlos Gómez-Cabello 
\newline Department of Mathematics University of the Basque
	Country/Euskal Herriko Unibertsitatea, 48490
Leioa, Spain.
}
\email{carlos.gomezca@ehu.eus}
\author[F. J. González-Doña]{F. Javier González-Doña}
\address{F. Javier González-Doña 
\newline Universidad de Sevilla, Departamento de Matemática Aplicada II, Escuela Técnica Superior de Ingeniería (US), Camino de los Descubrimientos s/n, 41092, Sevilla.}
\email{fgonzalez13@us.es}
\begin{document}
\def\tempAuthorList{\small{CARLOS GÓMEZ-CABELLO AND F. JAVIER GONZÁLEZ-DOÑA}}

\thanks{First author is supported by the
Basque Government through grant IT1615-22. Second author is partially supported by Plan Nacional  I+D grant no. PID2022-137294NB-I00, Spain,
		the Spanish Ministry of Science and Innovation,  through the ``Ayuda extraordinaria a Centros de Excelencia Severo Ochoa'' (20205CEX001) and from `Ayudas para Jóvenes Doctores', Plan Propio de Investigación, Universidad Carlos III de Madrid, 2024/00724/001}
	
	\subjclass[2020]{Primary 47B33, 47D06, 30C45, 30C85}
	
	\keywords{Semigroups of analytic functions, semigroups of composition operators, infinitesimal generators, point spectrum, spectrum, harmonic measure}
    \maketitle
\begin{center}
\tempAuthorList
\end{center}
\begin{abstract}
Let $(\varphi_t)_{t\geq 0}$ be a parabolic semigroup of analytic functions on $\D$, with Koenigs function $h$ and Koenigs domain $\Omega = h(\D)$. We study the point spectrum $\esp$ of $\Delta$, the infinitesimal generator  of the $C_0$-semigroup $(C_{\varphi_t})_{t\geq 0}$ of composition operators on $H^p$. This reduces to characterizing the frequencies of $\Omega$. That is, those $\lambda \in \C$ such that $e^{\lambda h} \in H^p$. We derive containment relations for $\esp$ and provide sufficient conditions for its complete characterization. Our approach relies heavily on the geometric properties of $\Omega$ and on careful estimates of the harmonic measure of some boundary subsets of $\Omega$.  We conclude with some consequences regarding the spectrum of the composition operators $(C_{\varphi_t})_{t\geq 0}$. These results extend a previous work of Betsakos on hyperbolic semigroups.
\end{abstract}

\tableofcontents

\section{Introduction}
Let $X$ be a Banach space of analytic functions in the unit disc $\D$. The composition operator $C_{\varphi}$ of analytic \emph{symbol} $\varphi:\D\to\D$ acting on $X$ is defined by $C_{\varphi}f=f\circ\varphi$, $f\in X$. These operators have historically attracted, and continue to attract, special attention in the field of Complex Analysis and Operator Theory. Among the many interesting questions arising in relation to these operators, the study of the spectrum and the point spectrum has played a prominent role. In this work, we will focus on the case where $X$ is any Hardy space $H^p$ of the unit disc, with $1\leq p<\infty.$ We say that a holomorphic function $f:\D\to\C$ belongs to the Hardy space $H^p$, $0<p<\infty$, if
\begin{equation*}
    \|f\|_{H^p}:=\sup_{0\leq r<1}\left( \frac{1}{2\pi}
     \int_0^{2\pi}|f(re^{i\theta})|^pd\theta
    \right)^{\frac1p}<\infty.
\end{equation*}
It is well known that for $1\leq p<\infty$, $\|\cdot\|_{H^p}$ defines a norm, whereas for $0<p<1$, it is a quasi-norm.

\medskip

The point spectrum of composition operators on $H^p$ spaces, which is characterized via  Schroeder's equation

\begin{equation}\label{Schroder equation}
    f\circ \Fi = \lambda f \qquad (f \in H^p\setminus\{0\}, \lambda \in \C),
\end{equation}
is completely understood when the symbol $\varphi$ is a linear fractional map (\cite{CS,Cowen, Deddens, Kamowitz, Nordgren}). In the general setting, despite the existence of some partial results (see the monograph \cite{CM}), we lack a complete characterization of the point spectrum.

\medskip

    One particularly interesting situation is that of composition operators on $H^p$  embedded into a $C_0$-semigroup $(C_{\Fi_t})_{t\geq 0}$. By the seminal work of Berkson and Porta \cite{BP}, these correspond exactly to the symbols $(\Fi_t)_{t\geq 0}$ that form a continuous semigroup of analytic functions on $\D$. 

    The point spectrum of these composition operators can be characterized in terms of the point spectrum of the infinitesimal generator $\Delta$ of the $C_0$-semigroup (see \cite{EN}). Indeed,
    \begin{equation}\label{spectral mapping theorem point spectrum}
		\sigma_p(C_{\Fi_t}\mid_{H^p})  = \{ e^{t\lambda}: \lambda\in \sigma_p(\Delta\mid_{H^p}) \} \qquad (t>0).
	\end{equation}
	Moreover, Siskakis \cite{Tesis_Siskakis} showed that $\lambda \in \sigma_p(\Delta\mid_{H^p})$ if and only if $e^{\lambda h} \in H^p$, where $h: \D\rightarrow \C$ is the Koenigs function of the semigroup $(\Fi_t)_{t\geq 0}$. Under these considerations, in 2017, Betsakos characterized \cite{Betsakos} the point spectrum of infinitesimal generators of hyperbolic composition semigroups on $H^p$ in terms of the geometry of the associated Koenigs domain $\Omega = h(\D)$. However, the parabolic case remained open.

    \medskip

    In this sense, our main goal is to tackle the problem of characterizing the point spectrum of parabolic composition semigroups on $H^p$. This, quoting Betsakos \cite{Betsakos}, ``is probably much more difficult to describe'' than in the hyperbolic case. As discussed above, such a description is equivalent to characterizing those $\lambda \in \C$ such that $e^{\lambda h} \in H^p$.

    \medskip

    It is important to point out that this problem is equivalent to studying which exponentials $e^{\lambda z}$ belong to the Hardy space $H^p(\Omega)$, which is the space of holomorphic functions $f:\Omega \rightarrow \C$ such that $|f|^p$ has a harmonic majorant in $\Omega$. Those $\lambda \in \C$ satisfying that $e^{\lambda z}\in H^p(\Omega)$ are called the \textit{frequencies} of $\Omega$. In a very recent paper \cite{BGY}, Bracci, Gallardo-Gutiérrez, and Yakubovich have characterized whether the linear manifold spanned by the $e^{\lambda z}\in H^\infty(\Omega)$ is
    weak-$*$ dense in $H^\infty(\Omega)$ in terms of geometric conditions on the Koenigs domain $\Omega$. In their work, the authors obtain sufficient conditions for the norm-density of such a manifold in $H^p$.

    \medskip

    The methods developed in this text are largely based on careful estimates of the harmonic measure of some boundary subsets of $\Omega$. In this sense, these techniques are related to both classical and recent results on diverse problems. For instance, the behaviour of the orbits of semigroups on $\D$ (see \cite{BeCD} or the most recent \cite{BCZ}), the characterization of the Hardy number of Koenigs domains \cite{CCKR}, as well as the computation of the essential spectral radius of composition operators on $H^2$ \cite{Poggi-Corradini}.

    \medskip

It is also worth mentioning the existence of another recent work \cite{kourouetal} on the topic. There, the authors study the problem in several Banach spaces of analytic functions in the unit disc, especially focusing on the Dirichlet space. The results they prove for Hardy spaces $H^p$ correspond to Theorem \ref{teorema contenciones general} in this manuscript. We focus our attention on $H^p$ spaces and we are concerned with describing $\esp$ in terms of the geometric features of the domain $\Omega$.

    \medskip

    The manuscript is organized as follows:
    
    \medskip

In Section \ref{seccion toolbox}, we develop some technical tools that will play a central role in the remainder of the text. First, we solve the problem for the specific situation of $\O$ being an angular sector, and present some general results regarding $\esp$. Later, we provide sufficient geometric conditions on $\Omega$ to assure the membership of $e^{\lambda h}$ to the Smirnov class $\mathcal{N}^+$, which will allow us to express its norm in terms of harmonic measures (Corollary \ref{corolario formula norma}). We finish by providing explicit computations of the harmonic measure of some particular boundary subsets of angular sectors and vertical strips, which will be needed in the forthcoming sections. 

    \medskip

  Section \ref{seccion general} is devoted to general parabolic Koenigs domains $\Omega$. Under the assumption of $\Omega$ being contained in an angular sector of opening angle less than or equal to $\pi$, we introduce the concepts of maximal angles. These notions will allow us to state some containment relations for $\esp$ (Theorem \ref{teorema contenciones general}). In order to study the membership to $\esp$ of those $\lambda\in \C$ not covered by this result, we study the boundary of $\Omega$ through the defining functions of the starlike at infinity hull of its rotations (see Definition \ref{definicion hull} and the discussion afterwards). Making use of these new concepts, together with estimates involving  harmonic measure, we derive sufficient conditions for $\lambda \in \esp$ (Theorem \ref{teorema no convexo por arriba} and Theorem \ref{teorema no convexo por arriba no contiene sector}). These results allow us to completely characterize $\esp$ in various situations (Corollary \ref{corolario no convexo contiene sector}, Corollary \ref{corolario no convexo no contiene sector}). Moreover, by adding a convexity assumption on $\Omega$, we obtain necessary conditions that allow us to characterize $\esp$ for all situations, up to two points (Subsection \ref{subseccion convexo}).

\medskip

Finally, Section \ref{seccion espectro} focuses on studying the spectrum of the associated composition operators $(C_{\Fi_t})_{t\geq 0}$ on $H^p$, $1\leq p<\infty$. Under some geometric assumptions on $\Omega$, we are able to completely characterize $\sigma(C_{\Fi_t}\mid_{H^p})$ for all $t>0$ (Theorem \ref{teorema espectro}). In particular, this result may be applied to a subclass of composition operators with symbols of positive hyperbolic step (Corollary \ref{corolario paso hiperbolico}).

\medskip

We conclude this introductory section by recalling some notions on semigroups of analytic functions on $\D$, hyperbolic distance, and harmonic measure that will be needed through the exposition.
\medskip

	\subsection{Preliminaries}
	
	As pointed out previously, $(C_{\Fi_t})_{t\geq 0}$ is a $C_0$-semigroup acting on $H^p$, $1\leq p < \infty$ if and only if the family of symbols $(\Fi_t)_{t\geq 0}$ is a continuous semigroup of analytic functions on $\D$. For this to occur, the following three conditions must hold:
	\begin{enumerate}
		\item [(i)] $\Fi_0(z) = z$ for every $z \in \D.$
		\item [(ii)] $\Fi_{t+s} = \Fi_t \circ \Fi_s$ for every $t,s \geq 0.$
		\item [(iii)] If $t\rightarrow s,$ then $\Fi_t \rightarrow \Fi_s$ uniformly on compact subsets of $\D$ for $t,s\geq 0$.
	\end{enumerate}

	\medskip
	
	Let $(\Fi_t)_{t\geq 0}$ be a continuous semigroup which does not contain automorphisms of $\D$ with a fixed point in $\D$. In such a case, there exists a unique point $\tau\in \overline \D$ such that $(\Fi_t)_{t\geq 0}$ converges uniformly on compacta, as $t$ goes to $+\infty$, to the constant map $z\mapsto \tau$. The point $\tau$ is called the {\sl Denjoy-Wolff point} of $(\Fi_t)_{t\geq 0}$. This important feature of continuous semigroups on $\D$ allows us to classify them depending on the location of the Denjoy-Wolff point. If $\tau\in\D$, the semigroup is said to be \emph{elliptic}. If $\tau\in\T:=\partial\D$, we say that the semigroup is \emph{non-elliptic}. It is known that the functions $\Fi_t'$ have non-tangential limits $e^{t\al}$ at the Denjoy-Wolff point $\tau,$ with $\al \leq 0$. This allows a further classification of non-elliptic semigroups. Indeed, $(\Fi_t)_{t\geq 0}$ is said to be \textit{hyperbolic} if $\al < 0$ and \textit{parabolic} if $\al = 0.$
	
	\medskip
	
	We will focus our attention on non-elliptic semigroups. If $(\Fi_t)_{t\geq 0}$ is non-elliptic, there exists a univalent function $h:\D\rightarrow \C$ satisfying
	\begin{equation}\label{koenigs function}
		\Fi_t(z) = h^{-1}(h(z)+t) \qquad (z\in \D, \ t\geq 0).
	\end{equation}
The function $h$ is unique, up to an additive constant. We refer to the function $h$ as the \textit{Koenigs function} of the semigroup, see  \cite[Theorem 9.3.5]{BCD}. Its image, $\Omega := h(\D)$, is the \textit{Koenigs domain} of the semigroup. This domain is \textit{starlike at infinity}, meaning that, for every $z\in \Omega$ and every $t \geq 0$, $z+t \in \Omega.$ Throughout the paper, $\Omega$ will always stand for the Koenigs domain  of the semigroup $(\Fi_t)_{t\geq 0}$.  
	\medskip
	
We will use a recent fruitful description of the Koenigs domain. Indeed, every Koenigs domain $\Omega$ can be described as \label{pagina defining functions}
	$$\{x+iy \in \C: y \in I, \ x>\psi_\Omega(y)\},$$ where $\psi_\Omega :I \rightarrow [-\infty,+\infty)$ is an upper semicontinuous function and $I\subset (-\infty,\infty)$ is an open connected interval. This function is known as the \textit{defining function} of $\Omega$ (see \cite[Definition 2.3]{BGY}).
	
	\medskip
	
	The geometry of the Koenigs domain encodes valuable information about the semigroup. In this respect, a semigroup $(\Fi_t)_{t\geq 0}$ is parabolic if and only if its Koenigs domain is not contained in a horizontal strip \cite{CD}. Thus, $\Omega$ is parabolic if and only if $\dom(\psi_\Omega)$ is not bounded. We will call \textit{parabolic Koenigs domain} a Koenigs domain not contained in a horizontal strip.
	
	\medskip

    It is standard in the literature to distinguish two classes of parabolic semigroups.  A parabolic semigroup $(\Fi_t)_{t\geq 0}$ is said to be of \textit{positive hyperbolic step} if for some $z \in \D$ the quantity $$\lim\limits_{t\rightarrow +\infty} k_\D(\Fi_t(z),\Fi_{t+1}(z))$$ is positive, where $k_\D$ denotes the hyperbolic distance in $\D$. If the previous situation does not hold, $(\Fi_t)_{t\geq 0}$ is of \textit{zero hyperbolic step}. This property can also be characterized through the geometry of $\Omega$. In fact, $(\Fi_t)_{t\geq 0}$ is of positive hyperbolic step if and only if $\Omega$ is contained in a horizontal half-plane \cite[Theorem 9.3.5 and Proposition 9.3.10]{BCD}.

There exists another object closely linked to the continuous semigroup: the \emph{infinitesimal generator}.  A remarkable result of Berkson and Porta \cite{BP} asserts that each continuous semigroup of holomorphic self-maps of $\D$ is locally uniformly differentiable with respect to the parameter $t\geq 0.$  Not only this, but the function $t\mapsto\Fi _{t}(z)$, $z\in\D$, is the solution of the Cauchy problem:
\begin{equation}\label{cauchy}
\frac{\partial \Fi_t(z)}{\partial t} = G(\Fi_t(z))\quad
\mbox{and} \quad \Fi _{0}(z)= z\in \D.
\end{equation}
The function $G$ is called the {\sl infinitesimal generator} of
the semigroup $( \Fi _{t})_{t\geq0}$. There exists an intimate connection between the infinitesimal generators of the semigroup of symbols $( \Fi _{t})_{t\geq0}$ and the semigroup of operators. In fact, if $(C_{\Fi_t})_{t\geq 0}$ is the associated $C_0-$semigroup, then
	\begin{equation}\label{expresion generador infinitesimal}
		\Delta f  = G\cdot f' \qquad (f\in H^p)
	\end{equation}	is its 	infinitesimal generator, which, we recall, is defined as
    $$
    \Delta f = \lim\limits_{t\to 0^+} \frac{C_{\Fi_t}f-f}{t} \qquad (f \in \dom(\Delta)).
    $$
    It turns out that $\Delta$ is a closed densely defined linear operator acting on $H^p$.

\medskip 

In what follows, unless stated otherwise, given $(\Fi_t)_{t\geq 0}$ a parabolic semigroup on $\D$, we will denote by $h$  its associated Koenigs function and by $\Omega$ its associated Koenigs domain. We will assume without loss of generality that $0\in \Omega$. Likewise, we will denote by $\Delta$ the infinitesimal generator of the $C_0$-semigroup $(C_{\Fi_t})_{t\geq 0}$ on $H^p$, $1\leq p< \infty$.

\medskip

Another useful tool will be the hyperbolic distance. Given a hyperbolic planar domain $\Omega$, we denote by $k_{\Omega}(z,w)$, $z,w\in\Omega$, the \textit{hyperbolic distance} in $\Omega$ between $z,w\in\Omega$. A simply connected domain is an example of a hyperbolic planar domain. Since we will be dealing with this particular class of domains, the conformal invariance of the hyperbolic distance allows us to compute the hyperbolic distance in a wide range of simply connected domains, such as the right half-plane $\C_+$, where
	\begin{equation*}
		k_{\C_+}(z,w)=\frac12\log
		\left(
		\frac{1+|T_w(z)|}{1-|T_w(z)|}
		\right),\quad z,w\in\C_+,
	\end{equation*}
	with $T_w(z)=(z-w)/(z+\overline{w})$. Another property which will be used in the course of the proofs is the continuity of the hyperbolic distance (\cite[Proposition 1.3.14]{BCD}).

    \medskip

	Harmonic measure will play a crucial role in the proof of the main results of this work. We now recall its definition, and some of its basic properties. For a more detailed exposition on the topic, the reader is referred to \cite{Ran}, \cite{GaMA}, or \cite{BCD} for simply connected domains.
	
	\medskip
    
	Let $\Omega$ be a domain in the complex plane whose boundary has positive logarithmic capacity, i.e., it is non-polar. Consider $E$ a Borel subset of $\partial_\infty \Omega$. The harmonic measure of $E$ relative to $\Omega$ is the generalized Perron-Wiener solution $u$ of the Dirichlet problem for the Laplacian in $\Omega$, where the boundary values are given by the characteristic function of the set $E$. We will use the standard notation
	\begin{equation*}\label{hm1}
		u(z)=\omega(z,E,\Omega),\quad z\in \Omega.
	\end{equation*}
	In our setting, we will deal only with simply connected domains. The complement of every simply connected domain $\Omega$ contains a continuum, which is a non-polar set \cite[Corollary 3.8.5]{Ran}. Then, the harmonic measure is well defined for these domains.
	
	\medskip
    
	A  nice property of harmonic measure is its conformal invariance (see \cite[Theorem 4.3.8]{Ran}). Another useful tool is Harnack's inequality. Indeed,	it allows one to relate the value of the harmonic function $\omega(\cdot,E,\Omega)$ at two different points $z,w\in\Omega$ (\cite[Theorem 1.3.1]{Ran}). In the setting of simply connected domains, we can explicitly compute the constant appearing in Harnack's inequality, which takes the following form (see \cite[Section 1.3]{Ran} and \cite[Corollary 4.3.5]{Ran}):
	\begin{theorem}\label{teorema distancia hiperbolica}
		Let $\Omega\subsetneq\C$ be a simply connected domain and let $E$ be a Borel subset of $\partial_{\infty}\Omega$. Then, for every $z_1,z_2\in\Omega$
		\[
		\omega(z_1,E,\Omega)\leq e^{2k_{\Omega}(z_1,z_2)}\omega(z_2,E,\Omega).
		\]
	\end{theorem}

		We will use the notation $f(x)\lesssim g(x)$ if there is some constant $C>0$ such that $|f(x)|\leq C|g(x)|$ for all $x$. If we have simultaneously $f(x)\lesssim g(x)$ and $g(x)\lesssim f(x)$, we write $f\approx g$. The use of this notation will imply that the underlying constants are absolute, which means that they do not depend on any other magnitude. 

    \     
    Also, given $\theta\in\R$, we define $\C_{\theta}:=\{z\in\C: \ \Re(z)>\theta\}$ and $\C_+=\C_0$.

\section{Toolbox: angular sectors, Smirnov class, and harmonic measure }\label{seccion toolbox}
 For the sake of illustrating some of the main ideas that will appear in Section \ref{seccion general}, we begin by characterizing $\esp$ for parabolic semigroups whose Koenigs domain $\Omega$ is an angular sector.

\subsection{The case of angular sector domains}	We begin by defining angular sectors. Given $z_0 \in \C$ and $\a,\b \in [0,\pi]$ with $\a+\b >0$, we define the angular sector centred on $z_0$, with lower angle $\al$ and upper angle $\beta$ as
$$S_{z_0}(\al,\beta) = z_0+\{re^{i\theta}: r>0, \ -\al < \theta < \beta \}. $$
To simplify the notation, we will write $S(\al,\beta):=S_0(\al,\beta)$, and $S_{z_0}(\al):=S_{z_0}(\al,\al)$.
	
	\medskip
	
	Recall that, for all $z_0$ and $\a,\b \in [0,\pi]$ with $\a+\b >0$, $S_{z_0}(\al,\beta)$ is a Koenigs domain, with associated Koenigs function  
	$$ h(z) = z_0 + e^{i\frac{\beta-\al}{2}}\left( \frac{1+z}{1-z}\right)^{\frac{\al+\beta}{\pi}} \qquad (z\in\D).$$
	
	\medskip
	
	The main result reads as follows:
	
	\begin{theorem}\label{espectro sector angular}
		Let $(\Fi_t)_{t\geq0}$ be a parabolic semigroup on $\D$. Assume that $\Omega = S_{z_0}(\al,\beta)$ for some $z_0\in \C$, $\al,\beta\in[0,\pi]$ and $\al+\beta > 0$. 
		\begin{enumerate}
			\item [(i)] If $\al+\beta \leq \pi$, then
			$$
			\sigma_p(\Delta\mid_{H^p}) = \left\{re^{i\theta}: r\geq 0, \ \frac{\pi}{2}+\al \leq \theta \leq \frac{3\pi}{2}-\beta \right\}.
			$$
			\item [(ii)] If $\al+\beta > \pi$, then
			$$\sigma_p(\Delta\mid_{H^p})=\{0\}.$$
		\end{enumerate}
	\end{theorem}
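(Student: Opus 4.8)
The plan is to reduce the whole question to the membership of a single exponential in a Hardy space over a half-plane, where it can be settled by a growth estimate for non-negative harmonic functions. As recalled in the introduction, Siskakis' theorem together with the conformal invariance of Hardy spaces (in the harmonic-majorant sense) gives $\esp=\{\lambda\in\C : e^{\lambda h}\in H^p\}=\{\lambda\in\C : e^{\lambda z}\in H^p(\Omega)\}$, i.e.\ $\esp$ is exactly the set of frequencies of $\Omega$. Since the translation $z\mapsto z-p_0$ maps $S_{p_0}(\al,\beta)$ conformally onto $S(\al,\beta)$ and turns $e^{\lambda z}$ into $e^{\lambda p_0}e^{\lambda z}$, a nonzero constant multiple, we may assume $p_0=0$; thus $\Omega=S(\al,\beta)$ is a cone with vertex $0$ and angular opening $\omega:=\al+\beta\in(0,2\pi]$. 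For $\lambda=0$ we have $1\in H^\infty(\Omega)\subseteq H^p(\Omega)$, so $0\in\esp$ always; this accounts for the point $r=0$ in both formulas.

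Next, for $\lambda\ne0$, the map $\Phi(z)=(e^{i\al}z)^{\pi/\omega}$ is a conformal bijection of $\Omega$ onto the upper half-plane $\Ha$, with inverse $\Phi^{-1}(\zeta)=e^{-i\al}\zeta^{\omega/\pi}$, so that
\begin{equation*}
    e^{\lambda z}\in H^p(\Omega)\iff g:=\exp\!\big(c\,\zeta^{\omega/\pi}\big)\in H^p(\Ha),\qquad c:=\lambda e^{-i\al}\ne0 .
\end{equation*}
Writing $\zeta=\rho e^{i\psi}$ with $0<\psi<\pi$ gives $|g(\zeta)|=\exp\!\big(|c|\,\rho^{\omega/\pi}\cos(\arg c+\tfrac{\omega}{\pi}\psi)\big)$, where $\arg c+\tfrac{\omega}{\pi}\psi$ ranges over the interval $(\theta-\al,\ \theta+\beta)$ with $\theta:=\arg\lambda$ (this is precisely the arc of directions of the rotated cone $\lambda\Omega$). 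Two cases arise. If $\cos(\,\cdot\,)\le 0$ throughout that interval — which, as it has length $\omega$, forces $\omega\le\pi$ and is then equivalent to $[\theta-\al,\ \theta+\beta]\subseteq[\tfrac{\pi}{2},\tfrac{3\pi}{2}]\pmod{2\pi}$, i.e.\ to $\tfrac{\pi}{2}+\al\le\theta\le\tfrac{3\pi}{2}-\beta$ — then $|g|\le 1$ on $\Ha$, whence $g\in H^\infty(\Ha)\subseteq H^p(\Ha)$ and $\lambda$ is a frequency. Otherwise there is $\psi_0\in(0,\pi)$ with $\kappa:=\cos(\arg c+\tfrac{\omega}{\pi}\psi_0)>0$, and along the ray $\psi=\psi_0$ one has $|g|^p=\exp(p|c|\kappa\,\rho^{\omega/\pi})$, growing faster than any power of $\rho$.

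The key step is to show that in this second case $g\notin H^p(\Ha)$. I would isolate as a lemma (which is reused later) the fact that every non-negative harmonic function $v$ on $\Ha$ grows at most linearly along interior rays: $v(\rho e^{i\psi_0})=O(\rho)$ as $\rho\to\infty$ for each fixed $\psi_0\in(0,\pi)$. This follows from the Herglotz--Riesz representation $v(\zeta)=a\,\Im\zeta+\tfrac{1}{\pi}\int_{\R}\tfrac{\Im\zeta}{|\zeta-t|^2}\,d\mu(t)$, with $a\ge 0$ and $\int(1+t^2)^{-1}\,d\mu(t)<\infty$, together with the elementary inequality $|\rho e^{i\psi_0}-t|^2\ge c_{\psi_0}(\rho^2+t^2)$. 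Granting the lemma, a non-negative harmonic majorant of $|g|^p$ on $\Ha$ would have to dominate $\exp(p|c|\kappa\,\rho^{\omega/\pi})$ along the ray $\psi=\psi_0$, which is impossible; hence $g\notin H^p(\Ha)$ and $\lambda$ is not a frequency. Putting the two cases together: for $\lambda\ne 0$, $\lambda$ is a frequency of $\Omega$ if and only if $\al+\beta\le\pi$ and $\tfrac{\pi}{2}+\al\le\arg\lambda\le\tfrac{3\pi}{2}-\beta$. If $\al+\beta\le\pi$ this gives $\esp=\{re^{i\theta}:r\ge 0,\ \tfrac{\pi}{2}+\al\le\theta\le\tfrac{3\pi}{2}-\beta\}$ (the interval being nonempty precisely because $\al+\beta\le\pi$), which is (i); if $\al+\beta>\pi$ no nonzero $\lambda$ qualifies, so $\esp=\{0\}$, which is (ii).

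I expect the main obstacle to be making rigorous the implication ``grows faster than any power along an interior ray $\Rightarrow$ no non-negative harmonic majorant exists $\Rightarrow$ not in $H^p$'': one must be careful that the majorant is harmonic on the open half-plane and that the growth is measured strictly inside $\Ha$, and one must prove (or quote from Phragmén--Lindelöf theory) the linear-growth lemma, which is the only genuinely analytic input. The conformal reductions and the angle bookkeeping with the arc $[\theta-\al,\theta+\beta]$ are routine.
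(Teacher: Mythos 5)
Your argument is correct, but it follows a genuinely different route from the paper. The paper disposes of the sector case almost entirely by citation: after translating so that $p_0=0$ and using the rotation Lemma \ref{lema giro} to reduce to the symmetric case $\al=\beta$, part (i) is read off from Cowen's computation of $\sigma_p(C_{\Fi_t})$ for such symbols together with the spectral mapping identity \eqref{spectral mapping theorem point spectrum}, and part (ii) follows by observing that $\Omega$ then contains two distinct rotations of $\C_+$ and invoking the domain-monotonicity of the point spectrum (Proposition \ref{contencion espectro puntual}). You instead work directly with the frequency formulation: transplant $e^{\lambda z}$ to the upper half-plane via $\zeta\mapsto e^{-i\al}\zeta^{\omega/\pi}$, compute $|g(\rho e^{i\psi})|=\exp\bigl(|c|\rho^{\omega/\pi}\cos(\arg c+\tfrac{\omega}{\pi}\psi)\bigr)$, and split according to whether the cosine is everywhere $\leq 0$ on the arc $(\theta-\al,\theta+\beta)$ (giving $g\in H^\infty(\Ha)$) or positive somewhere (giving super-polynomial growth along an interior ray, which is incompatible with the $O(\rho)$ ray-growth of non-negative harmonic functions on $\Ha$ furnished by the Herglotz--Riesz representation). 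The lemma you isolate is correct as stated and proved, and it carries the analytic weight that the paper delegates to Cowen; your Case 2 also handles (ii) in one stroke without Proposition \ref{contencion espectro puntual}, since an arc of length $\omega>\pi$ cannot avoid the set where cosine is positive. What the paper's route buys is brevity and alignment with the machinery (rotation lemma, monotonicity) reused later; what yours buys is a self-contained, uniform proof of both parts, and it makes transparent why the answer is exactly the set of $\lambda$ for which $\lambda\Omega$ lies in a closed left half-plane, including the borderline rays $\theta=\tfrac{\pi}{2}+\al$ and $\theta=\tfrac{3\pi}{2}-\beta$, which in the citation-based proof are hidden inside Cowen's computation. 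Both proofs rest on Siskakis' characterization $\lambda\in\esp\iff e^{\lambda h}\in H^p$ and the conformal invariance of harmonic-majorant Hardy spaces, which you use correctly.
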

	\begin{remark}
		Notice that, if $\al+\beta = \pi$, then $S_{z_0}(\al,\beta)$ is a half-plane, which implies that $(\Fi_t)_{t\geq 0}$ is a semigroup of linear fractional maps. In fact, it is a semigroup of invertible elements (i.e., a group) if and only if $\al=\pi$ and $\beta=0$ or $\al=0$ and $\beta = \pi$ (see, for instance, \cite{Siskakis}).
	\end{remark}
	To prove Theorem \ref{espectro sector angular}, we will need the following results:
	
	\begin{proposition}\label{contencion espectro puntual}\cite[Section 6, Proposition 1]{Betsakos}
		Let $(\Fi_t)_{t\geq 0}$ and $(\tilde{\Fi}_t)_{t\geq 0}$  be two parabolic semigroups, with Koenigs domains $\Omega$ and $\Tilde{\Omega}$ and infinitesimal generators of the associated $C_0$-semigroups $\Delta$ and $\Tilde{\Delta}$, respectively. If $\tilde{\Omega}\subseteq \Omega$, then $\sigma_p(\Delta)\subseteq \sigma_p(\tilde{\Delta}).$
	\end{proposition}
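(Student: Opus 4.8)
The plan is to reduce the operator-theoretic inclusion to a monotonicity property of frequencies, using the two equivalences recalled in the Introduction. Write $h$ and $\tilde h$ for the Koenigs functions of $(\Fi_t)_{t\geq 0}$ and $(\tilde\Fi_t)_{t\geq 0}$, so that $\Omega = h(\D)$ and $\tilde\Omega = \tilde h(\D)$. By the result of Siskakis recalled above, $\lambda \in \sigma_p(\Delta)$ exactly when $e^{\lambda h} \in H^p$, and likewise $\lambda \in \sigma_p(\tilde\Delta)$ exactly when $e^{\lambda \tilde h} \in H^p$. Thus the statement is equivalent to the implication: if $\tilde\Omega \subseteq \Omega$ and $e^{\lambda h} \in H^p$, then $e^{\lambda \tilde h} \in H^p$.

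The natural way to carry this out is to pass to the domain side, where the inclusion hypothesis can be used directly. As noted in the Introduction, the conformally invariant harmonic-majorant description of the Hardy space gives $e^{\lambda h} \in H^p$ if and only if $e^{\lambda z} \in H^p(\Omega)$, that is, $\lambda$ is a frequency of $\Omega$; and similarly $e^{\lambda \tilde h} \in H^p$ if and only if $\lambda$ is a frequency of $\tilde\Omega$. Hence the proposition reduces to showing that every frequency of $\Omega$ is a frequency of the smaller domain $\tilde\Omega$, which no longer refers to either conformal map.

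First I would unwind the definition of frequency. That $\lambda$ is a frequency of $\Omega$ means that the function $z \mapsto |e^{\lambda z}|^p = e^{p\,\Re(\lambda z)}$ admits a harmonic majorant $u$ on $\Omega$, i.e. $u$ is harmonic on $\Omega$ and $e^{p\,\Re(\lambda z)} \le u(z)$ for every $z \in \Omega$. Since $\tilde\Omega$ is an open subset of $\Omega$, the restriction $u|_{\tilde\Omega}$ is harmonic on $\tilde\Omega$, and the pointwise inequality persists on $\tilde\Omega \subseteq \Omega$. Therefore $u|_{\tilde\Omega}$ is a harmonic majorant of $e^{p\,\Re(\lambda z)}$ on $\tilde\Omega$, which is precisely the assertion that $\lambda$ is a frequency of $\tilde\Omega$. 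Tracing the two equivalences backwards yields $e^{\lambda\tilde h}\in H^p$, i.e. $\lambda \in \sigma_p(\tilde\Delta)$, and hence $\sigma_p(\Delta)\subseteq \sigma_p(\tilde\Delta)$.

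I do not expect a genuine obstacle here: the heart of the argument is the elementary monotonicity of Hardy spaces of planar domains under inclusion, namely that a harmonic majorant on the larger domain restricts to one on the smaller. The only point requiring care is the bookkeeping of the two distinct conformal maps $h$ and $\tilde h$, which is handled cleanly by working with the intrinsic description $H^p(\Omega)$ rather than with the pullbacks to $\D$, so that the hypothesis $\tilde\Omega\subseteq\Omega$ enters through a single restriction of the harmonic majorant.
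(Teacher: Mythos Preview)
Your argument is correct: the reduction to frequencies via Siskakis's characterization, followed by the elementary monotonicity of harmonic majorants under domain inclusion, is exactly the intended mechanism. The paper does not supply its own proof of this proposition but simply quotes it from \cite{Betsakos}, so there is nothing further to compare; your write-up is the standard (and essentially the only) route.
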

	\begin{lemma}\label{lema giro}
		Let $(\Fi_t)_{t\geq 0}$ be a parabolic semigroup. Assume that there exists $e^{i\theta}\in \T$ such that $e^{i\theta}\Omega$ is also a Koenigs domain, and denote by $\Tilde{\Delta}$ the infinitesimal generator of the $C_0-$semigroup associated to $e^{i\theta}\Omega$. 
		Then, $$\sigma_p(\tilde{\Delta}) = e^{-i\theta}\sigma_p(\Delta).$$
	\end{lemma}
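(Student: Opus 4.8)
The plan is to reduce everything to the description of the point spectrum as the set of \emph{frequencies} of the Koenigs domain. Recall that, by Siskakis' theorem combined with the identification of $\esp$ with the frequencies of $\Omega$, one has
\[
\sigma_p(\Delta\mid_{H^p}) = \{\lambda\in\C : e^{\lambda z}\in H^p(\Omega)\},
\]
and likewise $\sigma_p(\tilde\Delta\mid_{H^p}) = \{\mu\in\C : e^{\mu z}\in H^p(e^{i\theta}\Omega)\}$, the latter being meaningful because the hypothesis that $e^{i\theta}\Omega$ is again a Koenigs domain guarantees that $\tilde\Delta$ is defined, and because the point spectrum only depends on the Koenigs domain (not on the particular choice of Koenigs function for it).

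Next I would record that $H^p$ of a planar domain is covariant under rotations. Consider the affine bijection $R\colon\Omega\to e^{i\theta}\Omega$, $R(w)=e^{i\theta}w$. For any holomorphic $g$ on $e^{i\theta}\Omega$, I claim $g\in H^p(e^{i\theta}\Omega)$ if and only if $g\circ R\in H^p(\Omega)$. Indeed, a positive function $u$ is a harmonic majorant of $|g|^p$ on $e^{i\theta}\Omega$ exactly when $u\circ R$ is a positive harmonic majorant of $|g\circ R|^p=|g|^p\circ R$ on $\Omega$, since composition with the affine map $R$ takes harmonic functions to harmonic functions and preserves pointwise inequalities. (Equivalently, if $\s\colon\D\to\Omega$ is a Riemann map then so is $e^{i\theta}\s\colon\D\to e^{i\theta}\Omega$, and $g\in H^p(e^{i\theta}\Omega)\iff g\circ(e^{i\theta}\s)\in H^p(\D)\iff (g\circ R)\circ\s\in H^p(\D)\iff g\circ R\in H^p(\Omega)$.)

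Applying this with $g(z)=e^{\mu z}$ gives $(g\circ R)(w)=e^{\mu e^{i\theta}w}=e^{(e^{i\theta}\mu)w}$, hence
\[
e^{\mu z}\in H^p(e^{i\theta}\Omega)\iff e^{(e^{i\theta}\mu)z}\in H^p(\Omega),
\]
which says precisely that $\mu\in\sigma_p(\tilde\Delta)$ if and only if $e^{i\theta}\mu\in\sigma_p(\Delta)$, i.e. $\mu\in e^{-i\theta}\sigma_p(\Delta)$. This finishes the argument.

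There is essentially no obstacle here: the only point requiring a little care is the well-definedness of $\tilde\Delta$ and the independence of the point spectrum from the (non-unique) Koenigs function, which is dealt with by phrasing everything through $H^p(\Omega)$. As an alternative to the harmonic-majorant step one could argue directly with Koenigs functions, observing that $e^{i\theta}h$ is a Koenigs function of the domain $e^{i\theta}\Omega$, so that by Siskakis' theorem $\mu\in\sigma_p(\tilde\Delta)\iff e^{\mu e^{i\theta}h}=e^{(e^{i\theta}\mu)h}\in H^p\iff e^{i\theta}\mu\in\sigma_p(\Delta)$; either route yields the claim.
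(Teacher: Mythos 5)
Your argument is correct and is essentially the paper's proof: the alternative you sketch at the end (rotate the Koenigs function, so $e^{\mu e^{i\theta}h}=e^{(e^{i\theta}\mu)h}$ and apply Siskakis' criterion) is exactly the published one-line argument, and your main route through $H^p(e^{i\theta}\Omega)$ and harmonic majorants is just an equivalent reformulation of the same idea. The extra remark on the well-definedness of $\tilde\Delta$ and independence of the choice of Koenigs function is a reasonable point of care that the paper leaves implicit.
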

	\begin{proof}
		Recall that $\lambda \in \sigma_p(\tilde{\Delta})$ if and only if $e^{\lambda e^{i\theta}h} \in H^p$, so $\lambda \in \sigma_p(\tilde{\Delta})$ if and only if $e^{i\theta}\lambda \in \sigma_p(\Delta),$ which yields the result.
	\end{proof}
	
	Before proceeding with the proof of Theorem \ref{espectro sector angular}, note that, by \eqref{expresion generador infinitesimal}, $0$ is always an eigenvalue for $\Delta$. 
	
	\medskip
	
	\noindent \textit{Proof of Theorem \ref{espectro sector angular}.} First, we prove $(i)$. Assume that $\al+\beta \leq \pi$. We can assume, without loss of generality, that $z_0=0$. Moreover, by Lemma \ref{lema giro}, we can assume that $\al=\beta.$ At this point, the result follows as a byproduct of \cite[Corollary 6.2]{Cowen} and \eqref{spectral mapping theorem point spectrum}.
	
	To prove $(ii)$, assume that $\al+\beta >\pi$. Clearly, in such a case, $\Omega$ contains two different rotations of $\C_+$. Part $(i)$ and Proposition \ref{contencion espectro puntual} yield the result. \hfill $\Box$
	
	\medskip
	As a byproduct of Theorem \ref{espectro sector angular}  and Proposition \ref{contencion espectro puntual}, we deduce the following result:
	
	\begin{corollary}\label{corolario contenciones sector angular}
		Let $(\Fi_t)_{t>0}$ be a parabolic semigroup on $\D$. \begin{enumerate}
		 \item [(i)] Assume that there exist $z_0,z_1 \in \C$ and $\al,\beta\in[0,\pi],$ $0<\al+\beta\leq \pi$ such that
			$$S_{z_0}(\al,\beta) \subseteq \Omega \subseteq S_{z_1}(\al,\beta).$$ Then,
			$$ \sigma_p(\Delta\mid_{H^p}) = \left\{re^{i\theta}: r\geq 0, \ \frac{\pi}{2}+\al \leq \theta \leq \frac{3\pi}{2}-\beta \right\}. $$
			\item [(ii)] Assume that there exists $z_0 \in \C$, $\al,\beta \in  [0,\pi]$, $\al+\beta >\pi$ such that $ S_{z_0}(\al,\beta) \subseteq \Omega.$ Then, 			$$\sigma_p(\Delta\mid_{H^p}) = \{0\}. $$
		\end{enumerate}
	\end{corollary}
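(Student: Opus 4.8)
The plan is to derive the corollary purely by combining Theorem~\ref{espectro sector angular} with the monotonicity of the point spectrum under inclusion of Koenigs domains, Proposition~\ref{contencion espectro puntual}. Before applying these, I would first record that every angular sector $S_q(\al,\beta)$ with $\al,\beta\in[0,\pi]$ and $\al+\beta>0$ is a \emph{parabolic} Koenigs domain: it is a Koenigs domain by the formula for its Koenigs function recalled above, and since the interval of directions $(-\al,\beta)$ is nonempty and open we may pick $\theta_0\in(-\al,\beta)$ with $\theta_0\neq 0$, so the ray $q+\{re^{i\theta_0}:r>0\}\subseteq S_q(\al,\beta)$ has unbounded imaginary part and $S_q(\al,\beta)$ is not contained in a horizontal strip. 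Together with the standing hypothesis that $\Omega$ is parabolic, this guarantees that both Theorem~\ref{espectro sector angular} and Proposition~\ref{contencion espectro puntual} are applicable to all the domains appearing in the statement.

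For part~(i), let $\Delta_0$ and $\Delta_1$ denote the infinitesimal generators of the $C_0$-semigroups on $H^p$ associated with $S_{p_0}(\al,\beta)$ and $S_{p_1}(\al,\beta)$, respectively. From $S_{p_0}(\al,\beta)\subseteq\Omega$ and Proposition~\ref{contencion espectro puntual} we get $\sigma_p(\Delta)\subseteq\sigma_p(\Delta_0)$, and from $\Omega\subseteq S_{p_1}(\al,\beta)$ and the same proposition we get $\sigma_p(\Delta_1)\subseteq\sigma_p(\Delta)$. Since $0<\al+\beta\leq\pi$, Theorem~\ref{espectro sector angular}(i) applies to both sectors and yields
$$
\sigma_p(\Delta_0)=\sigma_p(\Delta_1)=\left\{re^{i\theta}: r\geq 0, \ \frac{\pi}{2}+\al \leq \theta \leq \frac{3\pi}{2}-\beta \right\}.
$$
Hence $\sigma_p(\Delta)$ is squeezed between two copies of the same set, which gives the asserted equality.

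For part~(ii), let $\Delta_0$ be the generator of the $C_0$-semigroup associated with $S_{p_0}(\al,\beta)$. From $S_{p_0}(\al,\beta)\subseteq\Omega$, Proposition~\ref{contencion espectro puntual} gives $\sigma_p(\Delta)\subseteq\sigma_p(\Delta_0)$, and since $\al+\beta>\pi$, Theorem~\ref{espectro sector angular}(ii) gives $\sigma_p(\Delta_0)=\{0\}$; thus $\sigma_p(\Delta)\subseteq\{0\}$. The reverse inclusion is automatic: by \eqref{expresion generador infinitesimal}, $\Delta$ annihilates the constant functions of $H^p$, so $0\in\sigma_p(\Delta)$. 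Therefore $\sigma_p(\Delta)=\{0\}$.

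There is no real obstacle in this argument --- it is a direct consequence of the two cited results --- and the only point requiring a small remark is the verification that the sectors $S_{p_0}(\al,\beta)$ and $S_{p_1}(\al,\beta)$ are themselves parabolic Koenigs domains, so that the hypotheses of Theorem~\ref{espectro sector angular} and Proposition~\ref{contencion espectro puntual} are met; this is immediate, as noted above.
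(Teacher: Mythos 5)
Your argument is correct and is exactly the paper's intended deduction: the corollary is stated there as a direct byproduct of Theorem \ref{espectro sector angular} and Proposition \ref{contencion espectro puntual}, with the sandwich argument in (i), the containment plus the standing observation that $0\in\sigma_p(\Delta)$ (constants are annihilated by \eqref{expresion generador infinitesimal}) in (ii). Your extra check that angular sectors are parabolic Koenigs domains is a harmless filling-in of details the paper leaves implicit.
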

	Note that the point spectra described in Corollary \ref{corolario contenciones sector angular} consist exactly of those $\lambda \in \C$ such that $\lambda \Omega$ is contained in some left half-plane. As we will see, this property does not hold for every Koenigs domain $\Omega$, but at least, one of the two containments hold:
	
	\begin{proposition}\label{proposicion contenido semiplano}
		Let $(\Fi_t)_{t>0}$ be a parabolic semigroup on $\D$. Let $\lambda \in \C$ be such that $\lambda \Omega$ is contained in a left half-plane. Then, $e^{\lambda h} \in H^\infty$ and $\lambda \in \sigma_p(\Delta\mid_{H^p}).$ 
        \end{proposition}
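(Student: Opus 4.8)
The plan is to verify the three assertions in turn, each reducing to a statement about membership in $H^p(\Omega)$ of exponentials via Siskakis' criterion ($\lambda \in \esp$ iff $e^{\lambda h} \in H^p$, equivalently $e^{\lambda z} \in H^p(\Omega)$). First I would show that if $\lambda\Omega$ lies in a left half-plane, then $e^{\lambda h} \in H^\infty$. By hypothesis there is $c \in \R$ with $\Re(\lambda w) \le c$ for all $w \in \Omega$; since $h$ maps $\D$ into $\Omega$, we get $\Re(\lambda h(z)) \le c$ for all $z \in \D$, hence $|e^{\lambda h(z)}| = e^{\Re(\lambda h(z))} \le e^{c}$ on $\D$. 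Thus $e^{\lambda h}$ is a bounded analytic function on $\D$, i.e. $e^{\lambda h}\in H^\infty$.

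Next, the containment $H^\infty \subseteq H^p$ for every $1 \le p < \infty$ (indeed for all $0 < p < \infty$) is classical, so $e^{\lambda h} \in H^p$, and by Siskakis' characterization this is exactly $\lambda \in \sigma_p(\Delta\mid_{H^p})$. This settles the first two claims.

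For the last assertion, suppose additionally that $\mu \in \esp$, so $e^{\mu h} \in H^p$. I want to conclude $e^{(\lambda+\mu)h} \in H^p$. The key point is that $e^{(\lambda+\mu)h} = e^{\lambda h}\cdot e^{\mu h}$ is a product of an $H^\infty$ function and an $H^p$ function. The pointwise inequality $|e^{(\lambda+\mu)h(z)}| = |e^{\lambda h(z)}|\,|e^{\mu h(z)}| \le e^{c}\,|e^{\mu h(z)}|$ on $\D$ gives, for every $0 \le r < 1$,
\[
\frac{1}{2\pi}\int_0^{2\pi} |e^{(\lambda+\mu)h(re^{i\theta})}|^p\,d\theta \;\le\; e^{cp}\,\frac{1}{2\pi}\int_0^{2\pi} |e^{\mu h(re^{i\theta})}|^p\,d\theta \;\le\; e^{cp}\,\|e^{\mu h}\|_{H^p}^p,
\]
so taking the supremum over $r$ yields $\|e^{(\lambda+\mu)h}\|_{H^p} \le e^{c}\,\|e^{\mu h}\|_{H^p} < \infty$. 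Hence $e^{(\lambda+\mu)h} \in H^p$, i.e. $\lambda + \mu \in \esp$.

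I do not anticipate a genuine obstacle here: everything follows from the elementary fact that $H^p$ is a module over $H^\infty$ together with Siskakis' translation of the eigenvalue problem into membership of an exponential in $H^p$. The only thing to be careful about is the direction of the half-plane: ``left half-plane'' must be read as $\{\Re(\cdot) < c\}$ (or $\le c$), so that $\Re(\lambda w)$ is bounded \emph{above} on $\Omega$, which is what makes $|e^{\lambda h}|$ bounded; if one inadvertently took a right half-plane the exponential would blow up. One should also note, as the excerpt already does before the proof of Theorem \ref{espectro sector angular}, that $0 \in \esp$ always, which is consistent with taking $\lambda = 0$ (the trivial case where $\lambda\Omega = \{0\}$ is contained in any half-plane).
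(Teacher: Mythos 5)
Your argument is correct and follows essentially the same route as the paper: bound $\Re(\lambda h)$ above to get $e^{\lambda h}\in H^\infty\subset H^p$, then use the factorization $e^{(\lambda+\mu)h}=e^{\lambda h}e^{\mu h}$ together with the fact that $H^\infty$ functions multiply $H^p$ into itself (which the paper invokes as ``$e^{\lambda h}$ is a multiplier of $H^p$'' and you verify by the explicit integral estimate). No gaps; the only difference is that you spell out the multiplier step, which the paper leaves implicit.
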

	\begin{proof}
		Let $h:\D\rightarrow \Omega$ be the associated Koenigs map. If $\lambda\Omega$ is contained in a left half-plane, then there exists $\eta\in \R$ such that $\Re(\lambda h(z))< \eta$ for all $z\in\D$. Then,
		$$|e^{\lambda h(z)}| = e^{\Re(\lambda h(z))} \leq e^\eta$$ for all $z\in \D$, and $e^{\lambda h} \in H^\infty\subset H^p$. 
	\end{proof}
    The next result improves \cite[Lemma 4.3]{GD}:
    \begin{proposition}\label{proposicion convexidad}
        Let $(\Fi_t)_{t>0}$ be a parabolic semigroup on $\D$. Then, $\esp$ is convex.
    \end{proposition}
\begin{proof}
    Let $\lambda_1,\lambda_2\in\esp$. Take $\alpha\in[0,1]$ and consider the convex combination $\gamma_{\alpha}=\alpha\lambda_1+(1-\alpha)\lambda_2$. We show that $e^{\gamma_{\alpha}h}$ belongs to $H^p$, for every $\alpha\in[0,1]$. By Hölder's inequality with conjugated exponents $1/\alpha$ and $1/(1-\alpha)$, we have that, for $r\in[0,1)$,
    \begin{align*}
        \left(\int_0^{2\pi}|e^{(\alpha\lambda_1+(1-\alpha)\lambda_2)h(re^{i\theta})}|^pd\theta
        \right)^{1/p}
        &=
        \left(\int_0^{2\pi}
        e^{\alpha p\Re(\lambda_1h(re^{i\theta}))}
        e^{(1-\alpha) p\Re(\lambda_2h(re^{i\theta}))}
        d\theta
        \right)^{1/p}\\
        &
        \leq
        \left(
        \int_0^{2\pi}
        e^{p\Re(\lambda_1h(re^{i\theta}))}d\theta
        \right)^{\alpha/p}
        \left(
        \int_0^{2\pi}
        e^{p\Re(\lambda_2h(re^{i\theta}))}
        d\theta
        \right)^{(1-\alpha)/p}.
    \end{align*}
Taking the supremum on $r\in[0,1)$, the desired conclusion follows from both the definition of the $H^p$-norm and the assumption on $\lambda_1$ and $\lambda_2$.
\end{proof}

\subsection{Membership of $e^{\lambda h}$ to the Smirnov class $\mathcal{N}^+$}
We denote  $\log^+(x) = \max\{0,\log(x)\}$ for every $x>0$. A holomorphic function $f: \D\rightarrow \C$ belongs to the \textit{Nevanlinna class} $\mathcal{N}$ if
	$$ \sup\limits_{0<r<1} \frac{1}{2\pi} \int_0^{2\pi} \log^+|f(re^{i\theta})|d\theta < \infty. $$ It is well known that $H^p \subset \mathcal{N}$ for every $0<p<\infty$. Moreover, for every function $f \in \mathcal{N}$, its nontangential limit $f(e^{i\theta})$ exists almost everywhere \cite[Chapter 2]{Duren}.
	
	\medskip
	
	Let $f\in \mathcal{N}.$ We say that $f$ belongs to the \textit{Smirnov class} $\mathcal{N}^+$ if
	\begin{equation}\label{Smirnov}
	\lim\limits_{r\rightarrow 1^-} \frac{1}{2\pi} \int_0^{2\pi} \log^+|f(re^{i\theta})|d\theta = \frac{1}{2\pi} \int_0^{2\pi} \log^+|f(e^{i\theta})|d\theta.
	\end{equation}
	
	For functions in the Smirnov class, we will be able to study their membership to $H^p$ in terms of harmonic measure. Indeed, a function $f\in \mathcal{N^+}$ belongs to $H^p$ $(1\leq p < \infty)$ if and only if its nontangential limit belongs to $L^p(\T)$ \cite[Theorem 2.11]{Duren}. Thus, following \cite[p. 2256]{Betsakos} one has
	\begin{equation}\label{primera expresion medida armonica}
		\begin{split}
			\norm{f}^p_{H^p} & = \norm{f}^p_{L^p(\T)} = \frac{1}{2\pi}\int_0^{2\pi}|f(e^{i\theta})|^pd\theta
			\\& = \frac{p}{2\pi} \int_0^\infty r^{p-1}m(\{ \xi \in \T: |f(\xi)| > r\})dr
			\\ & = \frac{p}{2\pi} \int_0^\infty r^{p-1}\omega(0, \{ \xi \in \T: |f(\xi)| > r\}, \D) dr.
		\end{split}
	\end{equation}
	
	In order to apply \eqref{primera expresion medida armonica} to the functions $e^{\lambda h}$, we must provide sufficient conditions for them to belong to $\mathcal{N}^+:$
	
	\begin{proposition}\label{proposicion Smirnov}
		Let $h: \D\rightarrow \C$ be a univalent function. If $h \in H^1$, then $e^{\lambda h} \in \mathcal{N}^+$ for every $\lambda \in \C$. In particular, if $h(\D)$ is contained in $S_{z_0}(\a,\b)$ for some $z_0\in \C$, $\al,\beta\in [0,\pi],$ $0<\al+\beta<\pi$, then  $h\in H^1$ and $e^{\lambda h} \in \mathcal{N}^+$ for every $\lambda \in \C$.
	\end{proposition}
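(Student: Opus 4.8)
The plan is to verify membership in $\mathcal{N}^+$ by checking the Poisson inequality \eqref{desigualdad Poisson Smirnov} of Theorem \ref{teorema Garnett}, and then to deduce the second assertion from the first via a subordination argument.

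Assume first that $h\in H^1$ and fix $\lambda\in\C$. Since $\log^+|e^{\lambda h}|=\max\{0,\Re(\lambda h)\}\le|\lambda|\,|h|$, we get $\frac{1}{2\pi}\int_0^{2\pi}\log^+|e^{\lambda h(re^{i\theta})}|\,d\theta\le|\lambda|\,\norm{h}_{H^1}$ for all $r<1$, so $e^{\lambda h}\in\mathcal{N}$ and Theorem \ref{teorema Garnett} applies. To establish \eqref{desigualdad Poisson Smirnov} I would use that an $H^1$ function is the Poisson integral of its boundary values, i.e.\ $h(z)=\frac{1}{2\pi}\int_0^{2\pi}h(e^{i\theta})P_z(e^{i\theta})\,d\theta$ for every $z\in\D$ (see \cite{Duren}). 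Multiplying by $\lambda$ and taking real parts, which is legitimate because $P_z$ is real and nonnegative, gives $\Re(\lambda h(z))=\frac{1}{2\pi}\int_0^{2\pi}\Re(\lambda h(e^{i\theta}))\,P_z(e^{i\theta})\,d\theta$. Both $h$ (as an $H^1$ function) and $e^{\lambda h}$ (as an $\mathcal{N}$ function) have finite nontangential limits a.e.\ on $\T$, and wherever $h$ does, continuity of the exponential forces the boundary value of $e^{\lambda h}$ to equal $e^{\lambda h(e^{i\theta})}$; hence $\log|e^{\lambda h}(e^{i\theta})|=\Re(\lambda h(e^{i\theta}))$ a.e.\ and $\log|e^{\lambda h}(z)|=\Re(\lambda h(z))$. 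The identity above is then exactly \eqref{desigualdad Poisson Smirnov} (with equality), and Theorem \ref{teorema Garnett} yields $e^{\lambda h}\in\mathcal{N}^+$.

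For the ``in particular'' clause it suffices to prove $h\in H^1$, since the first part then applies. Set $\gamma=(\a+\b)/\pi$, so $0<\gamma<1$, and let $\Phi$ be the conformal map of $\D$ onto $S_{p_0}(\a,\b)$ with $\Phi(0)=h(0)$; this is the explicit map recalled in Section \ref{seccion sector angular} post-composed with a disc automorphism. The explicit map has the form $p_0+e^{i(\b-\a)/2}\big(\frac{1+z}{1-z}\big)^{\gamma}$, and $\big(\frac{1+z}{1-z}\big)^{\gamma}$ maps $\D$ into the right half-plane (as $\gamma<1$), hence lies in $\mathcal{N}^+$, while its boundary modulus $|\cot(\theta/2)|^{\gamma}$ is in $L^1(\T)$ because $\gamma<1$; the characterization of $H^p\cap\mathcal{N}^+$ recalled just before the statement then gives that this map, and therefore $\Phi$ (using the invariance of $H^1$ under disc automorphisms), belongs to $H^1$. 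Since $h(\D)\subseteq S_{p_0}(\a,\b)=\Phi(\D)$, the map $\omega:=\Phi^{-1}\circ h$ is a holomorphic self-map of $\D$ with $\omega(0)=0$ and $h=\Phi\circ\omega$, so Littlewood's subordination principle (see \cite{Duren}) gives $\norm{h}_{H^1}\le\norm{\Phi}_{H^1}<\infty$.

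I do not expect a genuine obstacle here; the point of the argument is that the hypothesis $h\in H^1$ is precisely what makes the Poisson representation of $h$ available — recall that $\exp\big(\frac{1+z}{1-z}\big)\notin\mathcal{N}^+$ exactly because $\frac{1+z}{1-z}\notin H^1$ — with everything else being standard ($H^1$ functions as Poisson integrals, $H^p$-membership of fractional powers of $\frac{1+z}{1-z}$, and Littlewood subordination). The only steps deserving a careful line are the a.e.\ identification of the boundary values of $e^{\lambda h}$ with $e^{\lambda h(e^{i\theta})}$ and the normalization $\Phi(0)=h(0)$ used to make the subordination exact.
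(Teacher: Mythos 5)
Your argument is correct and follows essentially the same route as the paper: the $|\lambda|\,\norm{h}_{H^1}$ bound gives $e^{\lambda h}\in\mathcal{N}$, the Poisson representation of the $H^1$ function $h$ verifies the Smirnov criterion of Theorem \ref{teorema Garnett}, and the sector case reduces to $h\in H^1$ via the explicit conformal map onto the sector together with subordination. The only (harmless) cosmetic differences are that the paper invokes boundedness of the composition operator $C_{\tilde h^{-1}\circ h}$ on $H^1$ instead of normalizing $\Phi(0)=h(0)$ for Littlewood's principle, and cites directly that $\left(\frac{1+z}{1-z}\right)^{\gamma}\in H^1$ for $\gamma<1$ rather than checking the boundary modulus; your explicit a.e.\ identification of the boundary values of $e^{\lambda h}$ is a detail the paper leaves implicit.
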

    
    \begin{proof}
    We begin by proving $e^{\lambda h} \in \mathcal{N}^+$ for every $\lambda \in \C$.
    For this purpose, let us see first that $e^{\lambda h}\in\mathcal{N}$. We have, for all $0<r<1,$
		\begin{align}\label{estimacionTCD}
		    \int_{0}^{2\pi} \log^+|e^{\lambda h(re^{i\theta})}| d\theta \leq \int_0^{2\pi} |\Re(\lambda h(re^{i\theta}))|d\theta &\leq |\lambda| \int_0^{2\pi} | h(re^{i\theta})| d\theta\notag \\&\leq 2\pi |\lambda|\norm{h}_{H^1} < \infty,
		\end{align} 
        giving the membership of $e^{\lambda h}$ to $\mathcal{N}$. Now, since $h\in H^1$, we have that the radial limits of $\log^+|e^{\lambda h(re^{i\theta})}|$ exist for a.e. $e^{i\theta}$. This, together with estimate \eqref{estimacionTCD} gives, by the Dominated Convergence Theorem, condition \eqref{Smirnov}. This proves the first part of the result. For the second one, assume that $h(\D)$ is contained in $S_{z_0}(\a,\b)$. We may assume, without loss of generality, that $z_0 = 0$. Consider $$\Tilde{h}(z) = e^{i\frac{\beta-\al}{2}}\left( \frac{1+z}{1-z}\right)^{\frac{\a+\b}{\pi}} \qquad (z\in \D).$$ It is straightforward to check that $\Tilde{h}$ is a univalent function mapping $\D$ onto $S(\a,\b).$ It is well known that the map $$z\in \D \mapsto \left( \frac{1+z}{1-z}\right)^\lambda$$ belongs to $H^1$ if and only if $|\Re(\lambda)|<1$, so $\Tilde{h}\in H^1$. Finally, let $\psi = \Tilde{h}^{-1}\circ h : \D\rightarrow \D$. By the Littlewood Subordination Principle, we have that $h\in H^1$, concluding the proof.
    \end{proof}

	\begin{corollary}\label{corolario formula norma}
		Let $(\Fi_t)_{t>0}$ be a parabolic semigroup on $\D$. Assume that $\Omega$ is contained in $S_{z_0}(\al,\beta)$ for some $z_0\in \C$, $\al,\beta\in[0,\pi],$ $0<\al+\beta<\pi$, and let $\lambda =|\lambda|e^{i\sigma}\in \C.$ Then
		\begin{equation}\label{expresion norma medida armonica}
			\norm{e^{\lambda h}}^p_{H^p} = \frac{p|\lambda|}{2\pi}\int_{-\infty}^\infty e^{p|\lambda|t} \omega(0,\{ \xi \in \partial\Omega_\sigma: \Re(\xi) >t\}, \Omega_\sigma)dt,
		\end{equation}  where $\Omega_\sigma = e^{i\sigma}\Omega.$
	\end{corollary}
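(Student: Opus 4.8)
The plan is to combine the integral formula \eqref{primera expresion medida armonica} with a change of variables and the conformal invariance of harmonic measure. Since $\Omega\subseteq S_{p_0}(\al,\beta)$ with $0<\al+\beta<\pi$, Proposition~\ref{proposicion Smirnov} gives $h\in H^1$ and $e^{\lambda h}\in\mathcal N^{+}$ for every $\lambda\in\C$; thus \eqref{primera expresion medida armonica} may be applied to $f=e^{\lambda h}$ as an identity in $[0,+\infty]$, so we need not know beforehand whether $e^{\lambda h}\in H^p$. We may assume $\lambda\neq 0$ (the substitution below requires $|\lambda|>0$) and, normalizing the Koenigs function so that $h(0)=0$, we set $\lambda=|\lambda|e^{i\sigma}$ and $g:=e^{i\sigma}h$, a univalent map of $\D$ onto $\Omega_\sigma=e^{i\sigma}\Omega$ with $g(0)=0$.

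Since $h\in H^1\subset\mathcal N$, its nontangential boundary values exist and are finite a.e.\ on $\T$, so for a.e.\ $\xi\in\T$ one has $|e^{\lambda h}(\xi)|=e^{\Re(\lambda h(\xi))}=e^{|\lambda|\Re(g(\xi))}$; hence, up to a set of zero length (and thus zero harmonic measure), $\{\xi\in\T:|e^{\lambda h}(\xi)|>r\}=\{\xi\in\T:\Re(g(\xi))>\tfrac1{|\lambda|}\log r\}$. Inserting this into \eqref{primera expresion medida armonica} and performing the substitution $r=e^{|\lambda|t}$ converts the integral over $(0,\infty)$ into an integral over $\R$ against the weight $e^{p|\lambda|t}\,dt$, with the superlevel sets becoming precisely $\{\xi\in\T:\Re(g(\xi))>t\}$. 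Thus it remains to prove
$$\omega\bigl(0,\{\xi\in\T:\Re(g(\xi))>t\},\D\bigr)=\omega\bigl(0,\{\xi\in\partial\Omega_\sigma:\Re(\xi)>t\},\Omega_\sigma\bigr)\qquad(t\in\R),$$
after which \eqref{expresion norma medida armonica} follows by collecting constants.

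For this last identity I would invoke the conformal invariance of harmonic measure together with Fatou's theorem: the nontangential boundary map $g^{*}$ of the conformal map $g$ is defined a.e.\ on $\T$, takes values in $\partial\Omega_\sigma$ a.e., and transports $\omega(0,\cdot,\D)$ to $\omega(g(0),\cdot,\Omega_\sigma)=\omega(0,\cdot,\Omega_\sigma)$; equivalently $\omega(0,A,\Omega_\sigma)=\omega\bigl(0,(g^{*})^{-1}(A),\D\bigr)$ for every Borel $A\subseteq\partial\Omega_\sigma$. Applying this with $A=\{\xi\in\partial\Omega_\sigma:\Re(\xi)>t\}$ and observing that $(g^{*})^{-1}(A)=\{\xi\in\T:\Re(g^{*}(\xi))>t\}$ modulo a null set (since $g^{*}(\xi)\in\partial\Omega_\sigma$ and is finite for a.e.\ $\xi$) gives the claim. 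The only genuinely delicate point is this matching of the superlevel set on $\T$ with the one on $\partial\Omega_\sigma$ up to harmonic-measure-null sets, for which one uses the a.e.\ existence, finiteness, and boundary location of the nontangential limits of $g$ — all consequences of $h\in H^1$; everything else is the substitution $r=e^{|\lambda|t}$ and routine bookkeeping.
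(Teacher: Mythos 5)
Your proposal is correct and follows essentially the same route as the paper: Proposition~\ref{proposicion Smirnov} to place $e^{\lambda h}$ in $\mathcal{N}^+$, the layer-cake identity \eqref{primera expresion medida armonica}, the substitution $r=e^{|\lambda|t}$, and conformal invariance of harmonic measure via the boundary values of $e^{i\sigma}h$. The extra care you take (excluding $\lambda=0$, normalizing $h(0)=0$, and matching superlevel sets up to null sets) only makes explicit what the paper's proof leaves implicit.
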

	\begin{proof}
		First, recall that $e^{\lambda h}\in \mathcal{N}^+$ by Proposition \ref{proposicion Smirnov}, so by \eqref{primera expresion medida armonica}
		\begin{equation*}
			\begin{split}
				\norm{e^{\lambda h}}^p_{H^p} & = \frac{p}{2\pi} \int_0^\infty r^{p-1}\omega(0, \{ \xi \in \partial\D: |e^{\lambda h(\xi)}| > r\}, \D) dr
				\\ & = \frac{p}{2\pi} \int_0^\infty r^{p-1}\omega(0, \{ \xi \in \partial\D: \Re(\lambda h(\xi)) > \log r\}, \D) dr
				\\ & =  \frac{p}{2\pi} \int_0^\infty r^{p-1}\omega(0, \{ \xi \in \partial\Omega_\sigma: \Re(\xi) > \frac{\log r}{|\lambda|}\}, \Omega_\sigma) dr
				\\ & = \frac{p|\lambda|}{2\pi}\int_{-\infty}^\infty e^{p|\lambda|t} \omega(0,\{ \xi \in \partial\Omega_\sigma: \Re(\xi) >t\}, \Omega_\sigma)dt,
			\end{split}
		\end{equation*}
		where we have applied the conformal invariance of the harmonic measure  with the univalent map $e^{i\sigma} h : \D \rightarrow \Omega_\sigma$ in the third equality and the change of variable $\frac{\log r}{|\lambda|}=t$ in the fourth one. This finishes the proof.
	\end{proof}
    
\subsection{Computations of harmonic measure}
Let us introduce some notation. Given $z_0 \in \C$, $r_0>0$, and $\theta \in [-\pi,\pi]$, we define the half-lines 
	\begin{equation}\label{notacionlados}
	    \ell_{z_0,r_0}(\theta):=z_0 +\{ re^{i\theta}: r\geq r_0\}.
	\end{equation}
	If $r_0=0$, we simply write $\ell_{z_0}(\theta)$ and $\ell(\theta):=\ell_0(\theta)$.

    \ 
    We now prove some technical lemmas involving harmonic measure, which will be needed later in the discussion.
    \begin{proposition}\label{lematecnico}
		Let $\alpha\in[0,\pi/2)$ and $\beta\in[\pi/2,\pi)$ such that $\alpha+\beta\leq\pi$. Consider the sets $\widetilde{S}(\alpha,\beta)=\{z\in\C: \ \alpha<\emph{arg}(z)<\beta\}$ and
		$$
		E_t:=\{w\in\partial \widetilde{S}(\alpha,\beta): \ \emph{Re}(w)>t\},\quad t>1.
		$$
		Define $z_t:=(\partial \widetilde{S}(\alpha,\beta))\cap\{\emph{Re}(z)=t\}$ and $\varphi:=\beta-\al$. Then, if $z=|z|e^{i\theta_0}\in \widetilde{S}(\alpha,\beta)$, 
		\begin{equation}\label{pgeneral}
			\omega(z,E_t, \widetilde{S}(\alpha,\beta))=\frac{1}{2\pi}\arctan\left(\frac{2T}{T^2-1}\right),\quad \text{with} \quad 
			T=\frac{|z_t|^{\pi/\varphi}-|z|^{\pi/\varphi}\cos(\kappa)}{|z|^{\pi/\varphi}\sin(\kappa)},
		\end{equation}
		where $\kappa=(\theta_0-\alpha)\frac\pi\varphi$, for all $T>1$. In particular, if $\theta=(\alpha+\beta)/2$ and $z= re^{i\theta}$, $r>0$,
		\begin{equation}\label{pbisec}
			\omega(z,E_t,\widetilde{S}(\alpha,\beta))        
			=
			\frac{1}{2\pi}\arctan\left(
			\frac{2(|z_t|r)^{\pi/\varphi}}{|z_t|^{2\pi/\varphi}-r^{2\pi/\varphi}}
			\right)
		\end{equation} 
	for all $r>0$ such that $r<|z_t|$.
	\end{proposition}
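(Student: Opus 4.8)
The plan is to reduce the computation to a classical, explicit formula for the harmonic measure of a boundary arc of an angular sector by means of a sequence of conformal maps. First I would normalize the situation: the set $\widetilde{S}(\alpha,\beta)$ is an infinite sector of opening $\varphi=\beta-\alpha$ with vertex at the origin, one edge along $\ell(\alpha)$ and the other along $\ell(\beta)$. The target set $E_t$ consists of the two sub-half-lines of $\partial\widetilde{S}(\alpha,\beta)$ lying to the right of the vertical line $\{\mathrm{Re}(z)=t\}$; the two endpoints of $E_t$ are the points $z_t$ and its reflection-type counterpart on the other edge, both at the same real part $t$. The key observation is that the power map $z\mapsto z^{\pi/\varphi}$ (taking arguments in $(\alpha,\beta)$, suitably rotated so the sector becomes the upper half-plane) is a conformal bijection from $\widetilde{S}(\alpha,\beta)$ onto $\Ha=\{\mathrm{Im}>0\}$, and it sends $E_t$ onto the complement in $\R$ of an interval $(-b,b)$ or more precisely onto two real half-lines $(-\infty,-a]\cup[b,+\infty)$ for suitable $a,b>0$ determined by the images of $z_t$ and its companion endpoint. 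By conformal invariance of harmonic measure (Theorem 4.3.8 of \cite{Ran}, cited in the excerpt), $\omega(z,E_t,\widetilde{S}(\alpha,\beta))$ equals $\omega(w,F,\Ha)$ where $w$ is the image of $z$ and $F$ is that union of two real half-lines.

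Second, I would compute $\omega(w,F,\Ha)$ explicitly. Writing $w=u+iv$ with $v>0$, the harmonic measure in the upper half-plane of a single ray $[b,+\infty)$ is $\tfrac1\pi\big(\tfrac\pi2-\arg(w-b)\big)$-type expressions; equivalently, harmonic measure in $\Ha$ of a subset of $\R$ is $\tfrac1\pi$ times the visual angle subtended at $w$. So $\omega(w,F,\Ha)=\tfrac1\pi\theta$ where $\theta$ is the total angle at $w$ subtended by the two rays. After translating/scaling the half-plane so that the two endpoints become $\pm 1$ (which is legitimate since a real affine map preserves $\Ha$ and $\R$), the two rays are $(-\infty,-1]\cup[1,+\infty)$, and the subtended angle at a point $w$ is $\pi$ minus the angle subtended by the segment $[-1,1]$; a direct computation with the $\arctan$ addition formula gives $\omega(w,F,\Ha)=\tfrac{1}{2\pi}\arctan\!\big(\tfrac{2T}{T^{2}-1}\big)$ where $T$ is (up to the normalization) the "position" of $w$ — one reads off $T=(|z_t|^{\pi/\varphi}-|z|^{\pi/\varphi}\cos\kappa)/(|z|^{\pi/\varphi}\sin\kappa)$ by tracking how $z$, $z_t$ and the vertex map under $z\mapsto z^{\pi/\varphi}$ and then under the affine normalization. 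Here the restriction $T>1$ corresponds exactly to $z$ lying in the region where this $\arctan$ branch is the correct one (the point $w$ is "outside" the normalized segment); I would note that for $T\le 1$ one has instead $\tfrac1{2}-\tfrac{1}{2\pi}\arctan(\cdots)$, but since the statement only claims the formula for $T>1$ this is all that is needed.

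Third, for the bisector specialization I would simply plug $\theta_0=(\alpha+\beta)/2$ into the general formula. Then $\kappa=(\theta_0-\alpha)\pi/\varphi=\tfrac\pi2$, so $\cos\kappa=0$ and $\sin\kappa=1$, giving $T=|z_t|^{\pi/\varphi}/r^{\pi/\varphi}$ with $z=re^{i\theta}$. Substituting into $\tfrac{2T}{T^2-1}$ and clearing denominators by multiplying numerator and denominator by $r^{2\pi/\varphi}$ yields
\[
\frac{2T}{T^{2}-1}=\frac{2\,(|z_t|\,r)^{\pi/\varphi}}{|z_t|^{2\pi/\varphi}-r^{2\pi/\varphi}},
\]
which is exactly \eqref{pbisec}; the hypothesis $r<|z_t|$ is precisely the translation of $T>1$ in this case, so the specialization is immediate and valid in the stated range.

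**Main obstacle.** The only genuinely delicate point is bookkeeping the composition of conformal maps carefully enough to see that the abstract "position parameter" of the image point in the normalized half-plane is exactly the claimed $T$, including getting the cosine/sine of $\kappa$ in the right places — this requires writing $z=|z|e^{i\theta_0}$, applying $z\mapsto e^{-i\alpha}z$ to rotate the sector to $\{0<\arg<\varphi\}$, then $w\mapsto w^{\pi/\varphi}$ to open it to $\Ha$, then an affine map of $\R$ sending the two images of the endpoints to $\pm 1$, and checking that the two endpoints of $E_t$ do map to a symmetric pair (which uses that they share the same real part $t$ and the geometry of the sector). Everything else is either a citation (conformal invariance of harmonic measure) or the standard half-plane harmonic-measure-as-visual-angle formula together with the elementary $\arctan$ identity $\arg(w-(-1))-\arg(w-1)$ worked out via $\tan(A-B)$.
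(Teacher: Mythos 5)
Your argument is built on a misreading of the geometry of $E_t$, and this is where it breaks down. Under the stated hypotheses $\alpha\in[0,\pi/2)$, $\beta\in[\pi/2,\pi)$ and $t>1$, every point of the upper edge $\ell(\beta)$ of $\widetilde{S}(\alpha,\beta)$ has real part $\leq 0$, so $\ell(\beta)$ never meets $\{\Re(z)>t\}$: the set $E_t$ is a \emph{single} half-line contained in the lower edge $\ell(\alpha)$, namely $E_t=\{w\in\ell(\alpha):\ |w|>t/\cos\alpha\}$, and $z_t$ is a single point (which is why $|z_t|$ is well defined in the formula). You instead take $E_t$ to be two rays, one on each edge, with ``endpoints $z_t$ and its reflection-type counterpart''. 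This is not a harmless bookkeeping issue. First, the symmetric normalization you rely on does not exist: even in a configuration where both edges met $\{\Re>t\}$, the two endpoints would have moduli $t/\cos\alpha$ and $t/|\cos\beta|$, so their images under the power map are not a symmetric pair in general. Second, and decisively, the harmonic measure of the two-ray set $(-\infty,-1]\cup[1,+\infty)$ in $\Ha$ at a point $w$ is $1-\tfrac1\pi(\text{angle subtended by }[-1,1]\text{ at }w)$; at $w=iv$ this equals $\tfrac2\pi\arctan v$, whereas the claimed right-hand side $\tfrac1{2\pi}\arctan\bigl(\tfrac{2v}{v^2-1}\bigr)$ equals $\tfrac1\pi\arctan(1/v)$ — one tends to $1$ and the other to $0$ as $v\to\infty$. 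So the ``direct computation'' you assert at the end cannot produce \eqref{pgeneral}; carried out honestly, your setup yields the harmonic measure of the wrong (much larger) boundary set.

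The correct route, which is the one the paper takes, uses the hypotheses on $\alpha,\beta,t$ precisely to reduce to one ray: write $E_t=\{w\in\ell(\alpha):\ |w|>|z_t|\}$ with $|z_t|=t/\cos\alpha$, map by $F(z)=(e^{-i\alpha}z)^{\pi/\varphi}$, which sends $\widetilde{S}(\alpha,\beta)$ onto $\Ha$ and $E_t$ onto the single real half-line $\{x>|z_t|^{\pi/\varphi}\}$, then apply the affine self-map of $\Ha$ taking $F(z)=|z|^{\pi/\varphi}e^{i\kappa}$ to $i$; this carries the half-line to $\{x>T\}$ with exactly the stated $T=\bigl(|z_t|^{\pi/\varphi}-|z|^{\pi/\varphi}\cos\kappa\bigr)/\bigl(|z|^{\pi/\varphi}\sin\kappa\bigr)$, and $\omega\bigl(i,(T,+\infty),\Ha\bigr)=\tfrac1\pi\arctan(1/T)=\tfrac1{2\pi}\arctan\bigl(\tfrac{2T}{T^2-1}\bigr)$, the last equality being the double-angle identity, valid on the principal branch precisely because $T>1$. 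Your bisector specialization ($\kappa=\pi/2$, $T=(|z_t|/r)^{\pi/\varphi}$, with $r<|z_t|$ giving $T>1$) is correct once the general formula is available, but the main step of your proposal computes the harmonic measure of a set that is not $E_t$.
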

	\begin{proof}
		Consider the Riemann map $F:\widetilde{S}(\alpha,\beta)\to \Ha $ given by $F(z)=(e^{-i\alpha}z)^{\pi/\varphi}$. By definition, $z_t=t+i\psi(t)$, for some $\psi(t)>0$. Then, we can rewrite the set $E_t$ as    
		\[
		E_t=\left\{z\in \partial \widetilde{S}(\alpha,\beta): \ |z|>\frac{t}{\cos(\alpha)}, \ \Re(z)>0\right\}\cdot
		\]
		Therefore,
		\[
		F_t:=F(E_t)=\left\{x\in\R: x>\left(\frac{t}{\cos(\alpha)}\right)^{\pi/\varphi}\right\}\cdot
		\]
		Now, since $\alpha=\arctan(\psi(t)/t)$, we have that $$\cos(\alpha)=\cos(\arctan(\psi(t)/t))=\frac{1}{\sqrt{\frac{\psi(t)^2}{t^2}+1}}=\frac{t}{|z_t|}\cdot$$
		Hence,
		$
		F_t=\left\{x\in\R: x>|z_t|^{\pi/\varphi}\right\}
		$. On the other hand, we have for $z=|z|e^{i\theta_0}\in \widetilde{S}(\alpha,\beta)$, letting $\kappa=(\theta_0-\alpha)\frac\pi\varphi$, 
		$$F(z)=e^{i(\theta_0-\alpha)\pi/\varphi}|z|^{\pi/\varphi}=|z|^{\pi/\varphi}(\cos(\kappa)+i\sin(\kappa)).$$ 
		Then, if
		\[
		G_t:=\left\{ 
		x\in\R:x>T
		\right\},\quad\text{with } T=\frac{|z_t|^{\pi/\varphi}-|z|^{\pi/\varphi}\cos(\kappa)}{|z|^{\pi/\varphi}\sin(\kappa)},
		\]
		by the conformal invariance of harmonic measure, we have by \cite[Example 7.2.5]{BCD}
		\begin{align*}
			\omega(z,E_t,\widetilde{S}(\alpha,\beta))
			=\omega(F(z),F_t,\Ha)=
			\omega(i,G_t,\Ha )
			=
			\frac{1}{2\pi}\arctan\left(\frac{2T}{T^2-1}
			\right),
		\end{align*}
		and \eqref{pgeneral} follows. 
		
		\medskip
        
		Take now $z=|z|e^{i\theta_0}\in\ell(\theta)$. Then, $\kappa=\pi/2$, so $F(z)=|z|^{\pi/\varphi}e^{i\kappa}
		=i|z|^{\pi/\varphi}$. Observe also that, $T=(|z_t|/|z|)^{\pi/\varphi}$. Plugging this in \eqref{pgeneral}, we obtain \eqref{pbisec}.
	\end{proof}
	In the course of the proofs to come, we will need to apply the previous result for specific rotations of the angular sectors $S_{z_0}(\alpha,\beta)$. Because of this, in the next result we adapt Proposition \ref{lematecnico} to this setting.
	\begin{corollary}\label{calculo medida armonica}
		Let $\psi\in[\pi/2-\beta,\pi/2+\al]$, $\al,\beta \in [0,\pi]$ with $0<\al+\beta\leq\pi$, define $R(\psi)=e^{i\psi}S(\alpha,\beta)$ and 
		$$
		E_t:=\{w\in\partial R(\psi): \ \emph{Re}(w)>t\},\quad t>1.
		$$
		Set $z_t:=\partial R(\psi)\cap\{\emph{Re}(z)=t\}$ and $\varphi:=\al+\beta$. Then, if $z=|z|e^{i\theta_0}\in R(\psi)$, 
		\[
		\omega(z,E_t,R(\psi))=\frac{1}{2\pi}\arctan\left(\frac{2T}{T^2-1}\right),
		\]
		where
		\[
		T=\cot(\kappa)\left( \frac{1}{\cos(\kappa)}\left(\frac{|z_{t}|}{|z|}\right)^{\frac{\pi}{\varphi}}
		-1
		\right),
		\]
		 and $\kappa=(\theta_0+\alpha-\psi)\frac\pi\varphi$, for all $T>1.$ In particular, if $\theta=\psi+(\alpha+\beta)/2$ and $z =re^{i\theta}$, $r>0$,
		\begin{equation}\label{lemabisec}
			\omega(z,E_t, R(\psi))        
			=
			\frac{1}{2\pi}\arctan\left(
			\frac{2(|z_{t}|r)^{\pi/\varphi}}{|z_{t}|^{2\pi/\varphi}-r^{2\pi/\varphi}}
			\right),
		\end{equation} 
	for all $r<|z_t|$.
	\end{corollary}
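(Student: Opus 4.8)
The plan is to re-run the argument of Proposition~\ref{lematecnico}, now with the rotated sector $R(\psi)=e^{i\psi}S(\al,\beta)$ in place of $\widetilde{S}(\al,\beta)$; its boundary is the union of the two half-lines $\ell_0(\psi-\al)$ and $\ell_0(\psi+\beta)$, and its opening angle is $\varphi=\al+\beta$. The first step is to read off what the hypothesis $\psi\in[\pi/2-\beta,\pi/2+\al]$ contributes: combined with $\al+\beta\leq\pi$ it forces $\psi-\al\in(-\pi/2,\pi/2]$ and $\psi+\beta\in[\pi/2,3\pi/2]$. Hence, for every $t>1$, the ray $\ell_0(\psi+\beta)$ is contained in $\{\Re\leq 0\}$ and contributes nothing to $E_t$, whereas the vertical line $\{\Re=t\}$ cuts $\partial R(\psi)$ in the single point $z_t\in\ell_0(\psi-\al)$, with $|z_t|=t/\cos(\psi-\al)$; consequently $E_t$ is exactly the sub-half-line of $\ell_0(\psi-\al)$ beyond $z_t$.

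Next I would transport everything to $\Ha$ by the Riemann map $F(z)=\bigl(e^{-i(\psi-\al)}z\bigr)^{\pi/\varphi}$, which carries $\ell_0(\psi-\al)$ onto $[0,+\infty)$. This yields $F(z_t)=|z_t|^{\pi/\varphi}$, $F(E_t)=\bigl(|z_t|^{\pi/\varphi},+\infty\bigr)$, and, for $z=|z|e^{i\theta_0}\in R(\psi)$, $F(z)=|z|^{\pi/\varphi}e^{i\kappa}$ with $\kappa=(\theta_0+\al-\psi)\pi/\varphi$. By conformal invariance of harmonic measure, $\omega(z,E_t,R(\psi))=\omega\bigl(F(z),F(E_t),\Ha\bigr)$, and composing with the real affine self-map $w\mapsto (w-\Re F(z))/\Im F(z)$ of $\Ha$ turns the right-hand side into $\omega(i,(T,+\infty),\Ha)$, where $T=\dfrac{|z_t|^{\pi/\varphi}-|z|^{\pi/\varphi}\cos\kappa}{|z|^{\pi/\varphi}\sin\kappa}$; by \cite[Example 7.2.5]{BCD} this last quantity equals $\frac{1}{2\pi}\arctan\!\bigl(\tfrac{2T}{T^{2}-1}\bigr)$ for $T>1$, exactly as in Proposition~\ref{lematecnico}.

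It then only remains to put $T$ in the announced form, which is pure algebra: dividing numerator and denominator by $|z|^{\pi/\varphi}\cos\kappa$ gives $T=\cot\kappa\bigl(\tfrac{1}{\cos\kappa}(|z_t|/|z|)^{\pi/\varphi}-1\bigr)$. When $z$ lies on the bisecting ray of $R(\psi)$ one has $\theta_0+\al-\psi=\varphi/2$, hence $\kappa=\pi/2$, so $\cos\kappa=0$, $\sin\kappa=1$, and $T=(|z_t|/|z|)^{\pi/\varphi}$; writing $|z|=r$ and simplifying $\tfrac{2T}{T^{2}-1}$ produces \eqref{lemabisec}, which is valid precisely for $T>1$, i.e. for $r<|z_t|$.

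I do not expect a genuine obstacle here: the corollary is Proposition~\ref{lematecnico} read through the rotation by $e^{i\psi}$, and the only points that need care are (i) checking that the two-sided bound on $\psi$ is exactly what keeps $E_t$ on the single ray $\ell_0(\psi-\al)$ and pins down $z_t$, and (ii) keeping track of the argument $\kappa$ through the rotation built into $F$. The one mildly delicate issue is the degenerate configurations $\psi-\al=\pm\pi/2$, where $R(\psi)$ is a vertical half-plane, $E_t=\emptyset$, and the harmonic measure vanishes; these are harmless and in any case are excluded by the restriction $T>1$.
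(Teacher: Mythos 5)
Your proposal is correct and follows the paper's route: the paper's proof is simply ``apply Proposition~\ref{lematecnico} to $R(\psi)$, with $\kappa=(\theta_0+\alpha-\psi)\frac{\pi}{\varphi}$,'' and you have just written out that application in detail (the conformal map $F(z)=(e^{-i(\psi-\al)}z)^{\pi/\varphi}$, the location of $E_t$ and $z_t$ guaranteed by $\psi\in[\pi/2-\beta,\pi/2+\al]$, and the algebra converting $T$ to the stated form and to the bisector case $\kappa=\pi/2$). Your identification of the bisecting ray via $\theta_0+\al-\psi=\varphi/2$ is the intended reading of the corollary's ``in particular'' clause, so no gap remains.
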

	\begin{proof}
	It is enough to apply Proposition \ref{lematecnico} to the sector $R(\psi)$, given \eqref{pgeneral} with $\kappa=(\theta_0+\alpha-\psi)\frac{\pi}{\varphi}\cdot$	\end{proof}
	
	We will also need to apply Proposition \ref{lematecnico} in slightly different situations regarding the boundary set considered. For the sake of simplifying the upcoming proofs, we include the following result:
	
	\begin{corollary}\label{corolario medida armonica imaginaria}
		Let $\Fi \in (0,\pi]$, consider $\widetilde{S}= e^{i\pi/2}S(0,\Fi)$ and
		$$E_t:= \{ w\in \partial \widetilde{S}: \PR(w) = 0, \ \Impart(w) > t \}, \quad t>1.$$ Then, if $z=|z|e^{i\theta_0}\in \widetilde{S}$
		$$
		\omega(z,E_t,\widetilde{S}) = \frac{1}{2\pi}\arctan\left(\frac{2T}{T^2-1}\right), \qquad \textnormal{with } T=\frac{t^{\frac{\pi}{\Fi}}-|z|^{\frac{\pi}{\Fi}}\cos(\kappa)}{|z|^{{\frac{\pi}{\Fi}}}\sin(\kappa)},
		$$
		 
			where $\kappa = (\theta_0-\frac{\pi}{2})\frac{\pi}{\Fi}$, for all $T>1$.
	\end{corollary}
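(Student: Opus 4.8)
The plan is to reduce Corollary~\ref{corolario medida armonica imaginaria} to Proposition~\ref{lematecnico} by identifying the relevant rotated sector and the relevant boundary set. The set $\widetilde S = e^{i\pi/2}S(0,\Fi)$ is exactly the sector $\{z\in\C:\ \pi/2<\arg(z)<\pi/2+\Fi\}$, which is of the form $\widetilde S(\alpha,\beta)$ with $\alpha=\pi/2$ and $\beta=\pi/2+\Fi$; note that here $\varphi=\beta-\alpha=\Fi$ in the notation of Proposition~\ref{lematecnico}. The one subtlety is that Proposition~\ref{lematecnico} was stated under the hypotheses $\alpha\in[0,\pi/2)$, $\beta\in[\pi/2,\pi)$, $\alpha+\beta\le\pi$, so it does not literally apply; however, the only place those hypotheses were used was to guarantee that one boundary ray of the sector meets every vertical line $\{\Re(z)=t\}$ (the ray $\ell(\alpha)$), while the other boundary ray does not. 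In the present configuration the situation is mirrored: the ray $\ell(\pi/2)$ is the positive imaginary axis, which meets $\{\Re(w)=0\}$ along the half-line $\Im(w)>0$ — and this is precisely the set $E_t$ considered here (with the threshold on $\Im(w)$ rather than on $\Re(w)$, reflecting that this boundary ray is vertical). So the correct statement to invoke is the analogue of Proposition~\ref{lematecnico} in which the distinguished boundary ray is vertical; the proof is the same computation.

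Concretely, first I would apply the Riemann map $F:\widetilde S\to\Ha$ given by $F(z)=(e^{-i\pi/2}z)^{\pi/\Fi}$, i.e. $F(z)=(-iz)^{\pi/\Fi}$, which sends the boundary ray $\ell(\pi/2)$ (the positive imaginary axis) to the positive real axis and the ray $\ell(\pi/2+\Fi)$ to the negative real axis. A point $w=is$ on the imaginary axis with $s>t$ has $|w|=s$, so $F(w)=s^{\pi/\Fi}$; hence $F(E_t)=\{x\in\R:\ x>t^{\pi/\Fi}\}$. This explains why the threshold in $T$ is $t^{\pi/\Fi}$ rather than $|z_t|^{\pi/\Fi}$: when the distinguished ray is vertical, the point of $\partial\widetilde S$ on $\{\Re=0\}$ at height $t$ has modulus exactly $t$, so no correction factor $1/\cos\alpha$ appears. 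Next, for $z=|z|e^{i\theta_0}\in\widetilde S$ I would compute $F(z)=e^{i(\theta_0-\pi/2)\pi/\Fi}|z|^{\pi/\Fi}=|z|^{\pi/\Fi}(\cos\kappa+i\sin\kappa)$ with $\kappa=(\theta_0-\pi/2)\pi/\Fi\in(0,\pi)$, exactly as in the proof of Proposition~\ref{lematecnico}.

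Finally, using the conformal invariance of harmonic measure and the explicit formula for the harmonic measure of a half-line $\{x>T\}$ in $\Ha$ as seen from an arbitrary point — the map $\zeta\mapsto(\zeta-T)/|F(z)-T|$ normalising matters, or simply \cite[Example 7.2.5]{BCD} as used before — I obtain
\[
\omega(z,E_t,\widetilde S)=\omega(F(z),F(E_t),\Ha)=\frac{1}{2\pi}\arctan\!\left(\frac{2T}{T^2-1}\right),
\]
where $T$ is the (signed) horizontal distance from $F(z)$ to the endpoint $t^{\pi/\Fi}$, rescaled by the imaginary part of $F(z)$, namely
\[
T=\frac{t^{\pi/\Fi}-|z|^{\pi/\Fi}\cos\kappa}{|z|^{\pi/\Fi}\sin\kappa},
\]
which is exactly the claimed expression. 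The formula $\arctan(2T/(T^2-1))$ is valid (as a value in $(0,\pi/2)$) precisely when $T>1$, which is the stated restriction. The main obstacle — really the only one — is the bookkeeping of hypotheses: one must notice that Proposition~\ref{lematecnico} is being used in a mirrored configuration where the vertical boundary ray plays the role of the ray $\ell(\alpha)$, and check that the computation of $F(E_t)$ goes through verbatim with $|z_t|$ replaced by $t$. Everything else is a direct substitution into \eqref{pgeneral}.
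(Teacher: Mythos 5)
Your proof is correct and follows essentially the same route as the paper: the paper's one-line proof rotates $\widetilde S$ by $-\min\{\pi/2,\Fi\}$ and appeals to Proposition \ref{lematecnico} together with the rotation invariance of harmonic measure, which unpacks to exactly the computation you perform (the power map $(e^{-i\pi/2}z)^{\pi/\Fi}$ onto the upper half-plane, $F(E_t)=\{x>t^{\pi/\Fi}\}$, and the half-plane harmonic-measure formula of Example 7.2.5 in BCD). Your observation that the threshold $|z_t|^{\pi/\varphi}$ from Proposition \ref{lematecnico} must be replaced by $t^{\pi/\Fi}$ because the distinguished boundary ray is vertical is precisely the adjustment implicit in the paper's terse reduction, so nothing is missing.
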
 
	\begin{proof}
		Considering the rotation-invariance of the harmonic measure, by applying Proposition \ref{lematecnico} after rotating an angle of $-\min\{\frac{\pi}{2},\Fi\}$, the result follows.
	\end{proof}

The computation of the harmonic measure of specific boundary subsets of vertical strips will be needed as well:
\begin{proposition}\label{LTecnico:banda vertical}
Let $t>0$. Consider the vertical strip of width $2t$ given by
\begin{equation*}
    B_t=\{z\in\C: \ |\Re(z)|<t\}.
\end{equation*}
Let $R_t$ be such that $\lim_{t\to\infty}\frac{R_t}{t}=+\infty$. Then, there exists $t_0>0$ such that for all $t>t_0$
 \begin{equation*}
     \omega(0,\{z\in\partial B_t: \ \Im(z)>R_t\}, B_t)
     \approx e^{-{\frac\pi2}\frac{R_t}{t}},
 \end{equation*}
 where the underlying constants do not depend on $t$.
\end{proposition}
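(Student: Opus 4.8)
The plan is to reduce the computation to an explicit harmonic measure on a half-plane via a conformal map, and then estimate. First I would exploit the conformal invariance of harmonic measure: the strip $B_t=\{|\Re(z)|<t\}$ is mapped conformally onto the right half-plane $\C_+$ (or onto $\Ha$) by a suitable composition of an affine normalization and the exponential map. Concretely, the map $z\mapsto \exp\!\big(\frac{\pi}{2t}(z+t)\big)$ sends $B_t$ onto the upper half-plane $\Ha$, carries $0$ to the point $i\,e^{0}=\ldots$ (after bookkeeping, to a fixed point on the positive imaginary axis, namely $e^{\pi/2}$ lies on the positive real axis — so one uses instead $z\mapsto i\exp\!\big(\frac{\pi}{2t}z\big)$ mapping $B_t$ to $\Ha$ with $0\mapsto i$), and sends the boundary ray $\{\Re(z)=\pm t,\ \Im(z)>R_t\}$ to a sub-arc of $\partial\Ha=\R$ of the form $\{x\in\R:\ \pm x > e^{\frac{\pi}{2t}R_t}\}$ (up to a bounded multiplicative constant coming from the normalization). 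Thus the quantity to estimate equals the harmonic measure at $i$ in $\Ha$ of a set of the form $\{|x|>\rho_t\}$ with $\rho_t\approx e^{\frac{\pi}{2t}R_t}$.

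Next I would use the explicit formula for harmonic measure in the half-plane (as in \cite[Example 7.2.5]{BCD}, already invoked in the proof of Proposition \ref{lematecnico}): the harmonic measure at $i$ of $\{x\in\R: x>\rho\}$ is $\frac{1}{\pi}\arctan(1/\rho)$, and of $\{|x|>\rho\}$ it is $\frac{2}{\pi}\arctan(1/\rho)$ (or the $\arctan\!\big(\tfrac{2T}{T^2-1}\big)$-type expression with $T=\rho$). Since $R_t/t\to+\infty$ forces $\rho_t\to+\infty$, for $t$ large enough $\rho_t>1$ and we may use $\arctan(1/\rho_t)\approx 1/\rho_t$, which is valid with absolute constants once $\rho_t\geq 1$. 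Hence the harmonic measure is $\approx 1/\rho_t \approx e^{-\frac{\pi}{2}\frac{R_t}{t}}$, which is exactly the claimed estimate, with a threshold $t_0$ chosen so that $\rho_t\geq 1$ for $t>t_0$. The only mild subtlety is that the conformal map introduces a multiplicative constant in $\rho_t$ depending on the precise normalization of the strip endpoints, but since $\arctan(1/(c\rho))\approx \arctan(1/\rho)$ for any fixed $c>0$ when $\rho$ is large, this does not affect the $\approx$ statement, and the constants remain independent of $t$.

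I would write the argument as follows: (1) fix the conformal map $\Phi(z)=i\exp\!\big(\frac{\pi}{2t}z\big)$ from $B_t$ onto $\Ha$, compute $\Phi(0)=i$ and the image of the two vertical half-lines above height $R_t$, noting that these images are two real half-lines $\{x>\rho_t\}$ and $\{x<-\rho_t\}$ with $\rho_t=e^{\frac{\pi}{2t}R_t}$; (2) invoke conformal invariance to replace $\omega(0,\{z\in\partial B_t:\Im(z)>R_t\},B_t)$ by $\omega(i,\{x\in\R:|x|>\rho_t\},\Ha)$; (3) apply the explicit half-plane formula to get $\omega(i,\{|x|>\rho_t\},\Ha)=\frac{2}{\pi}\arctan(1/\rho_t)$; (4) use $R_t/t\to\infty$ to pick $t_0$ with $\rho_t\geq 1$ for $t>t_0$, and conclude $\frac{2}{\pi}\arctan(1/\rho_t)\approx 1/\rho_t = e^{-\frac{\pi}{2}\frac{R_t}{t}}$.

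The main obstacle — though a modest one — is getting the conformal map and its boundary behaviour exactly right, in particular tracking which portions of $\partial B_t$ map to which real half-lines and verifying that the cutoff height $R_t$ transforms to the exponential $\rho_t=e^{\frac{\pi}{2t}R_t}$; once that bookkeeping is done, the rest is the standard half-plane harmonic measure computation plus the elementary asymptotic $\arctan(x)\approx x$ for small $x>0$. No deep harmonic measure estimate is needed here beyond conformal invariance and the explicit half-plane formula already used earlier in the text.
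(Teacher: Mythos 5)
Your overall strategy is exactly the paper's: transplant to a half-plane by a conformal map, use the explicit $\arctan$ formula from \cite[Example 7.2.5]{BCD}, and finish with $\arctan(x)\approx x$. However, the conformal map you fix in step (1) is wrong, and this is precisely the bookkeeping you singled out as the main obstacle. With a real coefficient in the exponent, $z\mapsto \exp\bigl(\tfrac{\pi}{2t}z\bigr)$ (hence also $z\mapsto i\exp\bigl(\tfrac{\pi}{2t}z\bigr)$) is the standard map of the \emph{horizontal} strip $\{|\Im(z)|<t\}$; on the vertical strip $B_t=\{|\Re(z)|<t\}$ it is not even injective: writing $z=x+iy$, its modulus is $e^{\pi x/(2t)}\in(e^{-\pi/2},e^{\pi/2})$ while its argument $\pi y/(2t)$ runs over all of $\R$, so the image is an annulus covered infinitely often. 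Consequently the boundary images you assert, the real half-lines $\{\pm x>e^{\frac{\pi}{2t}R_t}\}$, are not what your map produces: under your $\Phi$ the two rays $\{\Re(z)=\pm t,\ \Im(z)>R_t\}$ land on circular arcs of radii $e^{\pm\pi/2}$. The imaginary unit must go inside the exponential: the paper uses $F(z)=e^{i\frac{\pi}{2t}z}$, which maps $B_t$ onto $\C_+$ with $F(0)=1$; the rays above height $R_t$ then go to two segments $\{\pm is:\ 0<s<e^{-\frac{\pi}{2}\frac{R_t}{t}}\}$ of the imaginary axis adjacent to $0$ (equivalently, taking $e^{-i\frac{\pi}{2t}z}$ one gets the far set $\{w:\ |\Im(w)|>e^{\frac{\pi}{2}\frac{R_t}{t}}\}$, which is the form appearing in the paper's proof).

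Once the map is corrected, the rest of your argument is fine and coincides with the paper's: by conformal invariance the quantity equals the harmonic measure at $1$ in $\C_+$ of a boundary set of size governed by $X_t=e^{\frac{\pi}{2}\frac{R_t}{t}}$, the explicit half-plane formula gives a value comparable to $\arctan(1/X_t)$, and since $R_t/t\to+\infty$ you may choose $t_0$ so that $X_t$ is large for $t>t_0$, whence $\arctan(1/X_t)\approx 1/X_t=e^{-\frac{\pi}{2}\frac{R_t}{t}}$ with absolute constants. So the gap is not in the idea but in the map itself; as written, steps (1)--(2) fail, and the proof becomes correct only after replacing $\exp\bigl(\tfrac{\pi}{2t}z\bigr)$ by $\exp\bigl(\tfrac{i\pi}{2t}z\bigr)$ and redoing the boundary computation accordingly.
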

\begin{proof}
     For $t>0$, set $E_t=\{z\in\partial B_t: \ \Im(z)>R_t\}$. Consider $F(z)=e^{-i\frac{\pi}{2t}z}$, $z\in B_t$. Note that $F\colon B_t\to\C_+$ conformally. It is also clear that
    \[
    F_t:=F(E_t)=\{z\in i\R: \ |z|>e^{{\frac\pi2}\frac{R_t}{t}}\}.
    \]    
    Then, by the conformal invariance of harmonic measure,
    \begin{align}\label{p1:confinv}
        \omega(0,E_t,B_t)=\omega(F(0),F(E_t),F(B_t))=
        \omega(1,F_t,\C_+).
    \end{align}
    Set $X_t:=e^{{\frac\pi2}\frac{R_t}{t}}$.  Now, by \cite[Example 7.2.5]{BCD},
    \begin{equation}\label{p2:lema+asintotico}
        \omega(1,F_t,\C_+)=\frac{1}{\pi}\arctan\left(
         \frac{2X_t}{X^2_t-1}
        \right)
        \approx\frac{1}{X_t},
    \end{equation}
    for $t>t_0$, for some $t_0$ large enough. Hence, using \eqref{p1:confinv} and \eqref{p2:lema+asintotico}, we get that
    \begin{equation*}
        \omega(0,\{z\in\partial B_t: \ \Im(z)>R_t\}, B_t)
     \approx X_t^{-1}=e^{-{\frac\pi2}\frac{R_t}{t}}.
    \end{equation*}    \end{proof}
    
\section{Main results}\label{seccion general}	

In order to exploit the connection between harmonic measure and the $H^p$-norm from Corollary \ref{corolario formula norma}, we will study Koenigs domains contained in a sector. More specifically, in this section, we introduce some geometric concepts related to the domain $\Omega$. Using these, we provide sufficient conditions for membership of $e^{\lambda h}$ to $H^p$. 

\medskip

We start by introducing the following definition:

\begin{definition}\label{definicion outer maximal angles} Let $\Omega$ be a parabolic Koenigs domain such that $0\in\Omega$. Assume that
$\Omega\subseteq S_p(\al_0,\beta_0),$ for some $p\in \R$, $\al_0,\beta_0 \in [0,\pi]$, and $0<\al_0+\beta_0\leq\pi$. We define the \textit{outer lower maximal angle} $\al_\O^o$ of $\Omega$  as
$$
\al_\O^o := \inf \{ \al\in [0,\pi]: \  \textnormal{there exists } p\in \R \textnormal{ such that } \Omega \subseteq p+e^{i\left(\frac{\pi}{2}-\al\right)}\C_+\}. 
$$
The \textit{outer upper maximal angle} $\beta_\O^o$ of $\Omega$ is given by
$$
\b_\O^o := \inf \{ \b\in [0,\pi]: \  \textnormal{there exists } p\in \R \textnormal{ such that } \Omega \subseteq p+e^{-i\left(\frac{\pi}{2}-\b\right)}\C_+\}. 
$$
\end{definition}
Note that both quantities are well defined and satisfy $\al_\O^o \in [0,\al_0)$, $\beta_\O^o\in [0,\beta_0).$ In particular, $0\leq \a_\O^o+\b_\O^o\leq\pi$. Observe that the containment $\Omega \subseteq S_p(\al_0,\beta_0)$ is equivalent to 
$$
\Omega\subseteq (p+e^{-i\left(\frac{\pi}{2}-\b_0\right)}\C_+)\cap (p+e^{i\left(\frac{\pi}{2}-\al_0\right)}\C_+).
$$
Likewise, 

\begin{definition} Let $\Omega$ be a parabolic Koenigs domain with $0\in\Omega$.  Assume that
$ S_p(\al_1,\beta_1)\subseteq \Omega,$ for some $p\in \R$, $\al_1,\beta_1 \in [0,\pi)$, $0<\al_1+\beta_1<\pi$. We define the \textit{inner lower maximal angle} $\al_\O^i$  of $\Omega$  as 
$$
\al_\O^i := \sup (\{0\}\cup \{ \al\in [0,\pi] : \textnormal{there exists } p\in \R\cap\Omega \textnormal{ such that }  S_p(\al,0)\subseteq \Omega  \}). 
$$
The \textit{inner upper maximal angle} $\b_\O^i$ of $\Omega$ is given by
$$
\b_\O^i := \sup (\{0\} \cup \{ \b\in [0,\pi] : \textnormal{there exists } p\in \R\cap\Omega \textnormal{ such that }  S_p(0,\b)\subseteq \Omega  \}). 
$$
On the other hand, if $\Omega$ does not contain any angular sector, we define $\al_\O^i=\beta_\O^i = 0$.
\end{definition}
Observe that the containment $S_p(\al_1,\beta_1)\subseteq \Omega$ is equivalent to
$S_p(\al_1,0)\cup S_p(0,\beta_1)\subseteq \Omega.$ Again, $\al_\O^i$ and $\beta_\O^i$ are well defined. Moreover, under the assumption of Definition \ref{definicion outer maximal angles}, they satisfy $\al_\O^i \in [0,\al_\O^o],$ $\b_\O^i \in [0,\b_\O^o].$ In particular, $0\leq \al_\O^i+\b_\O^i\leq \pi$. 

\medskip

It is not difficult to construct examples of Koenigs domains $\Omega$ with prescribed maximal angles $\a_\O^i,\al_\O^o,\beta_\O^i,\beta_\O^o$:

\begin{example}
   Consider $0<\al_\O^i < \al_\O^o < \pi$ and $0<\beta_\O^i<\beta_\O^o < \pi$ with $\al_\O^o+\beta_\O^o < \pi$. First, we construct $\Omega'$ as the starlike at infinity domain bounded by the curve $\Gamma$, which is defined as
    $$\Gamma = V_0\cup \bigcup_{n=1}^\infty (H^\al_n\cup H^\beta_n \cup V^\al_n\cup V^\beta_n),$$
    where $V_0$ is a vertical segment, $(H^\al_n)$ and $(H^\b_n)$ are sequences of horizontal segments and $(V^\al_n)$ and $(V_n^\beta)$ are sequences of vertical segments.  We construct them in the following way:
    \begin{enumerate}
    \item [Step I.] Set $S^o = S(\al_\O^o,\beta_\O^o)$ and $S^i = S(\al_\O^i, \beta_\O^i)$. Recall that $\partial S^o = \ell(-\al_\O^o)\cup \ell(\beta_\O^o)$ and $\partial S^i = \ell(-\al_\O^i)\cup \ell(\beta_\O^i)$.
    \item [Step II.] Choose $V_0=[h_1^\al, h_1^\b]$ to be a vertical segment such that $h_1^\al \in \ell(-\al_\O^o)$ and $h_1^\b\in \ell(\b_\O^o)$.
    \item [Step III.] Define $H_1^\al = [h_1^\al, v_1^\al]$ the horizontal segment such that $v_1^\al \in \ell(-\al_\O^i)$ and $H_1^\beta =[h_1^\beta, v_1^\beta]$ such that $v_1^\b \in \ell(\b_\O^i)$
    \item [Step IV.] For each $n\in \N$, we define inductively $V_n^\al=[v_n^\al, h_{n+1}^\al]$ as the vertical segment such that $h_{n+1}^\al \in \ell(-\al_\O^o)$ and $V_n^\beta = [v_n^\b, h_{n+1}^\b]$ as the vertical segment such that $h_{n+1}^\b \in \ell(\b_\O^o)$. Likewise, define $H_{n+1}^\al = [h_{n+1}^\al, v_{n+1}^\al]$ as the horizontal segment such that $v_{n+1}^\al\in \ell(-\al_\O^i)$ and $H_{n+1}^\b = [h_{n+1}^\b, v_{n+1}^\b]$ as the horizontal segment such that $v_{n+1}^\b\in \ell(\b_\O^i)$.
    \end{enumerate}
    Finally, consider $\Omega$ as any translation of $\Omega'$ containing $0$.   By construction, $\Omega$ has the prescribed maximal angles. The same ideas, with the suitable modifications, provide examples of domains $\Omega$ with the remaining configurations of maximal angles not considered in the previous construction.
\end{example}

The inner and outer maximal angles serve us as a starting point to characterize the point spectrum of $\Delta$. We illustrate this fact through the following containment relations:

\begin{theorem}\label{teorema contenciones general}
    Let $(\Fi_t)_{t>0}$ be a parabolic semigroup on $\D$. Assume that $\Omega \subseteq S_p(\al_0,\beta_0)$ for some $p\in \R$, $\al_0,\beta_0\in [0,\pi]$ with $0<\al_0+\beta_0\leq\pi$, and let $\al_\O^o$ and $\beta_\O^o$ be the outer maximal angles of $\Omega$. Then
    $$
    \esp \supseteq \left\{ re^{i\theta}: r\geq 0, \frac{\pi}{2}+\al_\O^o< \theta < \frac{3\pi}{2}-\beta_\O^o\right\}.
    $$
    Moreover, if $\Omega$  has inner maximal angles $\al_\O^i$ and $\beta_\O^i$, then
    $$
    \esp \subseteq  \left\{ re^{i\theta}: r\geq 0, \frac{\pi}{2}+\al_\O^i\leq \theta \leq \frac{3\pi}{2}-\beta_\O^i\right\}.
    $$
\end{theorem}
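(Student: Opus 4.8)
The plan is to prove the two inclusions separately, in each case by squeezing $\Omega$ between angular sectors and combining Theorem~\ref{espectro sector angular} with the monotonicity of the point spectrum under inclusion of Koenigs domains (Proposition~\ref{contencion espectro puntual}).

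\emph{First inclusion (outer angles).} If $\al_\O^o+\beta_\O^o\ge\pi$ the sector on the right-hand side is empty and there is nothing to prove; so assume $\al_\O^o+\beta_\O^o<\pi$, recall that $0\in\esp$, and fix $\lambda=re^{i\theta}$ with $r>0$ and $\frac{\pi}{2}+\al_\O^o<\theta<\frac{3\pi}{2}-\beta_\O^o$. Since $\theta-\frac{\pi}{2}>\al_\O^o$, $\frac{3\pi}{2}-\theta>\beta_\O^o$ and $\al_\O^o+\beta_\O^o<\pi$, I may pick $\al\in(\al_\O^o,\theta-\frac{\pi}{2}]$ and $\beta\in(\beta_\O^o,\frac{3\pi}{2}-\theta]$ with $\al+\beta<\pi$. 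By the definition of the outer maximal angles there exist $p_1,p_2\in\R$ with
\[
\Omega\subseteq\bigl(p_1+e^{i(\pi/2-\al)}\C_+\bigr)\cap\bigl(p_2+e^{-i(\pi/2-\beta)}\C_+\bigr),
\]
and since $0<\al+\beta<\pi$ the two boundary lines are not parallel, so this intersection is an angular sector $S_{p'}(\al,\beta)$ of opening $\al+\beta$, $p'$ being the point where the lines cross. Now $\lambda S_{p'}(\al,\beta)=\lambda p'+e^{i\theta}S(\al,\beta)$ has opening directions filling the arc $[\theta-\al,\theta+\beta]\subseteq[\frac{\pi}{2},\frac{3\pi}{2}]$, all of which have non-positive real part; hence $\lambda\Omega\subseteq\lambda S_{p'}(\al,\beta)\subseteq\{z:\Re z<\Re(\lambda p')+1\}$ is contained in a left half-plane, and Proposition~\ref{proposicion contenido semiplano} gives $e^{\lambda h}\in H^\infty\subseteq H^p$, i.e.\ $\lambda\in\esp$. (Equivalently, from $\Omega\subseteq S_{p'}(\al,\beta)$ with $\al+\beta\le\pi$ one may instead quote Proposition~\ref{contencion espectro puntual} and Theorem~\ref{espectro sector angular}(i) to get $\lambda\in\sigma_p(\Delta_{S_{p'}(\al,\beta)})\subseteq\esp$.) Letting $(\al,\beta)$ range over all admissible choices yields the first inclusion.

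\emph{Second inclusion (inner angles).} I first observe that $\esp\subseteq\{re^{i\theta}:r\ge0,\ \frac{\pi}{2}\le\theta\le\frac{3\pi}{2}\}=\{z:\Re z\le0\}$. Indeed, if $\lambda\in\esp$ then by \eqref{spectral mapping theorem point spectrum} $e^{t\lambda}\in\sigma_p(C_{\Fi_t}\mid_{H^p})$ for all $t>0$, so $e^{t\Re\lambda}=|e^{t\lambda}|\le\|C_{\Fi_t}\|_{H^p}\le\bigl(\frac{1+|\Fi_t(0)|}{1-|\Fi_t(0)|}\bigr)^{1/p}=e^{2k_\D(0,\Fi_t(0))/p}$ by Littlewood's subordination principle, and since $(\Fi_t)$ is parabolic one has $t^{-1}k_\D(0,\Fi_t(0))\to0$ as $t\to\infty$ (see \cite{BCD}); dividing by $t$ and letting $t\to\infty$ gives $\Re\lambda\le0$. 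Next, if $\al_\O^i>0$, then for each $\al\in(0,\al_\O^i)$ there is $p\in\R\cap\Omega$ with $S_p(\al,0)\subseteq\Omega$; as $S_p(\al,0)$ is the Koenigs domain of a parabolic semigroup with $\al+0\le\pi$, Proposition~\ref{contencion espectro puntual} and Theorem~\ref{espectro sector angular}(i) give
\[
\esp\subseteq\sigma_p(\Delta_{S_p(\al,0)}\mid_{H^p})=\Bigl\{re^{i\theta}:r\ge0,\ \frac{\pi}{2}+\al\le\theta\le\frac{3\pi}{2}\Bigr\},
\]
and letting $\al\uparrow\al_\O^i$ yields $\esp\subseteq\{re^{i\theta}:\frac{\pi}{2}+\al_\O^i\le\theta\le\frac{3\pi}{2}\}$; when $\al_\O^i=0$ this same inclusion is just the observation above. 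Symmetrically, using the sectors $S_q(0,\beta)\subseteq\Omega$ one obtains $\esp\subseteq\{re^{i\theta}:\frac{\pi}{2}\le\theta\le\frac{3\pi}{2}-\beta_\O^i\}$, and intersecting the two inclusions gives the second inclusion.

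\emph{Expected main obstacle.} The delicate point is the geometric bookkeeping in the first inclusion—verifying that the two half-planes produced by the outer angles genuinely cut out a convex angular sector and that multiplication by $\lambda$ rotates it into a left half-plane—while the inequality $\esp\subseteq\{\Re z\le0\}$ in the second step is the only place requiring input from semigroup theory (the sub-linear growth of $k_\D(0,\Fi_t(0))$ for parabolic semigroups) beyond the results already available; the remaining passages to the supremum/infimum over admissible angles are routine.
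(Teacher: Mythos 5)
Your argument is correct and follows essentially the paper's route: the paper proves this theorem in one line by squeezing $\Omega$ between angular sectors and invoking Proposition \ref{contencion espectro puntual} together with Theorem \ref{espectro sector angular} (equivalently, for the first inclusion, Proposition \ref{proposicion contenido semiplano}), which is exactly your skeleton. Two remarks. First, in the outer step the assertion ``by the definition of the outer maximal angles there exist $p_1,p_2\in\R$ with $\Omega\subseteq(p_1+e^{i(\pi/2-\al)}\C_+)\cap(p_2+e^{-i(\pi/2-\beta)}\C_+)$'' is not literally what the definition gives: $\al_\O^o,\beta_\O^o$ are infima, and the set of angles for which a containing half-plane exists need not be upward closed (for $\Omega$ the upper half-plane one has $\al_\O^o=0$, yet no half-plane $p+e^{i(\pi/2-\al)}\C_+$ with $\al>0$ contains $\Omega$). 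Your particular choice does satisfy the containment because $\al<\pi-\beta_\O^o$ and $\beta<\pi-\al_\O^o$, but justifying that requires precisely the sector construction you are carrying out; the clean repair is to pick angles $\al'\in(\al_\O^o,\theta-\pi/2)$ and $\beta'\in(\beta_\O^o,3\pi/2-\theta)$ for which containing half-planes are guaranteed to exist (possible since the infima lie strictly below these thresholds and $\al_\O^o+\beta_\O^o<\pi$), and run the rest of your argument verbatim with $\al',\beta'$. Second, your extra step $\esp\subseteq\{z:\Re z\le 0\}$ (Littlewood subordination plus the vanishing divergence rate of parabolic semigroups) is exactly what is needed when $\al_\O^i=\beta_\O^i=0$, i.e.\ when $\Omega$ contains no angular sector and the monotonicity argument yields nothing; the paper's terse proof leaves this case implicit (it also follows from $r(C_{\Fi_t})=1$, Proposition \ref{proposicion radio espectral}), so this addition is a legitimate completion rather than a deviation, and it is only needed in that degenerate case, since whenever $\al_\O^i>0$ or $\beta_\O^i>0$ the sector $S_p(\al,0)$ or $S_p(0,\beta)$ already enforces $\theta\le 3\pi/2$, respectively $\theta\ge\pi/2$.
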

\begin{proof}
    The result follows as a consequence of the definition of the outer and inner maximal angles of $\Omega$ and Proposition  \ref{contencion espectro puntual}.
\end{proof}\newpage
	\begin{figure}[h!!]
	\centering
	\resizebox{0.60\textwidth}{!}{
		\begin{tikzpicture}[>=stealth]
			
			\draw[->] (-5,0) -- (5,0) node[right] {$x$};
			\draw[->] (0,-5) -- (0,5) node[above] {$y$};
			
			\node at (0,0) [below left] {$0$};
			
			\fill[blue!20, opacity=0.6] 
			(0,0) -- ({5*cos(150)}, {5*sin(150)}) 
			arc (150:210:5) 
			-- cycle;

			\fill[blue!10, opacity=0.4] 
			(0,0) -- ({5*cos(120)}, {5*sin(120)}) 
			arc (120:240:5) 
			-- cycle;
			
			\draw[->, dashed] (0,0) -- ({5*cos(150)}, {5*sin(150)});

			\draw[->, thick] (0,0) -- ({5*cos(120)}, {5*sin(120)});
			
			\draw[->, dashed] (0,0) -- ({5*cos(210)}, {5*sin(210)});

			\draw[->, thick] (0,0) -- ({5*cos(240)}, {5*sin(240)});
			
			\node at ({2.8*cos(180)}, {2.8*sin(180) + 1}) {$\sigma_p(\Delta_{\mid H^p})$};
			
			\draw (0,0) ++(90:1.2) arc (90:150:1.2);
			\node at ({1.1*cos(120)}, {1.1*sin(120)+0.5}) {$\alpha^o_\Omega$};

			\draw (0,0) ++(90:2) arc (90:120:2);
			\node at ({2*cos(120)}, {2*sin(120)+0.5}) {$\alpha^i_\Omega$};
			
			\draw (0,0) ++(270:1.2) arc (270:210:1.2);
			\node at ({1.1*cos(240)}, {1.1*sin(240)-0.5}) {$\beta^o_\Omega$};

			\draw (0,0) ++(270:2) arc (270:240:2);
			\node at ({2*cos(240)}, {2*sin(240)-0.5}) {$\beta^i_\Omega$};
	\end{tikzpicture}}
    \caption{Containments of $\esp$ in Theorem \ref{teorema contenciones general}.}
\end{figure}

The next result allows us to improve the containment relations for $\esp$ from Theorem \ref{teorema contenciones general}:

  \begin{proposition}\label{proposicion suma espectros}
       Let $(\Fi_t)_{t>0}$ be a parabolic semigroup on $\D$.  Assume that $\Omega \subseteq S_p(\al_0,\beta_0)$ for some $p\in \R$, $\al_0, \beta_0\in [0,\pi]$, with $\alpha_0+\beta_0\leq\pi$, and let $\al_\O^o, \beta_\O^o,\al_\O^i,\beta_\O^i$ be the outer and inner maximal angles of $\Omega$.
       \begin{enumerate}
           \item [(i)] Let $\lambda_0 = |\lambda_0|e^{i\left(\frac{\pi}{2}+\sigma\right)} \in \esp$,  with $\sigma \in [\al_\O^i,\al_\O^o]$ and $|\lambda_0| >0.$ Then
           \begin{equation}\label{contencion suma}
           \left\{re^{i\theta}: r\geq 0, \frac{\pi}{2}+\al_\O^o < \theta < \frac{3\pi}{2}-\beta_\O^o\right\} +\left\{re^{i\left( \frac{\pi}{2}+\sigma\right)}: 0\leq r\leq |\lambda_0|\right\}\subseteq \esp. 
           \end{equation}
           Moreover, if $|\lambda_0|e^{i\left( \frac{\pi}{2}+\sigma\right)} \in \esp$ for all $|\lambda_0|>0$, then
           \begin{equation}\label{contencion suma 2}
               \left\{re^{i\theta}: r\geq 0, \frac{\pi}{2}+\sigma \leq \theta < \frac{3\pi}{2}- \beta_\O^o\right\}\subseteq \esp.
           \end{equation}        
           \item [(ii)] Let $\lambda_0 = |\lambda_0|e^{i\left(\frac{3\pi}{2}-\nu\right)} \in \esp$,  with $\nu \in [\b_\O^i,\b_\O^o]$ and $|\lambda_0| >0.$ Then
           \begin{equation*}
               \left\{re^{i\theta}: r\geq 0, \frac{\pi}{2}+\al_\O^o < \theta < \frac{3\pi}{2}-\beta_\O^o\right\} +\left\{re^{i\left( \frac{3\pi}{2}-\nu\right)}: 0\leq r\leq |\lambda_0|\right\}\subseteq \esp.
           \end{equation*} 
           Moreover, if $|\lambda_0|e^{i\left( \frac{3\pi}{2}-\nu\right)} \in \esp$ for all $|\lambda_0|>0$, then
           \begin{equation*}
               \left\{re^{i\theta}: r\geq 0, \frac{\pi}{2}+\al_\O^o < \theta \leq \frac{3\pi}{2}- \nu \right\}\subseteq \esp.
           \end{equation*}
       \end{enumerate}
  \end{proposition}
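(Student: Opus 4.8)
The plan is to treat part $(i)$ carefully and then obtain $(ii)$ by the symmetry $z\mapsto\bar z$ (equivalently, by applying Lemma \ref{lema giro} with a suitable reflection, or simply reindexing lower and upper angles). For part $(i)$, the first observation is that $\esp$ is always star-shaped with respect to $0$ (Lemma \ref{lema conexion}), so the segment $\{re^{i(\pi/2+\sigma)}:0\le r\le|\lambda_0|\}$ lies entirely in $\esp$ once $\lambda_0$ does. Next, by Theorem \ref{teorema contenciones general}, every $\mu$ in the open cone $\{re^{i\theta}:r\ge 0,\ \pi/2+\al_\O^o<\theta<3\pi/2-\beta_\O^o\}$ belongs to $\esp$; and by the definition of the outer maximal angles, for such $\mu$ one has that $\mu\Omega$ is contained in a left half-plane (this is exactly the content of the outer-angle definition: $\Omega\subseteq p+e^{i(\pi/2-\al_\O^o)}\C_+$ up to $\varepsilon$, so multiplying by $\mu=re^{i\theta}$ with $\theta$ strictly between the two critical values rotates $\Omega$ strictly inside a left half-plane). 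Therefore Proposition \ref{proposicion contenido semiplano} applies: $e^{\mu h}\in H^\infty$ is a multiplier of $H^p$, and since $e^{\lambda_0 h}\in H^p$ (and hence $e^{rh\,e^{i(\pi/2+\sigma)}}\in H^p$ for $0\le r\le|\lambda_0|$ by star-shapedness), we get $e^{(\mu + r e^{i(\pi/2+\sigma)})h}=e^{\mu h}e^{re^{i(\pi/2+\sigma)}h}\in H^p$. This proves the containment \eqref{contencion suma}.

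For \eqref{contencion suma 2}, suppose now $|\lambda_0|e^{i(\pi/2+\sigma)}\in\esp$ for \emph{all} $|\lambda_0|>0$, i.e.\ the whole ray $\ell(\pi/2+\sigma)$ is in $\esp$. Take any $\theta$ with $\pi/2+\sigma\le\theta<3\pi/2-\beta_\O^o$ and any $r>0$; I want $re^{i\theta}\in\esp$. If $\theta$ is strictly inside the open cone $(\pi/2+\al_\O^o,3\pi/2-\beta_\O^o)$ we are already done by Theorem \ref{teorema contenciones general}. The remaining case is $\pi/2+\sigma\le\theta\le\pi/2+\al_\O^o$ (note $\sigma\le\al_\O^o$, so this is a genuine subinterval). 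Here the idea is to write $re^{i\theta}$ as a sum $w_1+w_2$ where $w_1=se^{i(\pi/2+\sigma)}$ lies on the ray already known to be in $\esp$ (with $s$ as large as needed) and $w_2$ lies in the open cone, so $e^{w_2 h}\in H^\infty$ is a multiplier by Proposition \ref{proposicion contenido semiplano} again; the decomposition exists precisely because the ray direction $\pi/2+\sigma$ and any direction in the open cone span, between them, a closed angular region that covers all of $[\pi/2+\sigma,\pi/2+\al_\O^o]\times\{r>0\}$ by taking $s\to\infty$. One then concludes $e^{re^{i\theta}h}=e^{w_1 h}e^{w_2 h}\in H^p$. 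Letting $r$ range over $(0,\infty)$ and including $r=0$ (which is always in $\esp$) gives \eqref{contencion suma 2}.

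The main obstacle is the geometric bookkeeping in the last step: one must verify that the cone $\{re^{i\theta}:r\ge0,\ \pi/2+\al_\O^o<\theta<3\pi/2-\beta_\O^o\}$ together with the full ray $\ell(\pi/2+\sigma)$ generates, under Minkowski addition, the closed sector $\{re^{i\theta}:r\ge 0,\ \pi/2+\sigma\le\theta<3\pi/2-\beta_\O^o\}$ — and in particular that the boundary ray $\ell(\pi/2+\al_\O^o)$ (which Theorem \ref{teorema contenciones general} does \emph{not} give) is recovered. This is a convexity argument: given the target direction $\theta_*\in[\pi/2+\sigma,\pi/2+\al_\O^o]$, pick an auxiliary direction $\theta'$ strictly inside the open cone with $\theta_*$ strictly between $\pi/2+\sigma$ and $\theta'$; then for each $r>0$ the point $re^{i\theta_*}$ lies on the segment joining the ray $\ell(\pi/2+\sigma)$ to the ray $\ell(\theta')$, so $re^{i\theta_*}=w_1+w_2$ with $w_1\in\ell(\pi/2+\sigma)$ and $w_2\in\ell(\theta')\subset$ open cone, which is exactly what the multiplier argument needs. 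Everything else is a direct invocation of Lemma \ref{lema conexion}, Theorem \ref{teorema contenciones general}, Proposition \ref{proposicion contenido semiplano}, and the multiplier property of bounded analytic functions on $H^p$.
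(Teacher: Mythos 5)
Your proposal is correct and follows essentially the same route as the paper: the segment on the ray $\ell\left(\tfrac{\pi}{2}+\sigma\right)$ comes from Lemma \ref{lema conexion}, the points of the open cone give $H^\infty$ multipliers via the definition of the outer maximal angles and Proposition \ref{proposicion contenido semiplano}, and \eqref{contencion suma} and \eqref{contencion suma 2} follow from the closing statement of that proposition. The only difference is that you spell out the Minkowski-sum/convexity bookkeeping for \eqref{contencion suma 2} (writing $re^{i\theta_*}=w_1+w_2$ with $w_1$ on the ray and $w_2$ in the open cone), which the paper leaves implicit; this is a welcome clarification rather than a departure.
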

  \begin{proof}
     It suffices to prove $(i)$, the proof of $(ii)$ being identical. First, observe that, by the definition of the outer maximal angles, for every $\mu \in \left\{re^{i\theta}: r\geq 0, \frac{\pi}{2}+\al_\O^o < \theta < \frac{3\pi}{2}-\beta_\O^o\right\}$, we have that $\mu \Omega$ is contained in a left half-plane. Hence, by Proposition \ref{proposicion contenido semiplano}, $e^{\mu h}\in H^\infty$. Now, if $\lambda_0 \in \esp,$ according to Proposition \ref{proposicion convexidad}, $\lambda = |\lambda|e^{i\left(\frac{\pi}{2}+\sigma\right)} \in \esp$ for all $|\lambda|\leq |\lambda_0|$. Now, let $\mu  \in \left\{re^{i\theta}: r\geq 0, \frac{\pi}{2}+\al_\O^o < \theta < \frac{3\pi}{2}-\beta_\O^o\right\}$ and $\lambda$ be as above. The result follows from applying Proposition \ref{proposicion contenido semiplano}.
  \end{proof}
  
Based on the latter results, in order to completely characterize the point spectra of infinitesimal generators associated to parabolic Koenigs domains $\Omega$, it remains to study whether $e^{\lambda h} \in H^p$ for those $\lambda$ not covered by Theorem \ref{teorema contenciones general}. That is, determine whether $\lambda = |\lambda|e^{i\left(\frac{\pi}{2}+\sigma\right)}$, with $\sigma \in \left[\al_\O^i,\al_\O^o\right]$ or $\mu = |\mu|e^{i\left(\frac{3\pi}{2}-\nu\right)}$, with $\nu \in \left[\b_\O^i,\b_\O^o\right]$ belong to $\esp$ or not. To do so, in view of Corollary \ref{corolario formula norma}, it will be crucial to estimate the $H^p$-norm in terms of harmonic measure.

\medskip
To find such estimates, we need a manageable description of the boundary of the rotated domains $\Omega_\sigma = e^{i\sigma}\Omega$ and $\Omega_{\nu} = e^{-i\nu}\Omega$. Since these are not Koenigs domains, they do not have an \textit{a priori} associated defining function, in the sense of the one introduced in page \pageref{pagina defining functions}. However, we may regularize the rotated Koenigs domains via the following concept, giving proper defining functions:

\begin{definition}\label{definicion hull}
    The \textit{starlike at infinity hull} of a domain $G\subset \C$, denoted by $\coinf(G)$, is defined as

$$
\coinf(G) = \bigcup_{t\geq 0} (G+t).
$$
\end{definition}
Observe that $\coinf(G)$ is the smallest starlike at infinity domain containing $G$. If $\coinf(G)\neq \C$, then $\coinf(G)$ is a Koenigs domain.

\medskip

Now, let $\Omega$ be a parabolic Koenigs domain, $0\in \Omega$, such that $\Omega \subseteq S_p(\al_0,\beta_0)$ for some $p\in \R$, $\al_0\in [0,\pi),\beta_0\in [0,\pi)$. Also, let $\al_\O^o, \beta_\O^o,\al_\O^i,\beta_\O^i$ be the outer and inner maximal angles of $\Omega$, respectively. For each $\sigma \in [\al_\O^i,\al_\O^o]$ and $\nu \in [\beta_\O^i, \beta_\O^o]$ consider $\Omega_\sigma = e^{i\sigma}\Omega$ and $\Omega_\nu = e^{-i\nu}\Omega$. Then, $\coinf(\Omega_\sigma)$ and $\coinf(\Omega_\nu)$ are parabolic Koenigs domains with defining functions 
$\psi_{\Omega_\sigma}:I_\sigma\rightarrow [-\infty,+\infty)$ and $\psi_{\Omega_\nu}:I_\nu\rightarrow [-\infty,+\infty)$, respectively.

\begin{remark}
The defining functions $\psi_{\Omega_\s}$ and $\psi_{\Omega_\nu}$ introduced above can be geometrically described in terms of $\Omega$. We discuss the construction of $\psi_{\Omega_\s}$, the one for $\psi_{\Omega_\nu}$ being identical. Consider $\Omega_\s = e^{i\sigma}\Omega$ and define $I_\s=\{\Impart(z)\colon z\in\Omega_\s\}.$ Now, for each $y \in I_\s$, consider the line $L_y = \{z\in \C\colon  \Im(z)=y\}.$ It is clear that $L_y$ intersects $\Omega_\sigma$. Consider now $\kappa_{\Omega}(y)= \inf \{\PR(z)\colon z\in L_y\cap \Omega_\sigma\}$. It is straightforward to check that $\kappa_{\Omega}$ coincides with the defining function $\psi_{\Omega_\sigma}$ of $\coinf(\Omega_\sigma)$.
\end{remark}

The following two results provide some basic properties of the defining functions that shall be needed in what follows.

\begin{proposition}\label{proposicion no convexo defining functions}
     Let $(\Fi_t)_{t>0}$ be a parabolic semigroup on $\D$.  Assume that $\Omega \subseteq S_p(\al_0,\beta_0)$ for some $p\in \R$, $\al_0\in [0,\pi),\beta_0\in [0,\pi)$ with $0<\al_0+\beta_0<\pi$. Suppose that $\al_\O^o+\beta_\O^o > 0$.
         \begin{enumerate}

             \item [(a)] Let $\sigma \in [\al_\O^i,\al_\O^o]$ and consider $\psi_{\Omega_\sigma}: I_\sigma \rightarrow [-\infty,+\infty).$ There exists $a \in \R$ such that $[a,+\infty) \subset I_\s$. Moreover, $I_\s = \R$ if and only if either
             \begin{itemize}

\smallskip
             
                 \item[i)] $\sigma< \al_\O^o$, or

\smallskip
                 
                 \item[ii)] $\sigma=\al_\O^o$ and $\Omega$ is not contained in any sector $S_{z_0}(\al_\Omega^o,\beta)$,  with $z_0\in \R$ and $\beta \in [\beta_\Omega^i,\pi)$.
             \end{itemize}
              In such a case, there exist $t_0>0$ and $\delta>0$  such that $\psi_{\Omega_{\s}}(-t) \geq \delta$ for all $t\in [t_0,+\infty)$ and $$\limsup\limits_{t\rightarrow + \infty} \psi_{\Omega_\sigma}(-t) = +\infty.$$
             \item [(b)] Let $\nu \in [\b_\O^i,\b_\O^o]$ and consider $\psi_{\Omega_\nu}: I_\nu \rightarrow [-\infty,+\infty).$ There exists $b \in \R$ such that $(-\infty,b] \subset I_\nu$. Moreover, $I_\nu = \R$ if and only if, either
             \begin{itemize}

             \smallskip
             
                 \item[i)] $\nu< \b_\O^o$ or,

\smallskip
                 
                 \item[ii)] $\nu=\b_\O^o$ and $\Omega$ is not contained in any sector $S_{z_0}(\al,\beta_\O^o)$,  with $z_0\in \R$ and $\al \in [\al_\Omega^i,\pi)$.
             \end{itemize}
              In such a case, there exist $t_0>0$ and $\delta >0$ such that $\psi_{\Omega_{\nu}}(t) \geq \delta$ for all $t\in [t_0,+\infty)$ and $$\limsup\limits_{t\rightarrow+\infty} \psi_{\Omega_{\nu}}(t) = + \infty.$$
         \end{enumerate}
         \end{proposition}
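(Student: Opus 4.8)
The plan is to prove part $(a)$; part $(b)$ is entirely symmetric (reflecting across the real axis, which sends $\alpha$-data to $\beta$-data). Throughout, recall from the Remark preceding the statement that $\psi_{\Omega_\sigma}(y) = \kappa_\Omega(y) = \inf\{\Re(z)\colon z\in L_y\cap \Omega_\sigma\}$, where $L_y$ is the horizontal line at height $y$, and $\Omega_\sigma = e^{i\sigma}\Omega$.

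\emph{Step 1: $[a,+\infty)\subset I_\sigma$.} Since $0\in\Omega$, the domain $\Omega$ contains $0$ and hence, being starlike at infinity, contains the horizontal ray $[0,+\infty)$. Since $\Omega\subseteq S_p(\alpha_0,\beta_0)$ with $\alpha_0+\beta_0<\pi$, the inclusion $\Omega\subseteq S_p(\alpha_0,\beta_0)$ and $\alpha_0+\beta_0<\pi$ forces $\Omega$ to lie in a sector of opening strictly less than $\pi$; applying $e^{i\sigma}$ with $\sigma\in[\alpha_\Omega^i,\alpha_\Omega^o]\subseteq[0,\alpha_0)$, the rotated domain $\Omega_\sigma$ still lies in a sector of opening $<\pi$ whose axis has a positive horizontal component, so the heights attained by $\Omega_\sigma$ on the region $\{\Re(z)>R\}$ for large $R$ fill an unbounded interval of $y$-values. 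More concretely, $\Omega_\sigma$ contains the image under $e^{i\sigma}$ of a horizontal half-plane-like region far to the right (since $\Omega$ contains translates $\Omega + t$ of a neighborhood of $0$), and these images are cones opening to the right; intersecting with $L_y$ is nonempty for all $y$ with $|y|$ large of the appropriate sign, and for all $y$ between. A cleaner route: pick any $w_0\in\Omega_\sigma$ with maximal and minimal imaginary parts available; since $\Omega_\sigma$ is open and connected and $\Omega$ is starlike at infinity, $\Omega_\sigma$ contains an honest sector $e^{i\sigma}S_{q}(\gamma)$ of some positive opening $2\gamma$ around a direction making angle $\sigma$ with the positive real axis, and that sector meets $L_y$ in a right-infinite segment for every $y\geq a$ for suitable $a$; this gives $[a,+\infty)\subset I_\sigma$ and simultaneously shows $\psi_{\Omega_\sigma}(y)<+\infty$ there.

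\emph{Step 2: the dichotomy for $I_\sigma=\R$.} The interval $I_\sigma$ is the set of heights attained by $\Omega_\sigma$, so $I_\sigma=\R$ fails precisely when $\Omega_\sigma$ is contained in a horizontal half-plane $\{\Im(z)>c\}$ (it cannot be in an upper half-plane bounded above together with a lower one, as $I_\sigma$ is an interval containing $[a,+\infty)$ by Step 1). Undoing the rotation, $\Omega_\sigma\subseteq\{\Im(z)>c\}$ means $\Omega\subseteq e^{-i\sigma}\{\Im(z)>c\} = \{z\colon \Im(e^{i\sigma}z)>c\}$, i.e. $\Omega$ is contained in the half-plane whose boundary line has direction $e^{-i\sigma}$, equivalently $\Omega \subseteq p_0 + e^{i(\pi/2-\sigma)}\C_+$ for a suitable $p_0\in\R$ (using that $\Omega$ is starlike at infinity, so any bounding half-plane can be taken with real boundary point). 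By the definition of the outer lower maximal angle $\alpha_\Omega^o$, such an inclusion forces $\sigma\geq\alpha_\Omega^o$, hence $\sigma=\alpha_\Omega^o$. When $\sigma=\alpha_\Omega^o$, the inclusion $\Omega\subseteq p_0+e^{i(\pi/2-\alpha_\Omega^o)}\C_+$ together with the already-known upper bound $\Omega\subseteq p+e^{-i(\pi/2-\beta_0)}\C_+$ (from $\Omega\subseteq S_p(\alpha_0,\beta_0)$) and the fact that $\Omega$ contains the sector witnessing $\beta_\Omega^i$ shows $\Omega$ sits inside a genuine sector $S_{p_0}(\alpha_\Omega^o,\beta)$ with $\beta\in[\beta_\Omega^i,\pi)$ — and conversely if $\Omega\subseteq S_{p_0}(\alpha_\Omega^o,\beta)$ then obviously $\Omega_\sigma$ with $\sigma=\alpha_\Omega^o$ lies in a horizontal half-plane. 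This gives both implications of the stated equivalence: $I_\sigma=\R$ iff not $[\sigma=\alpha_\Omega^o$ and $\Omega$ contained in some $S_{p_0}(\alpha_\Omega^o,\beta)]$, which is exactly cases (i)/(ii).

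\emph{Step 3: the lower bound and $\limsup$.} Now assume $I_\sigma=\R$. We must produce $t_0,\delta>0$ with $\psi_{\Omega_\sigma}(-t)\geq\delta$ for $t\geq t_0$ and $\limsup_{t\to+\infty}\psi_{\Omega_\sigma}(-t)=+\infty$. The first claim: the line $L_{-t}$ for $t$ large must cross into $\Omega_\sigma$ "from the right", because $\Omega$ (hence $\Omega_\sigma$) lies in a sector of opening $<\pi$, so for heights $y\to-\infty$ the domain $\Omega_\sigma$ has moved to positive real part; quantitatively, $\Omega_\sigma\subseteq e^{i\sigma}S_p(\alpha_0,\beta_0)$, and intersecting $e^{i\sigma}S_p(\alpha_0,\beta_0)$ with $L_{-t}$ gives a segment whose left endpoint has real part $\to+\infty$ (at linear rate in $t$), since $\sigma<\alpha_0$ makes the lower edge $e^{i\sigma}\ell_p(-\alpha_0)$ point into the lower-right. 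Taking $t_0$ large enough that this left endpoint has real part $\geq\delta:=1$, say, gives $\psi_{\Omega_\sigma}(-t)\geq\delta$ for $t\geq t_0$. For the $\limsup$: suppose for contradiction $\psi_{\Omega_\sigma}(-t)\leq M$ for all large $t$. Then $\Omega_\sigma$ contains points $(\psi_{\Omega_\sigma}(-t)+\epsilon) + i(-t)$ arbitrarily far down with bounded real part; but $\Omega_\sigma\subseteq e^{i\sigma}S_p(\alpha_0,\beta_0)$, whose intersection with $\{\Re(z)\leq M\}$ and $\{\Im(z)\leq -t_1\}$ is empty once $t_1$ is large (the sector's lower edge escapes to $\Re=+\infty$), a contradiction. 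Hence $\psi_{\Omega_\sigma}(-t)\to+\infty$, in particular the $\limsup$ is $+\infty$ (and in fact the $\liminf$ is too, but only the $\limsup$ is claimed).

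\emph{Main obstacle.} The delicate point is Step 2, specifically the forward direction of (ii): showing that when $\sigma=\alpha_\Omega^o$ and $I_{\alpha_\Omega^o}\neq\R$, the domain $\Omega$ is contained not merely in a half-plane with boundary direction $e^{-i\alpha_\Omega^o}$ but in a bona fide \emph{sector} $S_{p_0}(\alpha_\Omega^o,\beta)$ with $\beta<\pi$ and $\beta\geq\beta_\Omega^i$. The half-plane containment comes for free from $I_\sigma\neq\R$; the upper side of the sector comes from $\Omega\subseteq S_p(\alpha_0,\beta_0)$ with $\beta_0<\pi$, so $\beta$ can be taken $\leq\beta_0<\pi$; and $\beta\geq\beta_\Omega^i$ holds because $\Omega$ contains (up to translation) $S_{p}(0,\beta_\Omega^i)$ by definition of the inner upper maximal angle, forcing any sector containing $\Omega$ to have upper opening at least $\beta_\Omega^i$. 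One must be careful that the two half-planes (lower edge direction $e^{-i\alpha_\Omega^o}$, upper edge from $\beta_0$) genuinely intersect in a sector containing $\Omega$ and that, after translating so the vertex is on $\R$, the vertex can be taken in $\R$ — this uses that $\Omega$ is starlike at infinity so the common point of the two edge-lines can be slid to the real axis. I would also double-check the boundary cases $\alpha_\Omega^o=0$ (then case (ii) is about whether $\Omega$ lies in a horizontal half-plane, which interacts with the standing hypothesis $\alpha_\Omega^o+\beta_\Omega^o>0$) to make sure the statement is vacuous or consistent there.
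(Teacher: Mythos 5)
Your Step~2 (the dichotomy for $I_\sigma=\R$) is essentially the paper's own argument, carried out with a bit more care about upgrading the half-plane to a sector with real vertex, and that part is fine. The genuine problems are in Steps~1 and~3. In Step~1 the justification ``since $\Omega_\sigma$ is open and connected and $\Omega$ is starlike at infinity, $\Omega_\sigma$ contains an honest sector of positive opening'' is simply false: the domain $\{x+iy:\ x>y^2\}$ is open, connected and starlike at infinity and contains no angular sector, and the proposition explicitly allows $\alpha_\Omega^i=\beta_\Omega^i=0$. The paper deduces $[a,+\infty)\subset I_\sigma$ from the sectors witnessing the \emph{inner} maximal angles, rotated by $\sigma$, using crucially that $\sigma\ge\alpha_\Omega^i$; you never invoke the hypothesis $\sigma\in[\alpha_\Omega^i,\alpha_\Omega^o]$ at any point, although the claim genuinely depends on it (for $\sigma<\alpha_\Omega^i$ the interval $I_\sigma$ need not contain any right half-line). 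The vaguer first route in Step~1 (``the heights fill an unbounded interval'') is not a proof either, so the half-line claim --- on which your Step~2 also leans to exclude boundedness from above --- is not established.

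In Step~3 the key geometric assertion is wrong as stated: from $\sigma<\alpha_0$ you cannot conclude that the lower edge of $e^{i\sigma}S_p(\alpha_0,\beta_0)$ points into the lower right; that needs $\alpha_0-\sigma<\pi/2$. Since $\alpha_0$ may be close to $\pi$ while $\sigma$ may be as small as $\alpha_\Omega^i$ (possibly $0$), the case $\alpha_0-\sigma\ge\pi/2$ is perfectly possible, and then the left endpoint of $L_{-t}\cap e^{i\sigma}S_p(\alpha_0,\beta_0)$ has real part tending to $-\infty$, so neither the bound $\psi_{\Omega_\sigma}(-t)\ge\delta$ nor your claimed full limit $\psi_{\Omega_\sigma}(-t)\to+\infty$ (stronger than the stated $\limsup$) follows from the outer containment alone. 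The paper instead encloses $\Omega_\sigma$ in rotated sectors whose angles are just above the \emph{outer maximal} angles $\alpha_\Omega^o,\beta_\Omega^o$, not the ambient $\alpha_0,\beta_0$, and only claims a $\limsup$; and what really rules out $\psi_{\Omega_\sigma}(-t)$ staying bounded for all large $t$ is again the inequality $\sigma\ge\alpha_\Omega^i$: a uniform bound $\psi_{\Omega_\sigma}(-t)\le M$, together with the starlikeness of $\Omega$ at infinity, would give $\psi_\Omega(y)\le C+y\tan\sigma$ for all very negative $y$ and hence sectors $S_q(\alpha,0)\subset\Omega$ with $\alpha>\sigma$, contradicting the maximality of $\alpha_\Omega^i$. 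This use of the inner maximal angle is the missing idea in your write-up, and without it Steps~1 and~3 do not go through.
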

\begin{proof}
 We show $(a)$, the proof for $(b)$ is analogous. First, let us prove $(i)$.

 \medskip
Observe that for each $\al < \al_\O^i$ and $\beta<\beta_\O^i$, $\Omega_\sigma$ contains the rotated angular sector $e^{i\sigma}S_z(\al,\beta)$, for some $z\in \C$. In particular, it shows that $\PR(\O_\s)=\{\PR(z)\colon z\in\Omega_\s\}$ is not bounded above, so there exists $a\in \R$ such that $[a,+\infty)\subset I_\sigma$. Now, $I_\sigma \neq \R$ if and only if $\PR(\Omega_\sigma)$ is bounded below, that is, if and only if there exists $y_0 \in \R$ such that $\Omega_\sigma\subset y_0 + i\C_+$. This is equivalent to asking $\Omega \subset z+e^{i\left(\frac{\pi}{2}-\sigma\right)}\C_+$ for some $z\in \C_+$. The definition of $\al_\O^o$ forces $\sigma = \al_\O^o$, and then $I_\sigma \neq \R$ if and only if $\Omega \subset z+e^{i\left(\frac{\pi}{2}-\al_\Omega^o\right)}\C_+$ for some $z\in \C_+$, which is again equivalent to $\Omega \subset S_z(\al_\Omega^o,\beta)$ for $\beta \in [\beta_\O^i,\pi)$ and for some $z\in \C_+$, as claimed.

Now, assume $I_\sigma = \R$. Observe that, there exists $z\in\C$, such that $\Omega_\sigma$ is contained in a rotated sector $e^{i\sigma} S_z(\al,\beta)$ for each $\al>\al_\O^o$ and $\beta>\beta_\O^o$. As a consequence,  for some $t_0>0$ and $\delta >0$ the function $\psi_{\O_\s}$ is greater than $\delta$ in the interval $[t_0,+\infty).$ Moreover, the previous containment also provides that $ \limsup\limits_{t\rightarrow + \infty} \psi_{\Omega_{\sigma}}(-t) = + \infty.$
\end{proof}

In the case where $\al_\O^o+\beta_\O^o = 0$, by Theorem \ref{teorema contenciones general}, it only remains to study the points $\lambda = |\lambda|e^{i\frac{\pi}{2}}$ and $\mu = |\mu|e^{i\frac{3\pi}{2}}$. To this end, it will suffice to consider the defining function of $\Omega$, namely, $\psi_{\Omega}$:
\begin{proposition}\label{proposicion no convexo defining functions no contiene sector}
     Let $(\Fi_t)_{t>0}$ be a parabolic semigroup on $\D$.  Assume that $\Omega \subseteq S_p(\al_0,\beta_0)$ for some $p\in \R$, $\al_0\in [0,\pi),\beta_0\in [0,\pi)$ with $0<\al_0+\beta_0<\pi$. Assume that $\al_\O^o+\beta_\O^o = 0$. Consider $\psi_\O\colon I \rightarrow [-\infty,\infty)$ the defining function of $\Omega.$ Then, 
     \begin{enumerate}
         \item [(a)] $I=\R$ if and only if either
         \begin{itemize}
             \item[i)]$\Omega$ is not contained in any sector $S_{z_0}(\al,\beta)$ with $\al=0$ and $\beta \in (0,\pi)$ or,
             \item[ii)]$\beta=0$ and $\al \in (0,\pi)$.
         \end{itemize} In such a case, there exist $t_0$ and $\delta>0$  such that $\psi_{\Omega}(t) \geq \delta$ for all $|t|\geq t_0$ and 
         $$
         \lim\limits_{t\rightarrow + \infty} \frac{\psi_{\Omega}(-t)}t = \lim\limits_{t\rightarrow + \infty} \frac{\psi_{\Omega}(t)}t=+\infty.
         $$
	\item [(b)] $I= (a,+\infty)$ if and only if $\Omega$ is contained in a sector $S_{z_0}(0,\beta)$ for some $z_0\in\R$ and $\beta \in (0,\pi)$. In such a case, there exist $t_0, \delta  >0$ such that $\psi_\O(t) \geq \delta$  for all $t\geq t_0$ and 
    $$
    \lim\limits_{t\rightarrow+\infty} \frac{\psi_\O(t)}t = + \infty.
    $$
\item [(c)] $I= (-\infty,b)$ if and only if $\Omega$ is contained in a sector $S_{z_0}(\al,0)$ for some $z_0\in\R$ and $\al \in (0,\pi)$. In such a case, there exists $t_0, \delta  >0$ such that $\psi_\O(-t) \geq \delta$  for all $t\leq -t_0$ and 
$$
\lim\limits_{t\rightarrow+\infty} \frac{\psi_\O(-t)}t = + \infty.
$$
     \end{enumerate}
\end{proposition}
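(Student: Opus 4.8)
The plan is to transfer everything to the defining function $\psi_\Omega$ and the interval $I$. I will use two general facts. First, since the semigroup is parabolic, $I$ is an \emph{unbounded} open interval, so $I$ is one of $\R$, $(a,+\infty)$, $(-\infty,b)$, according to whether $\Im(\Omega)$ is unbounded, bounded only from below, or bounded only from above; and, because $\Omega$ sits inside the \emph{proper} sector $S_p(\al_0,\beta_0)$ with $\al_0+\beta_0<\pi$, one has $\psi_\Omega(y)\ge\psi_{S_p(\al_0,\beta_0)}(y)>-\infty$ for all $y\in I$ with a continuous minorant, so $\psi_\Omega$ is bounded below on bounded subintervals of $I$ and near a finite endpoint of $I$. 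Second, for $\al,\beta\in(0,\pi)$ and $q\in\R$ one checks directly --- writing the relevant half-plane as $\{x+iy:\ x+y\cot\al>q\}$, resp. $\{x+iy:\ x-y\cot\beta>q\}$, and recalling $\Omega=\{x+iy:\ y\in I,\ x>\psi_\Omega(y)\}$ --- that $\Omega\subseteq q+e^{i(\pi/2-\al)}\C_+$ if and only if $\psi_\Omega(y)+y\cot\al\ge q$ for all $y\in I$, and $\Omega\subseteq q+e^{-i(\pi/2-\beta)}\C_+$ if and only if $\psi_\Omega(y)-y\cot\beta\ge q$ for all $y\in I$.

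Next I read off the growth of $\psi_\Omega$. Since $\al_\O^o=0$ is an infimum, there are $\al_n\downarrow 0$ and reals $q_n$ with $\psi_\Omega(y)\ge q_n-y\cot\al_n$ for all $y\in I$; hence, if $\inf I=-\infty$, putting $y=-t$ gives $\psi_\Omega(-t)/t\ge\cot\al_n+q_n/t$, so $\liminf_{t\to+\infty}\psi_\Omega(-t)/t\ge\cot\al_n$ for every $n$ and therefore $\lim_{t\to+\infty}\psi_\Omega(-t)/t=+\infty$. Symmetrically, $\beta_\O^o=0$ yields, whenever $\sup I=+\infty$, that $\lim_{t\to+\infty}\psi_\Omega(t)/t=+\infty$. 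In either case $\psi_\Omega\to+\infty$ at the corresponding end, which gives the claimed estimates $\psi_\Omega\ge\delta$ off a compact set.

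Now I prove (b); (c) is proved the same way, interchanging $\al_\O^o,\inf I$ with $\beta_\O^o,\sup I$, and (a) then follows by elimination. If $\Omega\subseteq S_{p_0}(0,\beta)$ for some $\beta\in(0,\pi)$, then $\Omega$ lies in the open horizontal half-plane above the apex, so $\inf I>-\infty$; parabolicity forbids $\sup I<+\infty$, hence $I=(a,+\infty)$. Conversely, if $I=(a,+\infty)$ then $\sup I=+\infty$, so $\psi_\Omega(t)/t\to+\infty$ by the previous step; since $\psi_\Omega$ is also bounded below near $y=a$ and on bounded subintervals, the number $c_0:=-\inf_{y>a}\bigl(\psi_\Omega(y)-(y-a)\bigr)$ is finite, and then $\Omega\subseteq S_{q}(0,\pi/4)$ for $q=-c_0+ia$, because the defining function of that sector is $y\mapsto -c_0+(y-a)$ on $(a,+\infty)$. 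This establishes the equivalence in (b), and the growth assertions were obtained above. Finally, (b) and (c) cannot both hold (their conclusions would force $I$ bounded, against parabolicity), so exactly one of the three shapes of $I$ occurs; hence $I=\R$ exactly when $\Omega$ is contained in no sector $S_{p_0}(0,\beta)$ and in no sector $S_{p_0}(\al,0)$ with $\al,\beta\in(0,\pi)$, which is (a), and in that case $\inf I=-\infty$ and $\sup I=+\infty$ supply both limits and the bound $\psi_\Omega\ge\delta$ for $|y|\ge t_0$.

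The routine ingredient is the half-plane / defining-function dictionary in the first paragraph. The delicate point is the converse half of (b) and (c): one must simultaneously extract the \emph{superlinear} growth of $\psi_\Omega$ from the fact that $\al_\O^o=\beta_\O^o=0$ are infima, and control $\psi_\Omega$ near the finite endpoint of $I$ --- and it is precisely here that the standing hypothesis $\Omega\subseteq S_p(\al_0,\beta_0)$ with $\al_0+\beta_0<\pi$ (a \emph{proper} sector) is used, to prevent $\psi_\Omega$ from dropping to $-\infty$ at that endpoint.
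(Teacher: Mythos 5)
Your proof is correct, and it takes a more explicit route than the paper's own, rather terse, argument. The paper obtains the trichotomy for $I$ directly from parabolicity, proves the superlinear growth by contradiction (a sequence with $\psi_\Omega(t_n)\le Ct_n$ would eventually leave every thin sector $S_{p_\varepsilon}(\varepsilon)$, which must contain $\Omega$ because $\al_\O^o+\beta_\O^o=0$), and refers the equivalences with the sector containments back to ``the same ideas'' as Proposition \ref{proposicion no convexo defining functions}. You instead make the half-plane/defining-function dictionary explicit ($\Omega\subseteq q+e^{i(\pi/2-\al)}\C_+$ iff $\psi_\Omega(y)+y\cot\al\ge q$ on $I$), read off the growth directly and quantitatively ($\liminf_{t\to+\infty}\psi_\Omega(\mp t)/t\ge\cot\al_n$, resp.\ $\cot\beta_n$, with $\al_n,\beta_n\downarrow 0$), and in the converse halves of (b) and (c) you construct a containing sector by hand (vertex $-c_0+ia$, opening $\pi/4$), deducing (a) by elimination; this yields a self-contained argument at the cost of the bookkeeping with $c_0$, whereas the route the paper has in mind for the converse is even shorter and needs no growth estimate: intersect $\{\Im z>a\}$ with the half-plane bounding $S_p(\al_0,\beta_0)$ from above, and this intersection is itself a sector with lower angle $0$. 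One caveat: your vertex $-c_0+ia$ is in general not real, while the statement writes $p_0\in\R$; note, however, that under the paper's standing normalization $0\in\Omega$ no sector $S_{p_0}(0,\beta)$ with real vertex can contain $\Omega$ at all (such a sector lies in $\{\Im z>0\}$), so the statement must be read with $p_0\in\C$, as in other places of the paper, and that is exactly the version your construction proves.
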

\begin{proof}
    Note that, since $\Omega$ is parabolic, $I$ must be connected and unbounded. This yields the three stated situations. 
    
    For the proof of the limits, we show that, under the assumption $I=\R$, $$\lim\limits_{t\rightarrow + \infty} \frac{\psi_{\Omega}(t)}t=+\infty.$$ The proof of the other cases is analogous. Assume to the contrary that there exists a sequence $(t_n)_{n\in \N}$ with $t_n\rightarrow+\infty$, $n\rightarrow+\infty$ and such that
    $\frac{\psi_\O(t_n)}{t_n}\leq C $ for some $C>0$. In particular, $\psi_\O(t_n) \leq Ct_n$, which implies that the sequence $\psi_\O(t_n)+it_n$ does not belong to the angular sector $S(\arctan(C))$. However, since $\al_\O^o +\beta_\O^o = 0$, $\Omega$ is contained in all sectors $S_{p_\varepsilon}(\varepsilon)$ with $\varepsilon>0$ and $p_\varepsilon$ a suitable point in $\R$. This leads to a  contradiction.
    
    The rest of the proof follows the same ideas as the proof of Proposition \ref{proposicion no convexo defining functions} with the obvious modifications.
\end{proof}

From this point, we will distinguish between the cases in which $\al_\O^o+\beta_\O^o >0$ and  $\al_\O^o+\beta_\O^o =0$.

\medskip
We begin with the case $\al_\O^o+\beta_\O^o >0$. In the next result, through a careful estimate of the harmonic measure, we obtain sufficient conditions to ensure the membership of $e^{\lambda h}$ to $H^p$:

\begin{theorem}\label{teorema no convexo por arriba}
     Let $(\Fi_t)_{t\geq0}$ be a parabolic semigroup on $\D$. Suppose that $\Omega \subseteq S_p(\al_0,\beta_0)$ for some $p\in \R$, $\al_0\in [0,\pi),\beta_0\in [0,\pi)$, with $\alpha_0+\beta_0\leq\pi$. Assume further that $\al_\O^o+\beta_\O^o >0$.
     \begin{enumerate}
         \item [(i)] Let $\sigma \in [\al_\O^i,\al_\O^o]$ and $\lambda =|\lambda|e^{i\left(\frac{\pi}{2}+\sigma\right)}$.  Then, $\lambda \in \esp$ if one of the following holds:
         \begin{enumerate}
             \item $\dom(\psi_{\Omega_\s})\neq \R$.
             \item There exists $\varepsilon>0$ such that
\begin{equation}\label{integral por arriba}
     \int_{t_0}^\infty e^{p|\lambda|t}\left(\frac{t}{\psi_{\O_\sigma}(-t)}\right)^{\frac{\pi}{\sigma+\beta_\O^o+\varepsilon}}dt < \infty.
\end{equation}             
         \end{enumerate}
         \item [(ii)]  Let $\nu \in [\b_\O^i,\b_\O^o]$ and $\lambda =|\lambda|e^{i\left(\frac{3\pi}{2}-\nu\right)}$.  Then, $\lambda \in \esp$ if one of the following holds:
         \begin{enumerate}
             \item $\dom(\psi_{\Omega_\nu})\neq \R$.
             \item There exists $\varepsilon>0$ such that 
$$
    \int_{t_0}^\infty e^{p|\lambda|t}\left(\frac{t}{\psi_{\O_\nu}(t)}\right)^{\frac{\pi}{\al_\O^o+\nu+\varepsilon}}dt < \infty.
    $$            
         \end{enumerate}
      
     \end{enumerate}
\end{theorem}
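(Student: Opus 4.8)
The strategy is to convert $e^{\lambda h}\in H^p$ into the finiteness of the harmonic-measure integral of Corollary~\ref{corolario formula norma} for the rotated domain $\Omega_\sigma=e^{i\sigma}\Omega$, and then to estimate that harmonic measure using the geometry recorded by $\psi_{\Omega_\sigma}$. I treat part $(i)$; part $(ii)$ is its mirror image (replace $\Omega$ by $\overline\Omega$, or equivalently the role of $\beta_\O^o$ by $\al_\O^o$ and the half-plane defining it), so I would not repeat it.

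First I would assume $\lambda\neq0$ (for $\lambda=0$ there is nothing to prove). Since $\Omega$ lies in a sector of opening $<\pi$, Proposition~\ref{proposicion Smirnov} gives $e^{\lambda h}\in\mathcal N^+$, so Corollary~\ref{corolario formula norma} applies. Writing $\lambda=|\lambda|e^{i(\pi/2+\sigma)}$ and using $e^{i(\pi/2+\sigma)}\Omega=i\Omega_\sigma$ together with the rotation invariance of harmonic measure (the map $w\mapsto iw$ fixes $0$ and sends $\{\Re(\cdot)>t\}$ to $\{\Im(\cdot)<-t\}$), Corollary~\ref{corolario formula norma} becomes
\begin{equation*}
\norm{e^{\lambda h}}_{H^p}^p=\frac{1}{2\pi}\int_{-\infty}^{\infty}e^{p|\lambda|t}\,\omega\bigl(0,E_t,\Omega_\sigma\bigr)\,dt,\qquad E_t:=\{\zeta\in\partial\Omega_\sigma:\Im(\zeta)<-t\}.
\end{equation*}
Since $\omega\leq1$ and $|\lambda|>0$, the part $\int_{-\infty}^{t_0}$ is finite, and everything reduces to the tail $\int_{t_0}^{\infty}e^{p|\lambda|t}\,\omega(0,E_t,\Omega_\sigma)\,dt$. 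If $\dom(\psi_{\Omega_\sigma})\neq\R$, Proposition~\ref{proposicion no convexo defining functions}(a) says $I_\sigma$ is bounded below, hence $\overline{\Omega_\sigma}$ is bounded below in the imaginary direction and $E_t=\varnothing$ for all large $t$; the tail vanishes and $\lambda\in\esp$. This settles case~(a).

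For case~(b), set $P_t:=\psi_{\Omega_\sigma}(-t)$ (so $P_t\geq\delta>0$ for $t\geq t_0$). The heart of the matter is the estimate
\begin{equation*}
\omega(0,E_t,\Omega_\sigma)\ \lesssim_{\varepsilon}\ \bigl(t/P_t\bigr)^{\pi/(\sigma+\beta_\O^o+\varepsilon)}\qquad(t\text{ large}).
\end{equation*}
Granting it, splitting the tail at a large $t_1$ and bounding the integrand by a constant on $[t_0,t_1]$, hypothesis~\eqref{integral por arriba} yields $\int_{t_0}^\infty e^{p|\lambda|t}\omega(0,E_t,\Omega_\sigma)\,dt<\infty$, so $e^{\lambda h}\in H^p$. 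Here one may assume $\varepsilon$ so small that $\sigma+\beta_\O^o+\varepsilon<\pi$, since finiteness of the integral in \eqref{integral por arriba} persists as $\varepsilon$ decreases: on $\{t\geq t_0:P_t\leq t\}$ the integrand dominates $e^{p|\lambda|t}$ and already contributes a finite amount, while on $\{P_t>t\}$ it only decreases.

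To prove the estimate I would argue geometrically. Since $E_t\subset\{\Im<-t\}$ while $0\in\{\Im>-t\}$, restricting $\omega(\cdot,E_t,\Omega_\sigma)$ to $\Omega_\sigma\cap\{\Im>-t\}$ and comparing boundary values (maximum principle) gives $\omega(0,E_t,\Omega_\sigma)\leq\omega\bigl(0,\overline{\Omega_\sigma}\cap\{\Im=-t\},\Omega_\sigma\cap\{\Im>-t\}\bigr)$, and $\overline{\Omega_\sigma}\cap\{\Im=-t\}\subset[P_t,\infty)\times\{-t\}$ by the description of $\psi_{\Omega_\sigma}$ in the Remark preceding Proposition~\ref{proposicion no convexo defining functions}. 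With $\beta:=\beta_\O^o+\varepsilon>\beta_\O^o$, the definition of $\beta_\O^o$ furnishes $q\in\R$ with $\Omega\subset q+e^{-i(\pi/2-\beta)}\C_+=:H$, so $\Omega_\sigma\subset e^{i\sigma}H=:H'$, a half-plane whose boundary line has direction $e^{i(\sigma+\beta)}$; consequently $H'\cap\{\Im>-t\}$ is a sector $S_{P^*}(0,\sigma+\beta)$ with one edge on $\{\Im=-t\}$ and vertex $P^*$ satisfying $\Re(P^*)\leq P_t$ and $|P^*|\asymp t$ for large $t$, and it contains both $0$ and $\Omega_\sigma\cap\{\Im>-t\}$. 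Monotonicity of harmonic measure then gives $\omega(0,E_t,\Omega_\sigma)\leq\omega\bigl(0,[P_t,\infty)\times\{-t\},S_{P^*}(0,\sigma+\beta)\bigr)$, and the right-hand side, computed explicitly via the conformal map $w\mapsto(w-P^*)^{\pi/(\sigma+\beta)}$ onto $\Ha$ exactly as in Proposition~\ref{lematecnico}, is $\lesssim(t/P_t)^{\pi/(\sigma+\beta)}$ once one invokes $|P^*|\asymp t$, $\Re(P^*)\leq P_t$, and the trivial bound $\omega\leq1$ in the cheap regime $P_t\lesssim t$. Since $\sigma+\beta=\sigma+\beta_\O^o+\varepsilon$, this is the claim. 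The main obstacle is precisely this geometric estimate: one must select the right comparison sector — of opening $\sigma+\beta_\O^o+\varepsilon$, obtained by rotating the half-plane realizing $\beta_\O^o$ by $\sigma$ and truncating at height $-t$ — and then pin down the location of its vertex $P^*$ (namely $|P^*|\asymp t$ with constants depending only on $\sigma+\beta_\O^o+\varepsilon$, and $\Re(P^*)\leq\psi_{\Omega_\sigma}(-t)$) precisely enough that the explicit half-plane formula produces the exponent $\pi/(\sigma+\beta_\O^o+\varepsilon)$ with a constant independent of $t$.
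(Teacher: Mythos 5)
Your proposal follows essentially the same route as the paper: reduce to the harmonic--measure formula of Corollary \ref{corolario formula norma}, settle the case $\dom(\psi_{\Omega_\sigma})\neq\R$ geometrically, and in the main case bound $\omega(0,E_t,\cdot)$ by the harmonic measure of a ray in a comparison sector of opening $\sigma+\beta_\O^o+\varepsilon$ built from the half-plane realizing $\beta_\O^o+\varepsilon$, compute it via the explicit sector formula, and integrate after splitting according to whether $\psi_{\Omega_\sigma}(-t)\gtrsim t$. The only differences are cosmetic (you work in $\Omega_\sigma$ with $\{\Im<-t\}$ rather than in $e^{i(\pi/2+\sigma)}\Omega$ with $\{\Re>t\}$, and your vertex $P^*$ replaces the paper's translated vertex $\zeta_t$), and your treatment of case (a) via $E_t=\varnothing$ is a slightly more direct variant of the paper's half-plane argument.

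Two intermediate claims are false as written, though both are repairable with ingredients you already state. First, the inclusion $\overline{\Omega_\sigma}\cap\{\Im=-t\}\subset[P_t,\infty)\times\{-t\}$ does not follow from the description of $\psi_{\Omega_\sigma}$: that infimum is taken over points of the \emph{open} domain, and since $\psi_{\Omega_\sigma}$ is merely upper semicontinuous, the closure can meet the line $\{\Im=-t\}$ in slit-type boundary points with real part strictly less than $\psi_{\Omega_\sigma}(-t)$ --- exactly the non-convex phenomenon this theorem must allow. The repair is immediate: such points lie in $\partial\Omega_\sigma\setminus E_t$, where $\omega(\cdot,E_t,\Omega_\sigma)$ has boundary value $0$, so they may be dropped from the comparison set; this is why the paper phrases the maximum-principle step as $\partial(S_t\cap\Omega_\theta)\subseteq F_t\cup(\partial\Omega_\theta\setminus E_t)$ rather than asserting a containment of the full cross-section. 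Second, ``finiteness of \eqref{integral por arriba} persists as $\varepsilon$ decreases'' is not true in general: on $\{t:\psi_{\Omega_\sigma}(-t)\le t\}$ the integrand increases as $\varepsilon$ decreases, and one can build upper semicontinuous defining functions (dipping to size comparable to $t$ on tiny intervals) for which the smaller-$\varepsilon$ integral diverges while the original converges. What is true, and all the proof needs, is the split your own sentence supplies: on $\{\psi_{\Omega_\sigma}(-t)\le t\}$ use $\omega\le 1$ together with $\int e^{p|\lambda|t}\,dt<\infty$ over that set (forced by the hypothesis, since there the hypothesis integrand dominates $e^{p|\lambda|t}$), and on $\{\psi_{\Omega_\sigma}(-t)>t\}$ use your estimate with a small $\varepsilon$ plus the monotonicity $(t/\psi_{\Omega_\sigma}(-t))^{\pi/(\sigma+\beta_\O^o+\varepsilon)}\le(t/\psi_{\Omega_\sigma}(-t))^{\pi/(\sigma+\beta_\O^o+\varepsilon_0)}$. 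State it as a split of the harmonic-measure integral (as the paper does with its set $A$), not as monotonicity of \eqref{integral por arriba}; with these two rephrasings your argument is complete and coincides with the paper's.
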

\begin{remark}
		Recall that the real number $t_0$ appearing  in the previous result is the one provided in Proposition \ref{proposicion no convexo defining functions}. By the properties stated therein, observe that $\psi_{\Omega_\s}(-t)$ and $\psi_{\Omega_\nu}(t)$ are bounded below by some $\delta >0$ for all $t\geq t_0$, so the convergence behaviour of the integrals depends only on the growth of the functions $\psi_{\Omega_\s}(-t)$ or $\psi_{\Omega_\nu}(t)$ as $t\rightarrow+\infty$.
	\end{remark}
\noindent \textit{Proof of Theorem \ref{teorema no convexo por arriba}}.  We show $(i)$, the proof for $(ii)$ is analogous. 
    
    If $\dom(\psi_{\Omega_\s}) \neq \R$, then by Proposition \ref{proposicion no convexo defining functions}, $\s = \al_\O^o$ and $\Omega$ is contained in a sector $S_p(\al_\O^o,\beta)$ for some $p\in \R$ and $\beta \in [\b_\O^i,\pi).$ Then, $\lambda\Omega$ is contained in a left half-plane and Proposition \ref{proposicion contenido semiplano} yields that $\lambda \in \esp$, as desired.
    
    Assume now that $\dom(\psi_{\Omega_\s})=\R$. First, recall that, by hypothesis, we can apply Corollary \ref{corolario formula norma} to obtain
    $$
    \norm{e^{\lambda h}}_{H^p} = \frac{p|\lambda|}{2\pi} \int_{-\infty}^\infty e^{p|\lambda|t}\omega(0, E_t,\Omega_\theta)dt,
    $$
    where $E_t = \{\xi \in \partial \Omega_\theta: \PR(\xi) >t\}$ and $\theta = \frac{\pi}{2}+\sigma$. 
    
    Let $\varepsilon \in (0,\frac\pi2)$ be chosen such that $0<\sigma+\beta_\O^o+\varepsilon<\pi$ and take  $t\geq t_0$, for some $t_0$ large enough. Observe that $$\inf \{\Impart(\xi) : \xi \in \Omega_\theta, \ \PR(\xi) = t\} = \psi_{\Omega_\sigma}(-t).$$    By the definition of $\beta_\O^o$, there exists $p \in \C$ such that $\ell_p(\beta_\O^o+\varepsilon)\cap \Omega = \varnothing$ (see \eqref{notacionlados}), which implies that $\ell_{-iy_0}
    (\frac{\pi}{2}+\sigma+\beta_\O^o+\varepsilon)\cap \Omega_\theta = \varnothing$ for some $y_0>0$. Now, consider the set
    $$
    S= e^{i\frac{\pi}{2}}S(0,\sigma+\beta_\O^o+\varepsilon).
    $$
    Define also $S_t := \z_t +S$, where
    $$\z_t = t-i(y_0+t\tan(\tilde{\theta})) \qquad (t\geq t_0),$$ with
    \begin{equation*}
		\tilde{\theta} = \left\{\begin{matrix}
			\sigma+\beta^o_\Omega + \varepsilon & \textnormal{if } \sigma+\beta^o_\Omega + \varepsilon < \pi/2, \\
			\varepsilon & \textnormal{if } \sigma+\beta_\Omega^o + \varepsilon \geq \pi/2.
		\end{matrix}\right.
	\end{equation*}
    Observe that $\partial S_t = \ell_{\z_t}\left(\frac{\pi}{2}\right)\cup \ell_{\z_t}\left(\frac{\pi}{2}+\sigma+\beta_\O^o+\varepsilon\right)$. By construction, the second half-line does not intersect $\Omega_\theta$, while the first one may intersect in infinitely many points. Notice also that the infimum of the imaginary parts of these intersection points is precisely $\psi_{\Omega_\sigma}(-t)$. 

    Now, define $F_t= \{ t+iy\in  \partial S_t : y > \psi_{\Omega_\sigma}(-t)\}$ for all $t\geq t_0$ and observe that $\partial(S_t\cap \Omega_\theta) \subseteq F_t\cup (\partial \Omega_\theta \setminus E_t).$ As a consequence, applying the Maximum Principle, we deduce that
    \begin{equation}\label{pmax}
         \omega(0,E_t,\Omega_\theta) \leq \omega(0,F_t,S_t) 
    \end{equation}
    for all $t\geq t_0$. By the translation invariance of the harmonic measure, we have that
    $$
    \omega(0,F_t,S_t) = \omega(-\z_t,F_t-\z_t,S),
    $$
    where $F_t-\zeta_t = \{z\in \C: \PR(z) = 0,\Im(z) > \psi_{\Omega_\s}(-t)+y_0+\tan(\tilde{\theta})\}.$ Then, applying Corollary \ref{corolario medida armonica imaginaria},  we have that, for $t\geq t_0$,

    $$
     \omega(0,F_t,S_t) = \frac{1}{2\pi}\arctan\left(\frac{2T_t}{T_t^2-1}\right)$$
     for all $T_t > 1$, where 
 $$
	 T_t = \frac{(\psi_{\O_\s}(-t)+y_0+\tan(\tilde{\theta}))^{\frac{\pi}{\sigma+\b_\O^o+\varepsilon}}-|\zeta_t|^{\frac{\pi}{\sigma+\b_\O^o+\varepsilon}}\cos(\kappa_t)}{|\zeta_t|^{\frac{\pi}{\sigma+\b_\O^o+\varepsilon}}\sin(\kappa_t)},
	 $$
	 and $\kappa_t = (\arg(-\zeta_t)-\frac{\pi}{2})\frac{\pi}{\sigma+\b_\O^o+\varepsilon}$. Recall that $|\zeta_t|=t\sqrt{1+(y_0/t+\tan(\tilde{\theta}))^2}$ and that $$\kappa_t \rightarrow (\frac{\pi}{2}-\tilde{\theta})\frac{\pi}{\sigma+\b_\O^o+\varepsilon},\quad \text{as }t\rightarrow + \infty.$$ As a consequence, for all $t\geq t_1$
     $$T_t \gtrsim \left(\frac{\psi_{\O_\s}(-t)}{t} \right)^{\frac{\pi}{\al_\Omega+\beta_\Omega+\varepsilon}},$$ where $t_1\geq t_0$. Moreover, there exists $c>0$ such that  $T_t >1$ for all $t\geq t_1$ satisfying $\psi_{\Omega_\sigma}(-t) \geq ct$. Thus, denoting
     $$
     A=\{ x \in [t_1,+\infty) : \psi_{\Omega_\s}(-x) \geq cx\}\subset [t_1,+\infty),
     $$
     we have
  \begin{equation*}
      \begin{split}
          & \int_A  e^{p|\lambda|t}\left(\frac{t}{\psi_{\O_\sigma}(-t)}\right)^{\frac{\pi}{\sigma+\beta_\O^o+\varepsilon}}dt  + \int_{[t_1,+\infty)\setminus A} e^{p|\lambda|t}\left(\frac{t}{\psi_{\O_\sigma}(-t)}\right)^{\frac{\pi}{\sigma+\beta_\O^o+\varepsilon}}dt
          \\ &= \int_{t_1}^\infty e^{p|\lambda|t}\left(\frac{t}{\psi_{\O_\sigma}(-t)}\right)^{\frac{\pi}{\sigma+\beta_\O^o+\varepsilon}}dt < \infty.
      \end{split}
  \end{equation*}
  Since the second integral on $[t_1,+\infty)\setminus A$ is finite, we have that
  $$
  +\infty > \int_{[t_1,+\infty)\setminus A} e^{p|\lambda|t}\left(\frac{t}{\psi_{\O_\sigma}(-t)}\right)^{\frac{\pi}{\sigma+\beta_\O^o+\varepsilon}} \geq \frac{1}{c^\frac{\pi}{\sigma+\beta_\O^o+\varepsilon}}\int_{[t_1,+\infty]\setminus A}e^{p|\lambda| t} dt.
  $$
Recalling that the harmonic measure is a probability measure and using \eqref{pmax}, there exists $C>0$ such that
$$
\norm{e^{\lambda h}} \leq C + \int_A e^{p|\lambda|t} \omega(0,F_t,S_t)dt+ \int_{[t_1,+\infty)\setminus A} e^{p|\lambda|t}dt.
$$
Hence, $\norm{e^{\lambda h}}_{H^p} < \infty$ if  
\begin{equation}\label{integral caso no convexo}
    \int_A e^{p|\lambda|t} \omega(0,F_t,S_t)dt<+\infty.
\end{equation}
Therefore, it remains to show that the previous integral converges. Recall that $T_t >1$ for all $t\in A$, so we have that
$$\omega(0,F_t,S_t) = \frac{1}{2\pi}\arctan\left(\frac{2T_t}{T_t^2-1}\right).$$ Noticing that $\arctan(x)\leq x$ for all $x\in \R$, we have for all $t\geq t_1$, 
$$
\omega(0,F_t,S_t) \lesssim \frac{1}{T_t} \lesssim \left(\frac{t}{\psi_{\O_\sigma}(-t)}\right)^{\frac{\pi}{\sigma+\beta_\O^o+\varepsilon}}.
$$
Using this in \eqref{integral caso no convexo} and then applying the hypothesis, we conclude that  the integral in \eqref{integral caso no convexo} is indeed finite. This concludes the proof.  \hfill $\Box$

\medskip

The next result establishes some geometric conditions on $\Omega$ for which it is possible to completely characterize the point spectrum of the infinitesimal generator $\Delta$. In order to ease the reading, set $\psi_{\al_\O^i} := \psi_{\O_{\al_\O^i}}$ and $\psi_{\b_\O^i} := \psi_{\O_{\b_\O^i}}$
\begin{corollary}\label{corolario no convexo contiene sector}
      Let $(\Fi_t)_{t>0}$ be a parabolic semigroup on $\D$. Suppose that $\Omega \subseteq S_p(\al_0,\beta_0)$ for some $p\in \R$, $\al_0\in [0,\pi)$, and $\beta_0\in [0,\pi)$, with $\alpha_0+\beta_0\leq\pi$.  Assume that $\al_\O^o+\beta_\O^o > 0$ and
              \begin{equation}\label{integral sin epsilon}
                   \int_{t_0}^\infty e^{p|\lambda|t} \left( \frac1{\psi_{\al_\O^i}(-t)}\right)^{\frac{\pi}{\al_\O^i+\beta_\O^o}} dt < \infty \quad \textnormal{and} \quad  \int_{t_0}^\infty e^{p|\lambda|t}\left(\frac1{\psi_{\b_\O^i}(t)}\right)^{\frac{\pi}{\al_\O^o+\beta_\O^i}} dt < \infty
              \end{equation}
    for all $|\lambda|>0$. Then,
          $$\esp = \left\{re^{i\theta}:r\geq 0, \frac{\pi}{2}+\al_\O^i \leq \theta \leq \frac{3\pi}{2}-\beta_\O^i  \right\}.$$
\end{corollary}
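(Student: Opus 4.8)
The plan is to combine the containment relations from Theorem \ref{teorema contenciones general} with the sufficient conditions for membership in $\esp$ provided by Theorem \ref{teorema no convexo por arriba}, and then to "fill in" the remaining sectorial region using the star-shapedness (Lemma \ref{lema conexion}) and the sum property (Proposition \ref{proposicion suma espectros}). Concretely, Theorem \ref{teorema contenciones general} already gives the two inclusions
\[
\left\{re^{i\theta}: r\geq 0,\ \tfrac{\pi}{2}+\al_\O^o<\theta<\tfrac{3\pi}{2}-\beta_\O^o\right\}\subseteq \esp\subseteq \left\{re^{i\theta}: r\geq 0,\ \tfrac{\pi}{2}+\al_\O^i\leq\theta\leq\tfrac{3\pi}{2}-\beta_\O^i\right\},
\]
so it only remains to decide the membership of the "gap rays" $\lambda=|\lambda|e^{i(\frac{\pi}{2}+\sigma)}$ with $\sigma\in[\al_\O^i,\al_\O^o]$ and $\mu=|\mu|e^{i(\frac{3\pi}{2}-\nu)}$ with $\nu\in[\b_\O^i,\b_\O^o]$, and to show the claimed set is closed (which it is, being a closed sector).

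First I would treat the extreme inner rays. For $\lambda=|\lambda|e^{i(\frac{\pi}{2}+\al_\O^i)}$, I apply Theorem \ref{teorema no convexo por arriba}(i): if $\dom(\psi_{\Omega_{\al_\O^i}})\neq\R$ then $\lambda\in\esp$ immediately; otherwise, choose $\varepsilon>0$ small and compare the integral \eqref{integral por arriba}, which is $\int_{t_0}^\infty e^{p|\lambda|t}(t/\psi_{\al_\O^i}(-t))^{\pi/(\al_\O^i+\beta_\O^o+\varepsilon)}\,dt$, with the hypothesis \eqref{integral sin epsilon}. Since $\psi_{\al_\O^i}$ is bounded below by $\delta>0$ on $[t_0,\infty)$ (Remark after Theorem \ref{teorema no convexo por arriba}), the factor $t^{\pi/(\al_\O^i+\beta_\O^o+\varepsilon)}$ only contributes a polynomial factor which is absorbed by the exponential $e^{p|\lambda|t}$, and for $\varepsilon$ small enough the exponent $\pi/(\al_\O^i+\beta_\O^o+\varepsilon)$ is close to $\pi/(\al_\O^i+\beta_\O^o)$; a routine comparison — using that $e^{p|\lambda|t}$ decays (note $|\lambda|>0$ and the integrand of \eqref{integral sin epsilon} is finite for all $|\lambda|>0$, forcing control of $1/\psi_{\al_\O^i}$) — shows \eqref{integral por arriba} converges. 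Hence $|\lambda|e^{i(\frac{\pi}{2}+\al_\O^i)}\in\esp$ for every $|\lambda|>0$. Symmetrically, Theorem \ref{teorema no convexo por arriba}(ii) together with the second integral in \eqref{integral sin epsilon} gives $|\mu|e^{i(\frac{3\pi}{2}-\b_\O^i)}\in\esp$ for all $|\mu|>0$.

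Next I would propagate these boundary rays across the whole sector. Having $|\lambda|e^{i(\frac{\pi}{2}+\al_\O^i)}\in\esp$ for all $|\lambda|>0$, Proposition \ref{proposicion suma espectros}(i), inclusion \eqref{contencion suma 2} with $\sigma=\al_\O^i$, yields $\{re^{i\theta}: r\geq 0,\ \frac{\pi}{2}+\al_\O^i\leq\theta<\frac{3\pi}{2}-\beta_\O^o\}\subseteq\esp$; similarly Proposition \ref{proposicion suma espectros}(ii) with $\nu=\b_\O^i$ gives $\{re^{i\theta}: r\geq 0,\ \frac{\pi}{2}+\al_\O^o<\theta\leq\frac{3\pi}{2}-\b_\O^i\}\subseteq\esp$. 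The union of these two sets, together with the open-sector inclusion from Theorem \ref{teorema contenciones general}, already covers $\{re^{i\theta}: r\geq 0,\ \frac{\pi}{2}+\al_\O^i\leq\theta\leq\frac{3\pi}{2}-\b_\O^i\}$ except possibly for the two corner rays $\theta=\frac{\pi}{2}+\al_\O^i$ with large modulus and $\theta=\frac{3\pi}{2}-\b_\O^i$ with large modulus — but those were handled in the previous step for all moduli. Combined with the reverse inclusion $\esp\subseteq\{re^{i\theta}: \frac{\pi}{2}+\al_\O^i\leq\theta\leq\frac{3\pi}{2}-\b_\O^i\}$ from Theorem \ref{teorema contenciones general}, equality follows.

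The main obstacle I anticipate is the passage from the "$\varepsilon$-free" hypothesis \eqref{integral sin epsilon} to the "$\varepsilon$-perturbed" convergence \eqref{integral por arriba} required to invoke Theorem \ref{teorema no convexo por arriba}: one must check that enlarging the exponent denominator by $\varepsilon$ and inserting the extra polynomial weight $t^{\pi/(\al_\O^i+\beta_\O^o+\varepsilon)}$ does not destroy convergence. This is where the lower bound $\psi_{\al_\O^i}(-t)\geq\delta>0$ and the strict exponential decay $e^{p|\lambda|t}$ (valid because $|\lambda|>0$, and we only need the statement for $|\lambda|>0$ since $0\in\esp$ trivially) are essential: the exponential dominates any polynomial, and writing $(t/\psi_{\al_\O^i}(-t))^{\pi/(\al_\O^i+\beta_\O^o+\varepsilon)} = t^{\pi/(\al_\O^i+\beta_\O^o+\varepsilon)}\cdot(1/\psi_{\al_\O^i}(-t))^{\pi/(\al_\O^i+\beta_\O^o+\varepsilon)}$ and bounding the second factor by a constant times $(1/\psi_{\al_\O^i}(-t))^{\pi/(\al_\O^i+\beta_\O^o)}$ when $\psi_{\al_\O^i}(-t)\geq\delta\geq 1$ (or by splitting off the trivial part otherwise), one reduces exactly to the finiteness assumed in \eqref{integral sin epsilon} up to an $e^{p|\lambda|t}t^{N}$-type factor which is itself integrable against the remaining exponential slack. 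I would also take care, at the very start, to record that the statement is vacuous-free: $0\in\esp$ always (by \eqref{expresion generador infinitesimal}), and the boundary of the claimed closed sector is genuinely attained, so no delicate limiting argument on $r$ is needed beyond Lemma \ref{lema conexion}.
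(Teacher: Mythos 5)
Your skeleton is the paper's: Theorem \ref{teorema contenciones general} for the two inclusions, Theorem \ref{teorema no convexo por arriba} for the two extreme rays $\theta=\frac{\pi}{2}+\al_\O^i$ and $\theta=\frac{3\pi}{2}-\b_\O^i$, then Proposition \ref{proposicion suma espectros} (with Lemma \ref{lema conexion}) to fill the sector. The gap is precisely in the step you flag as the main obstacle, and your proposed fix for it is incorrect. Writing $\Fi:=\al_\O^i+\beta_\O^o$, your comparison bounds $\bigl(1/\psi_{\al_\O^i}(-t)\bigr)^{\pi/(\Fi+\varepsilon)}$ by a constant times $\bigl(1/\psi_{\al_\O^i}(-t)\bigr)^{\pi/\Fi}$ when $\psi_{\al_\O^i}(-t)\geq 1$; this inequality goes the wrong way, since lowering the exponent of a quantity $\leq 1$ makes it \emph{larger}. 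The ratio of the two integrands is $\psi_{\al_\O^i}(-t)^{\varepsilon'}$ with $\varepsilon'=\frac{\pi}{\Fi}-\frac{\pi}{\Fi+\varepsilon}>0$, and the hypothesis \eqref{integral sin epsilon} (finiteness for every $|\lambda|>0$ against the growing factor $e^{p|\lambda|t}$) essentially forces $\psi_{\al_\O^i}$ to outgrow every exponential, so $\psi_{\al_\O^i}(-t)^{\varepsilon'}$ is typically superexponential and cannot be absorbed by constants or polynomials. Note also that $e^{p|\lambda|t}$ grows rather than decays ($p\geq 1$, $|\lambda|>0$), so there is no exponential slack inside the integrand of \eqref{integral por arriba} itself to absorb either the extra power $t^{\pi/(\Fi+\varepsilon)}$ or $\psi^{\varepsilon'}$; as written, your ``routine comparison'' does not close.

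What the paper does, and what your argument is missing, is a dichotomy plus a strict modulus gap. Fix $|\lambda_0|>0$ for which the hypothesis integral converges, write $|\lambda|=|\lambda_0|-\delta$ with $\delta>0$, and prove $\lambda\in\esp$ only for $|\lambda|<|\lambda_0|$; since \eqref{integral sin epsilon} holds for all moduli, this still yields every $|\lambda|>0$. Split $[t_0,\infty)$ into $A=\{t:\psi_{\al_\O^i}(-t)\geq e^{p|\lambda_0|\Fi t/\pi}\}$ and its complement $B$. On $A$ the integrand of \eqref{integral por arriba} is at most $t^{\pi/(\Fi+\varepsilon)}e^{p|\lambda|t}e^{-\frac{p|\lambda_0|\Fi}{\Fi+\varepsilon}t}$, which is integrable once $\varepsilon$ is small because $\frac{p|\lambda_0|\Fi}{\Fi+\varepsilon}\to p|\lambda_0|>p|\lambda|$. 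On $B$ the offending factor satisfies $\psi_{\al_\O^i}(-t)^{\varepsilon'}\leq e^{p|\lambda_0|\Fi\varepsilon' t/\pi}$, and choosing $\varepsilon'$ (hence $\varepsilon$) small enough that this exponent, together with the polynomial $t^{\pi/(\Fi+\varepsilon)}$, is swallowed by $e^{-p\delta t}$, one reduces to $\int_B e^{p|\lambda_0|t}\psi_{\al_\O^i}(-t)^{-\pi/\Fi}dt<\infty$, i.e.\ exactly the hypothesis at level $|\lambda_0|$. Without the gap $\delta>0$ and this $A/B$ decomposition the passage from the $\varepsilon$-free hypothesis to the $\varepsilon$-perturbed condition of Theorem \ref{teorema no convexo por arriba} fails, so your proof needs this step repaired; the rest (symmetric argument for $\b_\O^i$, then propagation via Proposition \ref{proposicion suma espectros}) is fine and matches the paper.
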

\begin{proof}
    First, assume that $\lambda =|\lambda|e^{i\left( \frac\pi2+\al_\O^i\right)}$ and $\mu = |\mu|e^{i\left( \frac{3\pi}2-\b_\O^i\right)}$ belong to $\esp$ for all $|\lambda|,|\mu|>0$. In such a case, Theorem \ref{teorema contenciones general} and Proposition \ref{proposicion suma espectros} give the result.

    Thus, it remains to show that $\lambda =|\lambda|e^{i\left( \frac\pi2+\al_\O^i\right)}\in \esp$; the proof for $\mu$ being identical. Let $|\lambda_0| >0$ such that
    $$
     \int_{t_0}^\infty e^{p|\lambda_0|t} \left( \frac1{\psi_{\al_\O^i}(-t)}\right)^{\frac{\pi}{\al_\O^i+\beta_\O^o}}\ < \infty.
    $$
    We show that $\lambda \in \esp$, provided that $|\lambda| < |\lambda_0|$, which together with \eqref{integral sin epsilon} will yield the result. By  Theorem \ref{teorema no convexo por arriba}, it is enough to find $\varepsilon>0$ such that 

\begin{equation}\label{integral convergente}
    \int_{t_0}^\infty e^{p|\lambda|t}\left(\frac{t}{\psi_{\al_\O^i}(-t)}\right)^{\frac{\pi}{\al_\O^i+\beta_\O^o+\varepsilon}}dt < \infty.
\end{equation}
    
    Write $|\lambda| =|\lambda_0|-\delta$, $\delta>0$. Set $\Fi = \al_\O^i+\beta_\O^o$, and define
    $$A:=\{t\in [t_0,+\infty) : \psi_{\al_\O^i}(-t) \geq e^{\frac{p|\lambda_0|\Fi}{\pi}t}\}; $$
    $$B:=  \{t\in [t_0,+\infty) : \psi_{\al_\O^i}(-t) < e^{\frac{p|\lambda_0|\Fi}{\pi}t}\}. $$ Observe that $[t_0,+\infty) =A\sqcup B.$ By the definition of the set $A$, we have
    \begin{equation*}
          \begin{split}
              \int_A e^{p|\lambda|t}\frac{t^{\frac{\pi}{\Fi+\varepsilon}}}{\psi_{\al_\O^i}(-t)^{\frac{\pi}{\Fi+\varepsilon}}}dt & \leq \int_A e^{p|\lambda|t} \frac{t^{\frac{\pi}{\Fi+\varepsilon}}}{e^{\frac{p|\lambda_0|\Fi}{\Fi+\varepsilon}t}} dt.
          \end{split}
    \end{equation*}
    Now, since $\frac{p|\lambda_0|\Fi}{\Fi+\varepsilon} \rightarrow p|\lambda_0| > p|\lambda|$ as $\varepsilon\rightarrow 0^+$, there exists some $\varepsilon>0$ sufficiently small such that the previous integral is convergent.

    \medskip

Hence, in order to deduce \eqref{integral convergente}, it remains to show that    
\begin{equation*}
              \int_B e^{p|\lambda|t}\frac{t^{\frac{\pi}{\Fi+\varepsilon}}}{\psi_{\al_\O^i}(-t)^{\frac{\pi}{\Fi+\varepsilon}}}dt < \infty,
              \end{equation*}
    for some $\varepsilon>0$. To this end, for each $\varepsilon>0$, write $\frac{\pi}{\Fi+\varepsilon} = \frac{\pi}{\Fi} - \varepsilon'$ for some $\varepsilon'>0$. Using this and the definition of the set $B$, we have
    \begin{equation*}
    \begin{split}
        \int_B e^{p|\lambda|t}\frac{t^{\frac{\pi}{\Fi+\varepsilon}}}{\psi_{\al_\O^i}(-t)^{\frac{\pi}{\Fi+\varepsilon}}}dt  = \int_B e^{p|\lambda_0|t}e^{-p\delta t}\frac{t^{\frac{\pi}{\Fi+\varepsilon}}\psi_{\al_\O^i}(-t)^{\varepsilon'}}{\psi_{\al_\O^i}(-t)^{\frac{\pi}{\Fi}}}dt \leq  \int_B e^{p|\lambda_0|t}e^{-p\delta t}\frac{t^{\frac{\pi}{\Fi+\varepsilon}}e^{\frac{p|\lambda|\Fi \varepsilon'}{\pi}t}}{\psi_{\al_\O^i}(-t)^{\frac{\pi}{\Fi}}}dt.
    \end{split}
              \end{equation*}
              At this point, choose $\varepsilon'>0$ such that $\frac{p|\lambda|\Fi \varepsilon'}{\pi}< \frac{\delta}{2},$ and notice that this choice fixes $\varepsilon>0$. Now, there exists $t_1\geq t_0$ such that $t^{\frac{\pi}{\Fi+\varepsilon}}\leq e^{\frac{\delta}{2}t}$, for all $t\geq t_1$. Thus, one has, for some $C>0$,
              $$
              \int_B e^{p|\lambda_0|t}e^{-p\delta t}\frac{t^{\frac{\pi}{\Fi+\varepsilon}}e^{\frac{p|\lambda|\Fi \varepsilon'}{\pi}t}}{\psi_{\al_\O^i}(-t)^{\frac{\pi}{\Fi}}}dt 
              \leq C+ \int_{B\cap[t_1,+\infty)} \frac{e^{p|\lambda_0|t}}{\psi_{\al_\O^i}(-t)^{\frac{\pi}{\Fi}}} dt,
              $$
        which is finite by hypothesis. This shows that $\lambda \in \esp$, and finishes the proof.
\end{proof}

Notice that Corollary \ref{corolario no convexo contiene sector} completely characterizes $\esp$ for those Koenigs domains $\Omega$ with associated defining functions $\psi_{\al_\O^i}$ and $\psi_{\b_\O^i}$ that grow rapidly enough. Indeed, if the following conditions are satisfied
$$
\lim\limits_{t\rightarrow+\infty} \frac{e^{at}}{\psi_{\al_\O^i}(-t)} = \lim\limits_{t\rightarrow+\infty} \frac{e^{at}}{\psi_{\b_\O^i}(t)} = 0
$$
for all $a>0$, the result holds.

\medskip
We pass now to the case $\alpha_\Omega^o + \beta_\Omega^o = 0$. The next result arises as an analogue to Theorem \ref{teorema no convexo por arriba} for this setting:

\begin{theorem}\label{teorema no convexo por arriba no contiene sector}
     Let $(\Fi_t)_{t>0}$ be a parabolic semigroup on $\D$. Suppose that $\Omega \subseteq S_p(\al_0,\beta_0)$ for some $p\in \R$, $\al_0\in [0,\pi),$ and $\beta_0\in [0,\pi)$, with $\alpha_0+\beta_0\leq\pi$. Suppose further that $\al_\O^o+\beta_\O^o =0$.
     \begin{enumerate}
         \item [(i)] Let $\lambda =|\lambda|e^{i\frac{\pi}{2}}$. Then, $\lambda \in \esp$ if one of the following conditions holds:
         \begin{enumerate}

\smallskip
         
             \item [(a)] $\dom(\psi_\O) = (a,+\infty)$ for some $a\in \R$.

             \smallskip

             \item [(b)] $\dom(\psi_\O) = (-\infty,b)$ for some $b \in \R$ and
             $$
             \int_{t_0}^\infty \frac{e^{p|\lambda|t}}{e^{\frac{\pi}{2}\frac{\psi_\O(-t)}{t}}} dt < \infty.
             $$
             \item [(c)] $\dom(\psi_\O) = \R$ and
              $$\int_{t_0}^\infty \frac{e^{p|\lambda|t}}{e^{\frac{\pi}{2}\frac{\widetilde{\psi_\O}(t)}{t}}} dt < \infty,
              $$
              where $\widetilde{\psi_\Omega}(t) := \min\{\psi_\O(-t),\psi_\O(t)\}$ for all $t\in\R$.
             \end{enumerate}

\smallskip

\smallskip

        \item [(ii)] Let $\lambda =|\lambda|e^{i\frac{3\pi}{2}}$. Then, $\lambda \in \esp$ if one of the following conditions holds:
         \begin{enumerate}
         
         \smallskip

             \item [(a)] $\dom(\psi_\O) = (-\infty,b)$ for some $b \in \R$. 

\smallskip
             
             \item [(b)] $\dom(\psi_\O) = (a,+\infty)$ for some $a\in \R$ and
             $$
             \int_{t_0}^\infty \frac{e^{p|\lambda|t}}{e^{\frac{\pi}{2}\frac{\psi_\O(t)}{t}}} dt < \infty.
             $$
             \item [(c)] $\dom(\psi_\O) = \R$ and
              $$\int_{t_0}^\infty \frac{e^{p|\lambda|t}}{e^{\frac{\pi}{2}\frac{\widetilde{\psi}_\O(t)}{t}}} dt < \infty,
              $$
              where $\widetilde{\psi_\Omega}(t) := \min\{\psi_\O(-t),\psi_\O(t)\}$ for all $t\in\R$.
         \end{enumerate}
     \end{enumerate}
\end{theorem}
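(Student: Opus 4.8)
The plan is to follow the strategy of the proof of Theorem~\ref{teorema no convexo por arriba}, with vertical strips playing the role of the thin angular sectors there and with Proposition~\ref{LTecnico:banda vertical} replacing Corollary~\ref{corolario medida armonica imaginaria}. Since $0\in\esp$ always, we may assume $\lambda\neq0$. Part~$(ii)$ will follow from part~$(i)$ by the conjugation symmetry $\Omega\mapsto\overline\Omega$, $\lambda\mapsto\overline\lambda$: this sends a parabolic Koenigs domain to a parabolic Koenigs domain, carries $|\lambda|e^{i3\pi/2}$ to $|\lambda|e^{i\pi/2}$, interchanges the cases $\dom(\psi_\Omega)=(a,+\infty)$ and $\dom(\psi_\Omega)=(-\infty,b)$, swaps the one-sided defining functions $\psi_\Omega(\pm t)$, and leaves $\widetilde{\psi_\Omega}$ invariant. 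Hence it is enough to treat part~$(i)$.

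For case $(a)$, if $\dom(\psi_\Omega)=(a,+\infty)$ then by Proposition~\ref{proposicion no convexo defining functions no contiene sector}$(b)$ the domain $\Omega$ is contained in a sector $S_{p_0}(0,\beta)$ with $p_0\in\R$ and $\beta\in(0,\pi)$. The rotation $e^{i\pi/2}$ maps this sector to one issuing from $ip_0$ whose aperture directions all lie strictly inside $(\pi/2,3\pi/2)$; hence $\lambda\Omega=|\lambda|e^{i\pi/2}\Omega$ lies in a left half-plane, and Proposition~\ref{proposicion contenido semiplano} gives $e^{\lambda h}\in H^\infty\subset H^p$, that is, $\lambda\in\esp$.

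For cases $(b)$ and $(c)$, since $\alpha_\Omega^o+\beta_\Omega^o=0$ the domain $\Omega$ is contained in arbitrarily thin sectors $S_{p_\varepsilon}(\varepsilon)$, so Corollary~\ref{corolario formula norma} applies with $\sigma=\pi/2$: setting $\Omega_\sigma=e^{i\pi/2}\Omega$ and $E_t=\{\xi\in\partial\Omega_\sigma:\ \Re(\xi)>t\}$,
\begin{equation*}
\norm{e^{\lambda h}}^p_{H^p}=\frac1{2\pi}\int_{-\infty}^{\infty}e^{p|\lambda|t}\,\omega(0,E_t,\Omega_\sigma)\,dt.
\end{equation*}
As $\omega\le1$ and $p|\lambda|>0$, the part of the integral over $(-\infty,t_0]$ is finite, so it suffices to bound the tail. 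By the rotation invariance of harmonic measure, $\omega(0,E_t,\Omega_\sigma)=\omega(0,\{\zeta\in\partial\Omega:\ \Im(\zeta)<-t\},\Omega)$, and the crux is the estimate, valid for $t\ge t_0$,
\begin{equation*}
\omega(0,E_t,\Omega_\sigma)\lesssim e^{-\frac{\pi}{2}\frac{\psi_\Omega(-t)}{t}}\quad\text{in case }(b),\qquad\omega(0,E_t,\Omega_\sigma)\lesssim e^{-\frac{\pi}{2}\frac{\widetilde{\psi_\Omega}(t)}{t}}\quad\text{in case }(c).
\end{equation*}
To prove it I would argue by the Maximum Principle, exactly as in Theorem~\ref{teorema no convexo por arriba}: a path starting at $0$ and exiting $\Omega$ through $\{\Im(\zeta)<-t\}$ must first travel from the line $\{\Im=0\}$ down to $\{\Im=-t\}$ while staying to the right of the boundary curve $x=\psi_\Omega(y)$. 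Using that $\Omega\subseteq\{\Im<b\}$ in case $(b)$ — which after the rotation by $e^{i\pi/2}$ becomes a vertical half-plane bounding $\Omega_\sigma$ — or that in case $(c)$ the two branches of $\partial\Omega$ confine $\Omega_\sigma$ between real parts close to $-t$ and $t$, one encloses the relevant portion of $\Omega_\sigma$ in a vertical strip of half-width of order $t$, with the target set lying above height of order $\psi_\Omega(-t)$ (respectively $\widetilde{\psi_\Omega}(t)$ in case $(c)$, the minimum appearing because the path may be forced up either branch). Proposition~\ref{LTecnico:banda vertical} then yields the exponential bound, and inserting it into the tail integral and invoking the hypothesis — together with $\psi_\Omega(-t)\ge\delta>0$ and $\psi_\Omega(-t)/t\to+\infty$ from Proposition~\ref{proposicion no convexo defining functions no contiene sector} — gives $\norm{e^{\lambda h}}_{H^p}<\infty$, hence $\lambda\in\esp$.

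I expect the harmonic measure estimate to be the main obstacle. The delicate points are choosing the comparison strip so that its width is genuinely of order $t$ and the target set sits at the correct height, and — because the defining function $\psi_\Omega$ is only upper semicontinuous and need not be monotone — making rigorous that the Brownian traveller must really perform a vertical excursion of length comparable to $\psi_\Omega(-t)$ (respectively $\widetilde{\psi_\Omega}(t)$), possibly after replacing $\psi_\Omega$ by a monotone minorant and controlling the loss. This non-monotonicity is precisely what forces the appearance of $\widetilde{\psi_\Omega}$, rather than a single one-sided defining function, in case $(c)$.
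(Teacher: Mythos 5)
Your proposal is correct, and its skeleton is the paper's: reduce to a tail estimate via Corollary \ref{corolario formula norma}, handle case $(a)$ by observing that $\dom(\psi_\O)=(a,+\infty)$ forces $\O\subseteq S_{p_0}(0,\beta)$ (Proposition \ref{proposicion no convexo defining functions no contiene sector}) so that $\lambda\O$ lies in a left half-plane and Proposition \ref{proposicion contenido semiplano} applies, and handle $(b)$--$(c)$ by a Maximum Principle comparison of $e^{i\pi/2}\O$ with the vertical strip $B_t$ followed by Proposition \ref{LTecnico:banda vertical} with $R_t=\psi_\O(-t)$, resp. $R_t=\widetilde{\psi_\O}(t)$ (the needed condition $R_t/t\to+\infty$ coming from Proposition \ref{proposicion no convexo defining functions no contiene sector}). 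The one place you genuinely diverge is case $(c)$: the paper first symmetrizes, replacing $\O$ by the larger Koenigs domain $\widetilde{\O}$ defined by $\widetilde{\psi_\O}$, transfers the norm via the bounded composition operator $C_{\widetilde{h}^{-1}\circ h}$ (so $\norm{e^{\lambda h}}\leq\norm{C_\Fi}\,\norm{e^{\lambda \widetilde{h}}}$), and only then runs the strip comparison in the symmetric domain, using the exact identity $\omega(0,E_t^+, \widetilde{\O}_{\pi/2})=\tfrac12\,\omega(0,E_t^+\cup E_t^-,\widetilde{\O}_{\pi/2})$; you instead compare directly in $e^{i\pi/2}\O$, noting that the part of $\partial B_t$ inside the rotated domain consists of the two rays $\{t+ix:x>\psi_\O(-t)\}$ and $\{-t+ix:x>\psi_\O(t)\}$, both contained in $\{z\in\partial B_t:\Im z>\widetilde{\psi_\O}(t)\}$, so monotonicity of harmonic measure in the boundary set plus Proposition \ref{LTecnico:banda vertical} gives the bound without introducing $\widetilde{\O}$ or the subordination step. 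Both routes are valid; yours is a bit more economical, the paper's buys a cleanly symmetric configuration. Finally, the difficulty you flag about upper semicontinuity and lack of monotonicity of $\psi_\O$ is not actually there: by the defining-function description, for $\mp t\in I$ the vertical cross-section $\{\Re=\pm t\}\cap e^{i\pi/2}\O$ is \emph{exactly} the open ray above $\psi_\O(\mp t)$ (starlikeness at infinity becomes invariance under upward translation after the rotation), so the strip comparison requires no monotone minorant, and the same remark explains why in case $(b)$ the left edge of $B_t$ simply misses the domain once $t$ exceeds $|b|$.
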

\begin{proof}
    As usual, we prove $(i)$. The proof of $(ii)$ is analogous.

First, observe that, if $\dom(\psi_\O) = (a,+\infty)$, an identical argument to the one used in the beginning of the proof of Theorem \ref{teorema no convexo por arriba}, along with Proposition \ref{proposicion no convexo defining functions no contiene sector}, yields that $\lambda \in \esp$.

Now, assume that $\dom(\psi_\O) = (-\infty,b)$, and consider $\Omega_{\frac\pi2}= e^{i\frac\pi2}\Omega.$ In such a case, by Corollary \ref{corolario formula norma}
$$
\norm{e^{\lambda h}}^p \lesssim C+ \int_{t_0}^\infty e^{p|\lambda|t}\omega(0, E_t,\Omega_\frac\pi2)dt,
    $$
    where $E_t = \{\xi \in \partial \Omega_\frac\pi2: \PR(\xi) >t\}$ and $C>0$.

By the assumption on $\dom(\psi_\O)$, $\Omega_\frac\pi2$ is contained in the half-plane $b+\C_+$. Since $0\in \Omega,$ we have that $b<0$. Now, define the following vertical strip
$$
B_t = \{z\in \C: |\PR(z)|<t\}, \quad t\geq t_0.
$$
 Define also
$$
F_t^+:=\{t+ix : x> \psi_\O(-t)\} \quad \text{and} 
\quad 
F_t^-:=\{-t+ix : x> \psi_\O(-t)\}.
$$

Without loss of generality, we may assume that for all $t\geq t_0$, $F_t^-$ is contained in $\C\setminus(b+\C_+).$ Now, for all $t\geq t_0$, by the Maximum  Principle, it follows that
$$
\omega(0,E_t,\O_\frac\pi2) \leq \omega(0,F^+_t,B_t)
$$
for all $t\geq t_0$. Since $F_t^-$ is the reflection of $F_t^+$ with respect to the imaginary axis, and $B_t$ is symmetric with respect to it, we have that $$\omega(0,F^+_t,B_t) = \frac{1}{2}\omega(0,F^+_t\cup F^-_t,B_t).$$
At this point, it is enough to apply Proposition \ref{LTecnico:banda vertical} to deduce the result.

\medskip
It remains to prove the case $\dom(\psi_\O) = \R$. In order to perform a similar construction as the one carried out above, we need to symmetrize the rotated domain $\O_\frac\pi2$. First, observe that the function $\widetilde{\psi_\Omega}$ is the minimum of two upper semicontinuous functions, so it is also upper semicontinuous. Then, $\widetilde{\psi_\Omega}$ defines a parabolic Koenigs domain $\widetilde{\O}$, with Koenigs function $\widetilde{h}$, that is symmetric with respect to the real axis and satisfies
    $$
    \Omega \subset \widetilde{\Omega}.
    $$
    Now, $\Fi = \widetilde{h}^{-1}\circ h$ is a holomorphic self-map of $\D$, so the associated composition operator $C_\Fi$ is bounded on $H^p$. Hence, we have
    $$
    \norm{e^{\lambda h}} = \norm{C_\Fi e^{\lambda \widetilde{h}}} \leq \norm{C_\Fi}\norm{e^{\lambda \widetilde{h}}}.
    $$
    Now, again by Corollary \ref{corolario formula norma}
    $$
    \norm{e^{\lambda \widetilde{h}}}^p\lesssim C+ \int_{t_0}^\infty e^{p|\lambda|t}\omega(0, E_t^+,\widetilde{\O}_{\frac\pi2})dt,
    $$
    where $E_t^+ = \{\xi \in \partial \widetilde{\O}_\frac\pi2: \PR(\xi) >t\}$ and $C>0$. Likewise, define

    $$
    E_t^- = \{\xi \in \partial \widetilde{\O}_{\frac\pi2}: \PR(\xi) <-t\} $$
    and observe that $E_t^-$ is the reflection with respect to the imaginary axis of $E_t^+$.
    
    At this point, we may consider again $B_t$ as defined above, and the boundary subsets
    $$
    \widetilde{F^+_t}:= \{t+ix: x > \widetilde{\psi_\O}(t)\};\quad
    \widetilde{F^-_t}:= \{-t+ix: x > \widetilde{\psi_\O}(t)\}.
    $$
    Now, since $\widetilde{\O}$ is symmetric with respect to the imaginary axis, one has for all $t\geq t_0$
    $$
    \omega(0,E^+_t,\widetilde{\O_\frac\pi2}) = \frac{1}{2} \omega(0,E^+_t\cup E_t^-,\widetilde{\O_\frac\pi2}).
    $$
    By the Maximum Principle
    $$
    \omega(0,E^+_t\cup E_t^-,\widetilde{\O_\frac\pi2}) \leq \omega(0,\widetilde{F^+_t}\cup \widetilde{F_t^-},B_t)
    $$
    for all $t\geq t_0$, and again an application of Proposition \ref{LTecnico:banda vertical} yields the result.
\end{proof}

\begin{corollary}\label{corolario no convexo no contiene sector}
     Let $(\Fi_t)_{t>0}$ be a parabolic semigroup on $\D$. Suppose that $\Omega \subseteq S_p(\al_0,\beta_0)$ for some $p\in \R$, $\al_0\in [0,\pi)$, and $\beta_0\in [0,\pi)$, with $\alpha_0+\beta_0\leq\pi$. Suppose further that $\al_\O^o+\beta_\O^o =0$. Assume one of the following three conditions holds:
     \begin{enumerate}
         \item [(i)] $\dom(\psi_\O) = (a,+\infty)$ and
          $$
             \int_{t_0}^\infty \frac{e^{p|\lambda|t}}{e^{\frac{\pi}{2}\frac{\psi_\O(t)}{t}}} dt < \infty\quad \text{for all $|\lambda|>0$.}
             $$             
        \item [(ii)] $\dom(\psi_\O) = (-\infty,b)$ and 
         $$
             \int_{t_0}^\infty \frac{e^{p|\lambda|t}}{e^{\frac{\pi}{2}\frac{\psi_\O(-t)}{t}}} dt < \infty \quad \text{for all $|\lambda|>0$.}
             $$
        \item [(iii)] $\dom(\psi_\O) = \R$ and
         $$
             \int_{t_0}^\infty \frac{e^{p|\lambda|t}}{e^{\frac{\pi}{2}\frac{\widetilde{\psi_\O}(t)}{t}}} dt < \infty\quad \text{for all $|\lambda|>0$.}
             $$
        \end{enumerate}
      Then, $$
      \esp = \left\{re^{i\theta}: r\geq 0, \frac{\pi}{2}\leq \theta \leq \frac{3\pi}{2}\right\}.
      $$
\end{corollary}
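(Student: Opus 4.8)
\emph{Proof proposal.} The plan is to assemble this from the two main results of the section, with no new harmonic‑measure estimate needed. First I would record the containments. Since $\alpha_\Omega^o+\beta_\Omega^o=0$, both outer maximal angles vanish, hence so do the inner ones, because $0\leq\alpha_\Omega^i\leq\alpha_\Omega^o=0$ and $0\leq\beta_\Omega^i\leq\beta_\Omega^o=0$. Moreover $\alpha_\Omega^o=\beta_\Omega^o=0$ forces $\Omega$ to be contained in arbitrarily narrow angular sectors $S_{p}(\alpha_0,\beta_0)$ with $\alpha_0+\beta_0\leq\pi$, so Theorem \ref{teorema contenciones general} applies and yields
\[
\left\{re^{i\theta}: r\geq 0,\ \tfrac{\pi}{2}<\theta<\tfrac{3\pi}{2}\right\}\subseteq\esp\subseteq\left\{re^{i\theta}: r\geq 0,\ \tfrac{\pi}{2}\leq\theta\leq\tfrac{3\pi}{2}\right\}.
\]
Thus $\esp$ contains the open left half-plane and is contained in the closed one, so it only remains to decide whether the two boundary rays $\{re^{i\pi/2}: r>0\}$ and $\{re^{i3\pi/2}: r>0\}$ lie in $\esp$; recall that $0\in\esp$ always by \eqref{expresion generador infinitesimal}.

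Next I would settle these rays by invoking Theorem \ref{teorema no convexo por arriba no contiene sector}, going through the three cases. In case (i), $\dom(\psi_\O)=(a,+\infty)$: part (i)(a) of that theorem already gives $|\lambda|e^{i\pi/2}\in\esp$ for every $|\lambda|>0$ with no extra hypothesis, while part (ii)(b) gives $|\lambda|e^{i3\pi/2}\in\esp$ precisely under the integral condition assumed in (i), which holds for all $|\lambda|>0$. In case (ii), $\dom(\psi_\O)=(-\infty,b)$: symmetrically, part (ii)(a) gives the whole ray $\{re^{i3\pi/2}: r>0\}$ for free, and part (i)(b) gives $\{re^{i\pi/2}: r>0\}$ under the integral condition assumed in (ii). In case (iii), $\dom(\psi_\O)=\R$: parts (i)(c) and (ii)(c) yield $|\lambda|e^{i\pi/2}\in\esp$ and $|\lambda|e^{i3\pi/2}\in\esp$ respectively, both under the single integral condition on $\widetilde{\psi_\O}$ assumed in (iii). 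In every case, both boundary rays, together with the origin, lie in $\esp$.

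Finally, combining the open left half-plane obtained from Theorem \ref{teorema contenciones general} with the two boundary rays just obtained, we conclude
\[
\esp=\left\{re^{i\theta}: r\geq 0,\ \tfrac{\pi}{2}\leq\theta\leq\tfrac{3\pi}{2}\right\},
\]
i.e.\ $\esp$ is exactly the closed left half-plane, as claimed. The only point requiring attention is matching each hypothesis of the corollary to the correct part of Theorem \ref{teorema no convexo por arriba no contiene sector}; note that the $\pi/2$- and $3\pi/2$-rays are treated \emph{asymmetrically} in cases (i) and (ii) (one ray comes for free from the shape of $\dom(\psi_\O)$, the other needs the integral bound), so the bookkeeping is the whole content of the argument — there is no analytic difficulty beyond what was already established there.
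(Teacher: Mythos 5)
Your proposal is correct and follows exactly the route of the paper, whose proof is simply the combination of Theorem \ref{teorema contenciones general} (giving the open left half-plane and the closed half-plane as lower and upper bounds for $\esp$) with Theorem \ref{teorema no convexo por arriba no contiene sector} (supplying the two boundary rays in each of the three cases). Your case-by-case matching of the corollary's hypotheses to parts (i)(a)--(c) and (ii)(a)--(c) of that theorem is precisely the bookkeeping the paper leaves implicit.
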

\begin{proof}
    The result follows as a consequence of both Theorem \ref{teorema contenciones general} and Theorem \ref{teorema no convexo por arriba no contiene sector}.
\end{proof}
Again, Corollary \ref{corolario no convexo no contiene sector} may be applied for defining functions $\psi_\O$ that grow fast enough. For instance, by assuming that there exists $\gamma >2$ such that $\psi_\Omega(t) \geq C |t|^\gamma$  for all $|t|\geq t_0$, the result holds.

\subsection{An illustrating particular case}\label{subseccion convexo} To finish this section, we consider convex Koenigs domains $\Omega$ and obtain necessary conditions for $e^{\lambda h}$ to belong to $H^p$. Such conditions, together with Theorem \ref{teorema no convexo por arriba}, completely characterize $\esp$, up to two points.

\medskip

Let $(\phi_t)_{t\geq 0}$ be a parabolic semigroup such that its Koenigs domain $\Omega$ is convex. This yields that $\Omega$ is contained in $S_p(\al_0,\beta_0)$ for some $p\in \C$ and $\al_0,\beta_0\in [0,\pi]$, where $0<\al_0+\beta_0\leq \pi$. It is straightforward to check that $\al_\O^i = \al_\O^o$ and $\beta_\O^i = \beta_\O^o$. For simplicity, let us denote, respectively, $\al_\O := \al_\O^i = \al_\O^o$ and $\beta_\O := \beta_\O^i = \beta_\O^o$. In this situation, Theorem \ref{teorema contenciones general} gives that

$$  \sigma_p(\Delta\mid_{H^p})\supseteq\left\{re^{i\theta}: r\geq 0, \ \frac{\pi}{2}+\al_\O < \theta < \frac{3\pi}{2}-\beta_\O \right\} $$ 
      $$
           \sigma_p(\Delta\mid_{H^p})\subseteq \left\{re^{i\theta}: r\geq 0, \ \frac{\pi}{2}+\al_\O \leq \theta \leq \frac{3\pi}{2}-\beta_\O \right\}.
            $$
			In particular, if $\al_\Omega +\beta_\Omega = \pi$, $\Omega$ is a half-plane, so by Corollary \ref{corolario contenciones sector angular},
			$\sigma_p(\Delta\mid_{H^p}) =\ell\left(\frac{\pi}{2}+ \al_\Omega\right)$. Regarding whether $\lambda=|\lambda|e^{i\left(\frac{\pi}{2}+\al_\O\right)}$ or $\mu =|\mu|e^{i\left(\frac{3\pi}{2}-\beta_\O\right)}$ lies in $\esp$, we have:

\begin{theorem}\label{teorema caso convexo contiene sector}
		Let $(\Fi_t)_{t\geq0}$ be a parabolic semigroup on $\D$ with convex  Koenigs domain $\Omega$. Assume that $\Omega$ contains an angular sector and $0<\al_\O+\b_\O<\pi$.
		\begin{enumerate}
			\item [(i)] Let $\lambda =|\lambda|e^{i\left(\frac{\pi}{2}+\al_\O\right)}$. Then, $\lambda \notin \esp$ provided that  $\dom(\psi_{\O_\al})=\R$ and
				\begin{equation}\label{integral acotacion por abajo convexo}
					\int_{t_0}^\infty e^{p|\lambda|t}\left(\frac{1}{\psi_{\O_\al}(-t)}\right)^{\frac{\pi}{\al_\O+\beta_\O}}dt = \infty.
				\end{equation}
			\item [(ii)] Let $\mu =|\mu|e^{i\left(\frac{3\pi}{2}-\beta_\O\right)}.$ Then, $\mu \notin \esp$ provided that  $\dom(\psi_{\O_\b})=\R$ and
                \begin{equation}\label{integral acotacion por abajo convexo beta}
                    \int_{t_0}^\infty e^{p|\mu|t}\left(\frac{1}{\psi_{\O_\b}(t)}\right)^{\frac{\pi}{\al_\O+\beta_\O}}dt = \infty.
                \end{equation}

		\end{enumerate}
	\end{theorem}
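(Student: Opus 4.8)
The plan is to prove the contrapositive-type statement directly: assuming $\dom(\psi_{\O_\al})=\R$ and the divergence of the integral in \eqref{integral acotacion por abajo convexo}, I want to produce a \emph{lower} bound for $\norm{e^{\lambda h}}_{H^p}$ that forces it to be infinite. By Corollary \ref{corolario formula norma}, since $\Omega$ is convex and hence contained in some sector $S_p(\al_0,\beta_0)$ with $\al_0+\beta_0<\pi$ (here using $\al_\O+\b_\O<\pi$ together with Proposition \ref{proposicion convexo contenido sector}(i) to squeeze $\Omega$ into a sector of opening $<\pi$), we have
\begin{equation*}
\norm{e^{\lambda h}}^p_{H^p}=\frac{1}{2\pi}\int_{-\infty}^{\infty}e^{p|\lambda|t}\,\omega\bigl(0,\{\xi\in\partial\Omega_\sigma:\Re(\xi)>t\},\Omega_\sigma\bigr)\,dt,
\end{equation*}
where $\sigma=\frac{\pi}{2}+\al_\O$ so that $\Omega_\sigma=e^{i\sigma}\Omega$ is the rotated domain whose defining function is $\psi_{\O_\al}$. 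Thus everything reduces to a \emph{lower} bound for $\omega(0,E_t,\Omega_\sigma)$ with $E_t=\{\xi\in\partial\Omega_\sigma:\Re(\xi)>t\}$, for all large $t$.

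The key step is to compare $\Omega_\sigma$ from the \emph{inside} with a rotated angular sector. Since $\Omega$ contains the sector $S(\al_\O,\b_\O)$ by Proposition \ref{proposicion convexo contenido sector}(i), the rotated domain $\Omega_\sigma$ contains a rotated copy of $S(\al_\O,\b_\O)$, i.e.\ a sector of opening $\varphi:=\al_\O+\b_\O$ one of whose edges is the positive imaginary axis direction (because $\sigma=\pi/2+\al_\O$ rotates the lower edge $\ell(-\al_\O)$ onto $\ell(\pi/2)$). Using convexity of $\Omega$ I can in fact pin down that a translated sector $\zeta_t+R$, with $R$ the fixed rotated sector of opening $\varphi$ and $\zeta_t$ chosen on $\partial\Omega_\sigma\cap\{\Re=t\}$ at height $\psi_{\O_\al}(-t)$, sits inside $\Omega_\sigma$; this is where Lemma \ref{lema cambio punto} and the monotonicity/convexity of $\psi_{\O_\al}$ from Proposition \ref{propiedadespsi:convexo}(ii) enter. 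Then the monotonicity of harmonic measure under domain inclusion gives
\begin{equation*}
\omega(0,E_t,\Omega_\sigma)\;\gtrsim\;\omega\bigl(0,E_t',\zeta_t+R\bigr),
\end{equation*}
where $E_t'$ is the portion of $\partial(\zeta_t+R)$ lying in $\{\Re>t\}$ — but I must be careful: enlarging the domain \emph{decreases} harmonic measure of a fixed boundary set, so the inclusion must be arranged so that the comparison sector is the \emph{smaller} region and $E_t'\subseteq E_t$ up to sets not seen from $0$, invoking the Maximum Principle exactly as in the proof of Theorem \ref{teorema no convexo por arriba} but now to get a lower estimate. Concretely I expect to bound $\omega(0,E_t,\Omega_\sigma)$ below by the harmonic measure, computed in a fixed rotated sector via Corollary \ref{calculo medida armonica}, of a boundary ray starting at distance comparable to $\psi_{\O_\al}(-t)$ from the vertex, evaluated at a point at distance comparable to $t$; by \eqref{lemabisec} this is $\approx \arctan\!\bigl(2S_t/(S_t^2-1)\bigr)$ with $S_t\approx\bigl(t/\psi_{\O_\al}(-t)\bigr)^{\pi/\varphi}$ (using $\psi_{\O_\al}(-t)/t\to\infty$ so $S_t\to 0$), hence $\omega(0,E_t,\Omega_\sigma)\gtrsim \bigl(\psi_{\O_\al}(-t)/t\bigr)^{-\pi/\varphi}\cdot\frac{1}{S_t}$ — wait, I must track this sign: as $S_t\to 0$, $\arctan(2S_t/(S_t^2-1))\to 0$ linearly in $S_t$, so $\omega\gtrsim S_t\approx (t/\psi_{\O_\al}(-t))^{\pi/\varphi}$, which is the \emph{reciprocal} of \eqref{integral acotacion por abajo convexo}'s integrand shape and would be going the wrong way. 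The correct geometry must instead make $S_t$ \emph{large}: the vertex of the comparison sector should be placed far to the \emph{right}, at real part $\sim \psi_{\O_\al}(-t)/t\cdot(\text{something})$, no — the right picture is that the relevant harmonic measure of $E_t$ seen from $0$ behaves like $\bigl(\psi_{\O_\al}(-t)\bigr)^{-\pi/\varphi}$ up to factors polynomial in $t$, giving $\omega(0,E_t,\Omega_\sigma)\gtrsim \bigl(1/\psi_{\O_\al}(-t)\bigr)^{\pi/\varphi}$ for $t$ large. Plugging this into the integral formula yields
\begin{equation*}
\norm{e^{\lambda h}}^p_{H^p}\;\gtrsim\;\int_{t_0}^{\infty}e^{p|\lambda|t}\left(\frac{1}{\psi_{\O_\al}(-t)}\right)^{\frac{\pi}{\al_\O+\beta_\O}}dt=\infty
\end{equation*}
by hypothesis \eqref{integral acotacion por abajo convexo}, so $e^{\lambda h}\notin H^p$, i.e.\ $\lambda\notin\esp$. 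Part (ii) is symmetric, working with $\Omega_{\b_\O}=e^{-i\b_\O}\Omega$, $\psi_{\O_\b}$, and the rotation by $\frac{3\pi}{2}-\b_\O$.

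The main obstacle — and the place requiring the most care — is getting the geometry of the inner comparison sector right so that the harmonic measure estimate genuinely produces a \emph{lower} bound of the exact order $\bigl(1/\psi_{\O_\al}(-t)\bigr)^{\pi/\varphi}$, without losing an uncontrolled $t$-dependent factor that could kill the divergence of the integral. This hinges on: (a) using the convexity of $\psi_{\O_\al}$ and Proposition \ref{propiedadespsi:convexo}(ii) to control the \emph{shape} of $\partial\Omega_\sigma$ near $\{\Re=t\}$, ensuring the inscribed translated sector of fixed opening $\varphi=\al_\O+\b_\O$ reaches up to height exactly $\sim\psi_{\O_\al}(-t)$ and its ``far'' edge escapes to $\{\Re>t\}$ in a controlled way, so that the Maximum Principle comparison loses nothing; and (b) applying the explicit formula \eqref{lemabisec} of Corollary \ref{calculo medida armonica} at the point $0$ — whose distance to the moving vertex $\zeta_t$ is $\approx t\sqrt{1+(\psi_{\O_\al}(-t)/t)^2}\approx\psi_{\O_\al}(-t)$ since $\psi_{\O_\al}(-t)/t\to\infty$ — so that the ratio entering the $\arctan$ is $\approx\bigl(\psi_{\O_\al}(-t)/\psi_{\O_\al}(-t)\bigr)^{\pi/\varphi}$, which forces a more delicate choice: one evaluates at a point like $-\zeta_t'$ at distance $\sim t$ rather than at $0$, then transfers back to $0$ using Harnack / Theorem \ref{teorema distancia hiperbolica} (the hyperbolic distance between $0$ and that point inside $\Omega_\sigma$ being bounded because both lie deep in the inscribed sector), absorbing the resulting multiplicative constant. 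Threading these estimates so the final power is precisely $\pi/(\al_\O+\beta_\O)$ — matching the integrand in \eqref{integral acotacion por abajo convexo} — is the crux, and it is exactly here that the convexity hypothesis (equality $\al_\O^i=\al_\O^o$, monotonicity and superlinear growth of $\psi_{\O_\al}$) is indispensable, since in the non-convex case only the weaker exponent $\pi/(\al_\O^i+\beta_\O^o+\varepsilon)$ was available in Theorem \ref{teorema no convexo por arriba}.
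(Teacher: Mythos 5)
Your skeleton coincides with the paper's (reduce via Corollary \ref{corolario formula norma} to a lower bound for $\omega(0,E_t,\Omega_\sigma)$ with $\sigma=\frac{\pi}{2}+\al_\O$, compare with a sector of opening $\al_\O+\b_\O$ via the Maximum Principle, compute with \eqref{lemabisec}, transfer base points by Theorem \ref{teorema distancia hiperbolica}), but the crux --- a bound $\omega(0,E_t,\Omega_\sigma)\gtrsim \psi_{\O_\al}(-t)^{-\pi/(\al_\O+\b_\O)}$ with a constant independent of $t$ --- is never actually established, and this is a genuine gap rather than a detail. Your first construction (a translated inscribed sector with vertex at $z_t=t+i\psi_{\O_\al}(-t)$) cannot be evaluated at the base point, since $0$ does not belong to that inscribed sector for large $t$; moreover the monotonicity you invoke is stated backwards (for a boundary set common to both domains, harmonic measure \emph{increases} when the domain is enlarged), and you then discard the computation for a confused reason and simply assert that the harmonic measure ``behaves like $\psi_{\O_\al}(-t)^{-\pi/\varphi}$ up to factors polynomial in $t$.'' That caveat is fatal for the theorem as stated: hypothesis \eqref{integral acotacion por abajo convexo} is divergence at one fixed $|\lambda|$, and divergence does not survive multiplication of the integrand by $t^{-N}$ (take, e.g., $\psi_{\O_\al}(-t)^{\pi/\varphi}=e^{p|\lambda|t}/t$), so a polynomially lossy lower bound proves strictly less than claimed. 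Your proposed repair --- evaluate at a point at distance $\sim t$ and transfer to $0$ ``because the hyperbolic distance is bounded'' --- is unjustified: the hyperbolic distance in $\Omega_\sigma$ from $0$ to points escaping to infinity is unbounded (of order $\log t$ in a sector), so the Harnack factor $e^{2k_{\Omega_\sigma}}$ from Theorem \ref{teorema distancia hiperbolica} is again polynomial in $t$, reintroducing exactly the loss you must avoid.

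The paper resolves this with a different geometry: the comparison sector keeps its vertex at the origin and is \emph{rotated}, $S_t=e^{i\theta_t}S(0,\al_\O+\b_\O)$ with $\theta_t=\arctan\bigl(\psi_{\O_\al}(-t)/t\bigr)$, so that the edge $\ell(\theta_t)$ meets $\partial\Omega_\sigma$ exactly at $z_t$, the portion $F_t$ of $\ell(\theta_t)$ beyond $z_t$ lies outside $\Omega_\sigma$, and the other edge lies inside $\Omega_\sigma$ for $t$ large (here convexity and $\psi_{\O_\al}(-t)/t\to+\infty$ from Proposition \ref{propiedadespsi:convexo} enter). The Maximum Principle gives $\omega(\zeta,E_t,\Omega_\sigma)\geq\omega(\zeta,F_t,S_t)$, and the evaluation point is $\zeta_t=e^{i\left(\theta_t+\frac{\al_\O+\b_\O}{2}\right)}$, of modulus one on the bisector of $S_t$: since $\theta_t\to\frac{\pi}{2}$, these points stay in a fixed compact subset of $\Omega_\sigma$, so the transfer to $0$ costs a uniform constant. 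Then \eqref{lemabisec} with $r=1$ and $|z_t|=(t^2+\psi_{\O_\al}(-t)^2)^{1/2}\approx\psi_{\O_\al}(-t)$ yields exactly $\omega(\zeta_t,F_t,S_t)\gtrsim\psi_{\O_\al}(-t)^{-\pi/(\al_\O+\b_\O)}$ with absolute constants, which is what lets the divergence hypothesis bite. Until you supply a construction of this kind (moving the sector's direction rather than its vertex, and evaluating at a point that stays in a compact set), the proof is incomplete.
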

\begin{proof}
    As usual, we just prove $(i)$. Assume that $\dom(\psi_{\O_\al}) = \R$. By the convexity of $\Omega$, ${\O_\al}$ is itself a convex Koenigs domain, so $\psi_{\O_\al}$ is a continuous convex function that coincides with the defining functions of such a domain. As a consequence of Fekete's Lemma (\cite[Theorem 7.6.1]{HP}) and the minimality of $\al_\O$, it follows that  
    \begin{equation}\label{limite}
        \lim\limits_{t\rightarrow \infty} \frac{\psi_{\Omega_{\al}}(-t)}{t}=+\infty.
    \end{equation}
    
    Now, given $\lambda =|\lambda|e^{i\left(\frac{\pi}{2}+\al_\O\right)}$, our goal is to find constants $C_1, t'$ such that
	\begin{equation}\label{acotacion por abajo}
		\norm{e^{\lambda h}}_{H^p} \gtrsim C_1+\int_{t'}^\infty e^{p|\lambda|t}\left(\frac{1}{\psi_{\O_\a}(-t)}\right)^{\frac{\pi}{\al_\O+\beta_\O}}dt,
	\end{equation}
	which, by hypothesis \eqref{integral acotacion por abajo convexo}, will diverge and $\lambda$ will fail to belong to the point spectrum, as desired. By \eqref{expresion norma medida armonica}, it is enough to obtain a proper lower bound for  $\omega(0,E_t, \Omega_\sigma),$ 	where  $\sigma = \frac{\pi}{2}+\al_\O$, $\Omega_\sigma = e^{i\sigma}\Omega$ and $E_t = \{ \xi \in \Omega_\sigma: \Re(\xi) >t\}$. 
	
	Let $t_0$ be as in Proposition \ref{proposicion no convexo defining functions}. For all $t\geq t_0$, define $\theta_t = \arctan\left( \frac{\psi_{\O_\a}(-t)}{t}\right).$ Now, consider the rotated angular sectors 	$S_t = e^{i\theta_t}S(0,\al_\O+\beta_\O).$  We have $\partial S_t = \ell(\theta_t)\cup \ell(\al_\O+\beta_\O+\theta_t).$ By \eqref{limite}, $\theta_t \rightarrow \frac{\pi}{2}$ as $t\rightarrow +\infty$, so there exists $t_1\geq t_0$ such that $\ell(\al_\O+\beta_\O+\theta_t)\subset \Omega_\s$ for all $t\geq t_1.$ On the other hand, $\ell(\theta_t)\cap \partial \Omega_\s = \{z_t\},$ where $z_t = t+i\psi_{\O_\a}(-t)$ and  
	
	$$
	\ell(\theta_t)\cap (\C\setminus \Omega_\s) = F_t = \{ \xi \in \partial S_t : \PR(\xi) > t\}.
	$$
	
	Now, for each $t\geq t_1$, define $\zeta_t = e^{i(\theta_t + \frac{\al_\O+\beta_\O}{2})}$, and note that it is the point of modulus $1$ in the bisector of $S_t$. Clearly, there exists a compact subset $K\subset \Omega_\sigma$ such that $\zeta_t \in K$ for all $t\geq t_1$, so by Theorem \ref{teorema distancia hiperbolica} and the continuity of the hyperbolic distance, there exists a constant $C>0$ (not depending on $t$) such that
	$$
	\omega(0,E_t,\Omega_\sigma) \geq C \omega(\zeta_t,E_t,\O_\s).
	$$
	At this point, we can apply the maximum modulus principle so that
	$$
	\omega(\zeta_t,E_t,\O_\s) \geq \omega(\zeta_t, F_t, S_t).
	$$
	Moreover, by the choice of $\zeta_t$, we can apply the second part of Corollary \ref{calculo medida armonica} to obtain, by \eqref{lemabisec}, that
	
	$$
	\omega(\zeta_t, F_t, S_t) = \frac{1}{2\pi} \arctan\left(\frac{2 (t^2+\psi_{\O_\a}(-t)^2)^{\frac{\pi}{2(\al_\O+\beta_\O)}} }{(t^2+\psi_{\O_\a}(-t)^2)^{\frac{\pi}{(\al_\O+\beta_\O)}}-1}         \right)\cdot
	$$
	Now, there exists $t_2 \geq t_1$ such that for all $t\geq t_2$
	
	\begin{equation*}
		\begin{split}
			\arctan\left(\frac{2 (t^2+\psi_{\O_\a}(-t)^2)^{\frac{\pi}{2(\al_\O+\beta_\O)}} }{(t^2+\psi_{\O_\a}(-t)^2)^{\frac{\pi}{(\al_\O+\beta_\O)}}-1}         \right) 
			 \gtrsim \frac{1}{\psi_{\O_\a}(-t)^{\frac{\pi}{\al_\O+\beta_\O}}}\cdot
		\end{split}
	\end{equation*}
	This concludes the proof. 
\end{proof}
\begin{remark}
  We finish this section by highlighting a consequence of Theorem \ref{teorema caso convexo contiene sector}. Under its hypotheses, the convergence of the integral in \eqref{integral acotacion por abajo convexo} characterizes whether $\lambda =|\lambda|e^{i\left( \frac{\pi}{2}+\al_\O\right)}$ belongs to $\esp$, except for, at most, one point. An analogous characterization holds for $\mu = |\mu|e^{i\left( \frac{3\pi}{2}-\b_\O\right)}$ using the integral in \eqref{integral acotacion por abajo convexo beta}. We just reason for $\lambda$, the arguments for $\mu$ being identical.   

\ 
Note that, if the integral in \eqref{integral acotacion por abajo convexo} converges for some $|\lambda_0| >0$, the arguments in the proof of Corollary \ref{corolario no convexo contiene sector} ensure that \eqref{integral por arriba} converges for every $|\lambda|<|\lambda_0|$, for some appropriate $\varepsilon>0$ depending on $|\lambda|$. Consequently, $\lambda = |\lambda|e^{i\left( \frac{\pi}{2}+\al_\O\right)}$ belongs to $\esp$ whenever $|\lambda|<|\lambda_0|$. This leads to three mutually exclusive scenarios:
   \begin{enumerate}
       \item [(i)] If the integral in \eqref{integral acotacion por abajo convexo} converges for all $|\lambda|>0$, then $\lambda \in \esp$ for all $|\lambda|>0$. 
       \item [(ii)] If the integral in \eqref{integral acotacion por abajo convexo} diverges for all $|\lambda|>0$, then $\lambda\notin \esp$ for any $|\lambda|>0$.
       \item [(iii)] If there exists $|\lambda_0|>0$ such that the integral in \eqref{integral acotacion por abajo convexo} converges for all $|\lambda|< |\lambda_0|$ and diverges for $|\lambda| > |\lambda_0|$, then $\lambda \in \esp$ if $|\lambda|<|\lambda_0|$ and $\lambda\notin \esp$ if $|\lambda|>|\lambda_0|$. In general, the convergence behaviour of the integral in \eqref{integral acotacion por abajo convexo} for $|\lambda_0|$ is not sufficient to decide whether $\lambda_0=|\lambda_0|e^{i\left( \frac{\pi}{2}+\al_\O\right)}$ belongs to $\esp$.
   \end{enumerate}
\end{remark}

\section{Spectral consequences for composition operators}\label{seccion espectro}
As it was pointed out in the introduction, if $(\Fi_t)_{t\geq 0}$ is a parabolic semigroup on $\D$, we can recover the point spectrum on $H^p$ of the composition operators $(C_{\Fi_t})_{t\geq 0}$ through the identity
\begin{equation}\label{identidad espectro puntual}
    \sigma_p(C_{\Fi_t}\mid_{H^p}) = \{e^{t\lambda} : \lambda \in \esp\} \qquad (t>0).
\end{equation}
In this section, we exploit the previous equality to completely characterize the spectrum $\sigma(C_{\Fi_t}\mid_{H^p})$ of these operators in some special cases. 

\medskip

Given a linear bounded operator $T:X\rightarrow X$ acting on a complex Banach space, we denote by $r(T)$ its spectral radius, which is given by
$$
r(T) = \sup\{|\lambda|:\lambda \in \sigma(T)\},
$$
where $\sigma(T)$ is the spectrum of $T$. Siskakis \cite[Corollary 3.2]{Tesis_Siskakis} characterized the spectral radius of composition operators $C_\phi$ acting on $H^p$ induced by a holomorphic self-map of the unit disc $\phi$ with Denjoy-Wolff point $1$. Indeed, he proved that $$r(C_\phi)=\phi'(1).$$
As a consequence, regarding the definition of parabolic semigroups, the following holds:
\begin{proposition}\label{proposicion radio espectral}
    Let $(\Fi_t)_{t\geq 0}$ be a parabolic semigroup on $\D$. Then,
    $r(C_{\Fi_t}) = 1$ for all $t>0$.
\end{proposition}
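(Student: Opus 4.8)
The plan is to reduce the statement to the formula $r(C_\phi)=\phi'(1)$ of Siskakis recalled just above, which applies to self-maps whose Denjoy--Wolff point is $1$. The first step is to record what the parabolic hypothesis provides: since $(\Fi_t)_{t\ge0}$ is parabolic it is, in particular, non-elliptic, so its Denjoy--Wolff point $\tau$ lies on $\T$, each $\Fi_t$ fixes $\tau$, and the angular derivative there is $\Fi_t'(\tau)=e^{t\alpha}$ with $\alpha=0$; hence $\Fi_t'(\tau)=1$ for every $t\ge0$.

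Next I would conjugate by a rotation to move the Denjoy--Wolff point to $1$. Let $R(z)=\overline{\tau}\,z$, an automorphism of $\D$ with $R(\tau)=1$, and set $\widetilde{\Fi}_t=R\circ\Fi_t\circ R^{-1}$. Then $(\widetilde{\Fi}_t)_{t\ge0}$ is again a parabolic semigroup, now with Denjoy--Wolff point $1$, and the chain rule gives $(\widetilde{\Fi}_t)'(1)=\overline{\tau}\,\Fi_t'(\tau)\,\tau=\Fi_t'(\tau)=1$. On the operator side, $C_R$ is bounded and invertible on $H^p$ with $C_R^{-1}=C_{R^{-1}}$, and from the identity $C_{\psi\circ\phi}=C_\phi C_\psi$ one obtains $C_{\widetilde{\Fi}_t}=C_R^{-1}C_{\Fi_t}C_R$. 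Thus $C_{\widetilde{\Fi}_t}$ and $C_{\Fi_t}$ are similar operators, so they have the same spectrum and, in particular, the same spectral radius: $r(C_{\widetilde{\Fi}_t})=r(C_{\Fi_t})$.

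Finally I would apply Siskakis's formula to $\widetilde{\Fi}_t$: since its Denjoy--Wolff point is $1$, one gets $r(C_{\widetilde{\Fi}_t})=(\widetilde{\Fi}_t)'(1)=1$, and therefore $r(C_{\Fi_t})=r(C_{\widetilde{\Fi}_t})=1$ for every $t>0$, as claimed.

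I do not expect a genuine obstacle here: the argument is essentially the cited theorem combined with an elementary conjugation. The only points requiring a little care are the verification that $\Fi_t'(\tau)=1$ (this is precisely the definition of ``parabolic'', namely $\alpha=0$, recorded in the Preliminaries) and the bookkeeping showing that conjugation by the rotation $R$ simultaneously preserves the parabolic class, fixes the value of the derivative at the boundary fixed point, and implements a similarity of the associated composition operators on $H^p$.
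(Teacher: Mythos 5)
Your proposal is correct and follows essentially the same route as the paper: the result is a direct consequence of Siskakis's formula $r(C_{\Fi})=\Fi'(1)$ together with the fact that a parabolic semigroup has angular derivative $e^{t\alpha}=1$ at its Denjoy--Wolff point. The rotation conjugation you add is just explicit bookkeeping for the normalization of the Denjoy--Wolff point to $1$, and it is carried out correctly.
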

We can now state the main result of this section:

\begin{theorem}\label{teorema espectro}
Let $(\Fi_t)_{t\geq 0}$ be a parabolic semigroup on $\D$. Suppose that $\Omega \subseteq S_{p}(\al_0,\beta_0)$ for some $p\in \R$, $\al_0\in [0,\pi)$, and $\beta_0\in [0,\pi)$, where $0<\al_0+\beta_0\leq \pi$. Assume that either
\begin{enumerate}
    \item [(i)] $\al_\O^o =0$ and $\beta_\O^o <\pi$, or
    \item [(ii)] $\al_\O^o < \pi$ and $\b_\O^o = 0$.
\end{enumerate}
Then, $$\sigma(C_{\Fi_t}\mid_{H^p}) = \overline{\D}$$
for all $t>0$.
\end{theorem}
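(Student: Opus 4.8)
The plan is to establish the two inclusions $\sigma(C_{\Fi_t}\mid_{H^p})\subseteq\overline{\D}$ and $\overline{\D}\subseteq\sigma(C_{\Fi_t}\mid_{H^p})$ separately. The first is immediate from Proposition~\ref{proposicion radio espectral}: the spectral radius of $C_{\Fi_t}$ equals $1$, so $\sigma(C_{\Fi_t}\mid_{H^p})\subseteq\overline{\D}$ for every $t>0$. For the second inclusion I would use that the spectrum is closed and contains the point spectrum, which by~\eqref{identidad espectro puntual} equals $\{e^{t\lambda}:\lambda\in\esp\}$; it therefore suffices to show that this set is dense in $\overline{\D}$ for each $t>0$.

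To produce enough eigenvalues I would feed the containment of Theorem~\ref{teorema contenciones general} into~\eqref{identidad espectro puntual}. Since $\Omega\subseteq S_p(\al_0,\beta_0)$ with $0<\al_0+\beta_0\le\pi$, that theorem gives
\[
\esp\ \supseteq\ \mathcal{S}:=\Bigl\{re^{i\theta}:\ r\ge 0,\ \tfrac{\pi}{2}+\al_\O^o<\theta<\tfrac{3\pi}{2}-\beta_\O^o\Bigr\}.
\]
In case~(i) we have $\al_\O^o=0$ and $\beta_\O^o<\pi$, so the sector $\mathcal{S}$ is nondegenerate and its angular range $\bigl(\tfrac{\pi}{2},\,\tfrac{3\pi}{2}-\beta_\O^o\bigr)$ contains a one-sided neighbourhood of $\tfrac{\pi}{2}$; in case~(ii) we have $\beta_\O^o=0$ and $\al_\O^o<\pi$, so the angular range $\bigl(\tfrac{\pi}{2}+\al_\O^o,\,\tfrac{3\pi}{2}\bigr)$ contains a one-sided neighbourhood of $\tfrac{3\pi}{2}$; in either case $\mathcal{S}$ is contained in the closed left half-plane. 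Because $\mathcal{S}$ is invariant under positive dilations, $\{e^{t\lambda}:\lambda\in\mathcal{S}\}=\exp(\mathcal{S})$ for every $t>0$, and it remains to prove $\exp(\mathcal{S})\supseteq\{z\in\C:0<|z|<1\}$, whose closure is $\overline{\D}$.

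This last geometric step is where the hypotheses~(i) and~(ii) are actually used, and is the only point requiring real care. Fix $\rho\in(0,1)$ and set $u=\log\rho<0$. In case~(i) the argument of $u+iv$ decreases to $\tfrac{\pi}{2}$ as $v\to+\infty$, hence $u+iv\in\mathcal{S}$ once $v>v_0$ for some $v_0=v_0(\rho)$; in case~(ii), $\arg(u+iv)$ increases to $\tfrac{3\pi}{2}$ as $v\to-\infty$, hence $u+iv\in\mathcal{S}$ once $v<v_1$ for some threshold $v_1$. In either situation $e^{u+iv}=\rho e^{iv}$ while $v$ ranges over a half-line, so $e^{iv}$ runs through the whole unit circle and the entire circle $\{|z|=\rho\}$ is contained in $\exp(\mathcal{S})$. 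Letting $\rho$ vary over $(0,1)$ gives $\exp(\mathcal{S})\supseteq\{0<|z|<1\}$, whence $\{0<|z|<1\}\subseteq\sigma_p(C_{\Fi_t}\mid_{H^p})\subseteq\sigma(C_{\Fi_t}\mid_{H^p})$; since the spectrum is closed this forces $\overline{\D}\subseteq\sigma(C_{\Fi_t}\mid_{H^p})$, and together with the first paragraph we conclude $\sigma(C_{\Fi_t}\mid_{H^p})=\overline{\D}$. The mild subtlety is that $\mathcal{S}$ can be arbitrarily thin (when $\beta_\O^o$, resp.\ $\al_\O^o$, is close to $\pi$); this causes no problem because $\exp$ winds around the origin infinitely often along every vertical line, so even a thin sector pointing towards $+i\infty$ or $-i\infty$ has exponential image filling the punctured disk.
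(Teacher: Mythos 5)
Your proposal is correct and follows essentially the same route as the paper: the upper inclusion from Proposition \ref{proposicion radio espectral}, the sector of eigenvalues from Theorem \ref{teorema contenciones general} fed through \eqref{identidad espectro puntual}, the observation that the exponential image of that sector fills the punctured disc, and closedness of the spectrum. The only difference is that you spell out explicitly (via the vertical-line/winding argument) the step the paper dismisses as ``easy to show'', and you treat case (ii) directly rather than by symmetry.
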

\begin{proof}
  First, by Proposition \ref{proposicion radio espectral}, $\sigma(C_{\Fi_t}\mid_{H^p}) \subseteq \overline{\D}$ for all $t>0$. Now, let us assume $(i)$, the reasoning for $(ii)$ is equivalent. By Theorem \ref{teorema contenciones general}, it follows that
    $$A:=\left\{re^{i\theta}: r\geq 0, \frac{\pi}{2}<\theta < \frac{3\pi}{2}-\beta_\O^o\right\} \subseteq \sigma_p(\Delta).$$
    Now, it is easy to show that $\exp(A) = \D\setminus \{0\}$, which along with \eqref{identidad espectro puntual} yields that
    $$
    \D\setminus \{0\}\subseteq \sigma(C_{\Fi_t}\mid_{H^p}) \subseteq \overline{\D}
    $$
    for all $t>0$. Recalling that $\sigma(C_{\Fi_t}\mid_{H^p})$ is closed, the result follows. 
\end{proof}

This result may be applied to describe the spectrum of composition operators associated to parabolic semigroups $(\Fi_t)_{t\geq 0}$ with positive hyperbolic step:

\begin{corollary}\label{corolario paso hiperbolico}
    Let $(\Fi_t)_{t\geq 0}$ be a parabolic semigroup on $\D$ of positive hyperbolic step. Then, 
    $$
    \T\subseteq \sigma(C_{\Fi_t}\mid_{H^p})\subseteq \overline{\D}.
    $$
    Moreover, if $\Omega$ is contained in an angular sector $S_p(\al,0)$ or $S_p(0,\beta)$ for some $p\in \C$ and $\al<\pi$, $\beta < \pi$, then 
    $$\sigma(C_{\Fi_t}\mid_{H^p})= \overline{\D}.$$
\end{corollary}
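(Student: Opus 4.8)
The statement to prove is Corollary~\ref{corolario paso hiperbolico}, which has two parts. The first part asserts that $\T \subseteq \sigma(C_{\Fi_t}\mid_{H^p}) \subseteq \overline{\D}$ for any parabolic semigroup of positive hyperbolic step, and the second part upgrades the lower bound to $\sigma(C_{\Fi_t}) = \overline{\D}$ under the additional hypothesis that $\Omega$ is contained in a sector $S_p(\al,0)$ or $S_p(0,\beta)$ with $\al<\pi$, $\beta<\pi$. The plan is to derive both parts from the preceding machinery: Proposition~\ref{proposicion radio espectral} for the outer inclusion, and Theorem~\ref{teorema espectro} (together with the geometric characterization of positive hyperbolic step) for the inner ones.

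First I would recall that, by Proposition~\ref{proposicion radio espectral}, $r(C_{\Fi_t})=1$ for all $t>0$, so $\sigma(C_{\Fi_t}\mid_{H^p}) \subseteq \overline{\D}$; this takes care of the upper inclusion in both parts. For the lower bound in the first part, I would use the fact, recalled in the preliminaries, that a parabolic semigroup has positive hyperbolic step if and only if its Koenigs domain $\Omega$ is contained in a horizontal half-plane. Since $\Omega$ is starlike at infinity and contained in a horizontal half-plane, one of the two outer maximal angles must vanish: indeed, up to the symmetry $y\mapsto -y$, being contained in (say) an upper half-plane $\{\Im z > c\}$ forces $\beta_\O^o = 0$ (the boundary near $-\infty$ in the real direction stays above a horizontal line, so no rotation by a positive angle about the upper direction is needed to contain $\Omega$ in a half-plane of the required type). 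Granting $\beta_\O^o=0$ and $\al_\O^o < \pi$ (which always holds since $\Omega\neq\C$), Theorem~\ref{teorema espectro}(ii) applies and gives $\sigma(C_{\Fi_t}) = \overline{\D}$; in particular $\T\subseteq\sigma(C_{\Fi_t})$. If instead $\Omega$ lies in a lower half-plane, the symmetric argument with Theorem~\ref{teorema espectro}(i) gives the same conclusion. Wait — this would actually prove $\sigma(C_{\Fi_t})=\overline\D$ already in the first part, so I should be careful: the correct reading is that positive hyperbolic step alone guarantees $\Omega$ is in a horizontal half-plane but does \emph{not} by itself force one of the outer maximal angles to vanish, because $\Omega$ could spiral. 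So for the first part I would instead argue directly: $\Omega\subseteq \{\Im z>c\}$ means $\Omega \subseteq p + e^{i0}\C_+$ after a $90^\circ$ rotation is \emph{not} automatic — rather, I use that for each $\mu$ on the unit circle in the direction making $\mu\Omega$ a left half-plane's subset the exponential $e^{\mu h}\in H^\infty$; more cleanly, I would invoke that a horizontal half-plane is a rotation of $\C_+$, so $\Omega \subseteq z_0 + e^{\pm i\pi/2}\C_+$, whence $e^{\lambda h}\in H^\infty$ for $\lambda$ on the ray $\ell(\pi/2)$ or $\ell(3\pi/2)$ by Proposition~\ref{proposicion contenido semiplano}; combined with Lemma~\ref{lema conexion} and Theorem~\ref{teorema contenciones general} one gets a half-open sector of the point spectrum whose exponential image is $\D\setminus\{0\}$, and closedness of the spectrum finishes $\T\subseteq\sigma(C_{\Fi_t})$.

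For the second part, the hypothesis $\Omega\subseteq S_p(\al,0)$ with $\al<\pi$ means precisely $\beta_\O^o=0$ (the domain sits inside a sector whose upper edge is horizontal), and $\al_\O^o \leq \al < \pi$. Then Theorem~\ref{teorema espectro}(ii) applies verbatim and yields $\sigma(C_{\Fi_t})=\overline\D$. The case $\Omega\subseteq S_p(0,\beta)$ with $\beta<\pi$ is symmetric: here $\al_\O^o=0$ and $\beta_\O^o\leq\beta<\pi$, so Theorem~\ref{teorema espectro}(i) gives the conclusion. I would also note that a sector $S_p(\al,0)$ or $S_p(0,\beta)$ with the stated angle restrictions is indeed contained in a horizontal half-plane, so these domains genuinely give positive hyperbolic step semigroups and the second statement is a bona fide strengthening of the first in those cases.

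The main obstacle I anticipate is the clean bookkeeping in the first part: translating ``positive hyperbolic step'' $\Longleftrightarrow$ ``$\Omega$ contained in a horizontal half-plane'' into a usable statement about maximal angles or about a ray on which $e^{\lambda h}\in H^\infty$, without accidentally over-claiming that an outer maximal angle vanishes (which would collapse parts (i) and (ii) of the corollary). The safe route is to avoid maximal angles entirely for the first part and argue: a horizontal half-plane equals $e^{\pm i\pi/2}(\C_+ + c)$ for suitable $c$, so $\Omega \subseteq z_0 + e^{i\pi/2}\C_+$ (say), hence by Proposition~\ref{proposicion contenido semiplano} every $\lambda$ with $\lambda\Omega$ in a left half-plane lies in $\esp$, and the set of such $\lambda$ is an open sector of aperture $\pi$ symmetric about the relevant ray; its exponential is $\D\setminus\{0\}$, so $\D\setminus\{0\}\subseteq\sigma(C_{\Fi_t})\subseteq\overline\D$ by \eqref{identidad espectro puntual}, and taking closures gives $\T\subseteq\sigma(C_{\Fi_t})$. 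Everything else is a direct citation of Theorem~\ref{teorema espectro} and Proposition~\ref{proposicion radio espectral}.
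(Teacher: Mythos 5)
Your handling of the outer inclusion (Proposition \ref{proposicion radio espectral}) and of the second statement (from $\Omega\subseteq S_p(\al,0)$ read off $\b_\O^o=0$ and $\al_\O^o\leq\al<\pi$, resp. from $\Omega\subseteq S_p(0,\b)$ read off $\al_\O^o=0$ and $\b_\O^o\leq\b<\pi$, then cite Theorem \ref{teorema espectro}) is correct and coincides with the paper's argument. The gap is in your ``safe route'' for $\T\subseteq\sigma(C_{\Fi_t}\mid_{H^p})$. You claim that, once $\Omega\subseteq z_0+e^{i\pi/2}\C_+$, the set of $\lambda$ for which $\lambda\Omega$ lies in a left half-plane is an open sector of aperture $\pi$ (equivalently, that Lemma \ref{lema conexion} and Theorem \ref{teorema contenciones general} produce a half-open sector of $\esp$ whose exponential image is $\D\setminus\{0\}$), and you then take closures. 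This step fails: positive hyperbolic step only gives containment of $\Omega$ in a horizontal half-plane, and when $\Omega$ \emph{is} that half-plane (the parabolic group of automorphisms, which has positive hyperbolic step) the set of $\lambda$ with $\lambda\Omega$ in a left half-plane is exactly the single ray $\ell(\pi/2)$ (resp.\ $\ell(3\pi/2)$), not a sector, and the sector of Theorem \ref{teorema contenciones general} is empty because $\b_\O^o=\pi$ (resp.\ $\al_\O^o=\pi$). Indeed the intermediate claim $\D\setminus\{0\}\subseteq\sigma(C_{\Fi_t})$ is simply false under the hypotheses of the first statement: for the parabolic automorphism group $\sigma(C_{\Fi_t})=\T$, as the paper itself points out immediately after the corollary. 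So no argument can place an open punctured disc inside the spectrum using positive hyperbolic step alone.

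The conclusion you need is nonetheless already contained in the correct part of your argument: Proposition \ref{proposicion contenido semiplano} does give the whole ray $\ell(\pi/2)$ (or $\ell(3\pi/2)$, depending on which horizontal half-plane contains $\Omega$) inside $\esp$, and for fixed $t>0$ the exponential of that ray, $\{e^{it r}: r\geq 0\}$ (resp.\ $\{e^{-itr}:r\geq0\}$), is all of $\T$; hence \eqref{identidad espectro puntual} yields $\T\subseteq\sigma_p(C_{\Fi_t}\mid_{H^p})\subseteq\sigma(C_{\Fi_t}\mid_{H^p})$ with no sector and no closure argument. This is essentially the paper's route: the paper obtains the same ray by combining the half-plane case of Theorem \ref{espectro sector angular} (the case $\al+\b=\pi$) with the monotonicity Proposition \ref{contencion espectro puntual}, whereas you get it directly from Proposition \ref{proposicion contenido semiplano}; the two mechanisms are interchangeable here. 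With that repair, the rest of your proposal goes through.
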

\begin{proof}
    Since $(\Fi_t)_{t\geq 0}$ is of positive hyperbolic step, $\Omega$ is contained in a horizontal half-plane. By Theorem \ref{espectro sector angular}, Proposition \ref{contencion espectro puntual}, and \eqref{identidad espectro puntual}, $\T\subseteq \sigma_p(C_{\Fi_t}\mid_{H^p})$. Proposition \ref{proposicion radio espectral} yields $\sigma(C_{\Fi_t}\mid_{H^p})\subseteq \overline{\D}.$ Now, assume $\Omega$ is contained in an angular sector $S_p(\al,0)$. The other case is equivalent. This implies that $\b_\O^o =0$ and $\al_\O^o >0$, so Theorem \ref{teorema espectro} yields the result.
\end{proof}
It is worth mentioning that in the case in which $(\Fi_t)_{t\in \R}$ is a parabolic group of automorphisms of $\D$, $\Omega$ is a horizontal half-plane, so it is of positive hyperbolic step and $\sigma(C_{\Fi_t}\mid_{H^p}) = \T$. However, for any other parabolic semigroup $(\Fi_t)_{t\geq 0}$ not consisting of automorphisms of $\D$, the operators $C_{\Fi_t}$ are not invertible, so $0 \in \sigma(C_{\Fi_t}\mid_{H^p})$. 

\medskip

\textbf{Acknowledgements.} We thank Professor Manuel D. Contreras for his careful reading of the manuscript and his valuable suggestions to improve it. We also acknowledge Professor Francisco J. Cruz Zamorano for sharing his insights on harmonic measure.

\bibliographystyle{siam}
\bibliography{bibliografianew}

@book{BCD,
  title={Continuous semigroups of holomorphic self-maps of the unit disc},
  author={Bracci, Filippo and Contreras, Manuel D and D{\'\i}az-Madrigal, Santiago},
  year={2020},
  publisher={Springer}
}

@article{BP,
  title={Semigroups of analytic functions and composition operators},
  author={Berkson, Earl and Porta, Horacio},
  journal={Michigan Mathematical Journal},
  volume={\textbf{25}},
  number={1},
  pages={101--115},
  year={1978},
  publisher={University of Michigan, Department of Mathematics}
}

@article{Betsakos,
  title={On the eigenvalues of the infinitesimal generator of a semigroup of composition operators},
  author={Betsakos, Dimitrioss},
  journal={Journal of Functional Analysis},
  volume={\textbf{273}},
  number={7},
  pages={2249--2274},
  year={2017},
  publisher={Elsevier}
}

@article{BeCD,
  title={On the rate of convergence of semigroups of holomorphic functions at the {D}enjoy--{W}olff point},
  author={Betsakos, Dimitrios and Contreras, Manuel D and Diaz-Madrigal, Santiago},
  journal={Revista Matematica Iberoamericana},
  volume={\textbf{36}},
  number={6},
  pages={1659--1686},
  year={2020}
}

@misc{BCZ,
      title={On the rates of convergence of orbits in semigroups of holomorphic functions}, 
      author={Dimitrios Betsakos and Francisco J. Cruz-Zamorano and Konstantinos Zarvalis},
      year={2025},
      note= {	arXiv:2503.20388, \url{https://arxiv.org/abs/2503.20388}      
}}

@article{BGY,
  title={Complete frequencies for {K}oenigs domains},
  author={Bracci, Filippo and Gallardo-Guti{\'e}rrez, Eva A. and Yakubovich, Dmitry},
  journal={J. Eur. Math. Soc. (in press)},
  year={2025}
}

@article {CS,
    AUTHOR = {Caughran, James G. and Schwartz, Howard J.},
     TITLE = {Spectra of {C}ompact {C}omposition {O}perators},
   JOURNAL = {Proc. Amer. Math. Soc.},
  FJOURNAL = {Proceedings of the American Mathematical Society},
    VOLUME = {\textbf{51}},
      YEAR = {1975},
     PAGES = {127--130},
      ISSN = {0002-9939,1088-6826},
   MRCLASS = {47B37 (30A18)},
  MRNUMBER = {377579},
MRREVIEWER = {Joel\ H.\ Shapiro},
       DOI = {10.2307/2039858},
       URL = {https://doi.org/10.2307/2039858},
}

@article {CCKR,
    AUTHOR = {Contreras, Manuel D. and Cruz-Zamorano, Francisco J. and
              Kourou, Maria and Rodr\'iguez-Piazza, Luis},
     TITLE = {On the {H}ardy number of {K}oenigs domains},
   JOURNAL = {Anal. Math. Phys.},
  FJOURNAL = {Analysis and Mathematical Physics},
    VOLUME = {\textbf{14}},
      YEAR = {2024},
    NUMBER = {6},
     PAGES = {Paper No. 119, 21},
      ISSN = {1664-2368,1664-235X},
   MRCLASS = {30D05 (30C85 30H10 37F99 39B32)},
  MRNUMBER = {4813211},
MRREVIEWER = {Konstantinos\ Zarvalis},
       DOI = {10.1007/s13324-024-00981-4},
       URL = {https://doi.org/10.1007/s13324-024-00981-4},
}

@article {CD,
    AUTHOR = {Contreras, Manuel D. and D\'iaz-Madrigal, Santiago},
     TITLE = {Analytic flows on the unit disk: angular derivatives and
              boundary fixed points},
   JOURNAL = {Pacific J. Math.},
  FJOURNAL = {Pacific Journal of Mathematics},
    VOLUME = {\textbf{222}},
      YEAR = {2005},
    NUMBER = {2},
     PAGES = {253--286},
      ISSN = {0030-8730,1945-5844},
   MRCLASS = {30C20 (30D05 30F45 37F10)},
  MRNUMBER = {2225072},
MRREVIEWER = {Alexander\ Vasil\cprime ev},
       DOI = {10.2140/pjm.2005.222.253},
       URL = {https://doi.org/10.2140/pjm.2005.222.253},
}

@article{Cowen,
  title={Composition operators on ${H}^2$},
  author={Cowen, Carl C},
  journal={Journal of Operator Theory},
  pages={77--106},
  year={1983}
}

@book {CM,
    AUTHOR = {Cowen, Carl C. and MacCluer, Barbara D.},
     TITLE = {Composition operators on spaces of analytic functions},
    SERIES = {Studies in Advanced Mathematics},
 PUBLISHER = {CRC Press, Boca Raton, FL},
      YEAR = {1995},
     PAGES = {xii+388},
      ISBN = {0-8493-8492-3},
   MRCLASS = {47B38 (30D55 46E15)},
  MRNUMBER = {1397026},
MRREVIEWER = {John\ N.\ McDonald},
}

@article {Deddens,
    AUTHOR = {Deddens, James A.},
     TITLE = {Analytic {T}oeplitz and {C}omposition {O}perators},
   JOURNAL = {Canadian J. Math.},
  FJOURNAL = {Canadian Journal of Mathematics. Journal Canadien de
              Math\'ematiques},
    VOLUME = {\textbf{24}},
      YEAR = {1972},
     PAGES = {859--865},
      ISSN = {0008-414X,1496-4279},
   MRCLASS = {47B35},
  MRNUMBER = {310691},
MRREVIEWER = {C.\ R.\ Putnam},
       DOI = {10.4153/CJM-1972-085-8},
       URL = {https://doi.org/10.4153/CJM-1972-085-8},
}

@book{Duren,
  title={Theory of $H^p$ {S}paces},
  author={Duren, Peter L},
  year={1970},
  publisher={Pure Appl. Math., Vol. 38, Academic Press, New York-London}
}

@book{EN,
  title={A short course on operator semigroups},
  author={Engel, Klaus-Jochen and Nagel, Rainer},
  year={2006},
  publisher={Springer Science \& Business Media}
}

@book{GaMA,
  title={Harmonic measure},
  author={Garnett, John B and Marshall, Donald E},
  number={2},
  year={2005},
  publisher={Cambridge University Press}
}

@article{GD,
    AUTHOR = {Gonz\'alez-Do\~na, F. Javier},
     TITLE = {On commutants of composition operators embedded into
              {$C_0$}-semigroups},
   JOURNAL = {Rev. R. Acad. Cienc. Exactas F\'is. Nat. Ser. A Mat. RACSAM},
  FJOURNAL = {Revista de la Real Academia de Ciencias Exactas, F\'isicas y
              Naturales. Serie A. Matematicas. RACSAM},
    VOLUME = {119},
      YEAR = {2025},
    NUMBER = {4},
     PAGES = {Paper No. 102, 21},
      ISSN = {1578-7303,1579-1505},
   MRCLASS = {47B33 (47A15 47D06)},
  MRNUMBER = {4938278},
       DOI = {10.1007/s13398-025-01770-9},
       URL = {https://doi.org/10.1007/s13398-025-01770-9},
}

@book {HP,
    AUTHOR = {Hille, Einar and Phillips, Ralph S.},
     TITLE = {Functional {A}nalysis and {S}emi-groups},
    SERIES = {American Mathematical Society Colloquium Publications},
    VOLUME = {31},
      NOTE = {rev. ed},
 PUBLISHER = {American Mathematical Society, Providence, RI},
      YEAR = {1957},
     PAGES = {xii+808},
   MRCLASS = {46.2X},
  MRNUMBER = {89373},
MRREVIEWER = {M.\ H.\ Stone},
}

@article {Kamowitz,
    AUTHOR = {Kamowitz, Herbert},
     TITLE = {The spectra of composition operators on {$H\sp{p}$}},
   JOURNAL = {J. Funct. Anal.},
  FJOURNAL = {Journal of Functional Analysis},
    VOLUME = {\textbf{18}},
      YEAR = {1975},
     PAGES = {132--150},
      ISSN = {0022-1236},
   MRCLASS = {47B37},
  MRNUMBER = {407645},
MRREVIEWER = {Joel\ H.\ Shapiro},
       DOI = {10.1016/0022-1236(75)90021-x},
       URL = {https://doi.org/10.1016/0022-1236(75)90021-x},
}

@article {Nordgren,
    AUTHOR = {Nordgren, Eric A.},
     TITLE = {Composition operators},
   JOURNAL = {Canadian J. Math.},
  FJOURNAL = {Canadian Journal of Mathematics. Journal Canadien de
              Math\'ematiques},
    VOLUME = {20},
      YEAR = {1968},
     PAGES = {442--449},
      ISSN = {0008-414X,1496-4279},
   MRCLASS = {47.25},
  MRNUMBER = {223914},
MRREVIEWER = {E.\ R.\ Deal},
       DOI = {10.4153/CJM-1968-040-4},
       URL = {https://doi.org/10.4153/CJM-1968-040-4},
}

@article {Poggi-Corradini,
    AUTHOR = {Poggi-Corradini, Pietro},
     TITLE = {The {H}ardy class of {K}oenigs maps},
   JOURNAL = {Michigan Math. J.},
  FJOURNAL = {Michigan Mathematical Journal},
    VOLUME = {\textbf{44}},
      YEAR = {1997},
    NUMBER = {3},
     PAGES = {495--507},
      ISSN = {0026-2285,1945-2365},
   MRCLASS = {47B38 (30D35 30D55 47A10)},
  MRNUMBER = {1481115},
MRREVIEWER = {William\ Thomas\ Ross},
       DOI = {10.1307/mmj/1029005784},
       URL = {https://doi.org/10.1307/mmj/1029005784},
}

@book{Ran,
  title={Potential {T}heory in the {C}omplex {P}lane},
  author={Ransford, Thomas},
  volume={\textbf{28}},
  year={1995},
  publisher={Cambridge University Press}
}

@book {Tesis_Siskakis,
    AUTHOR = {Siskakis, Aristomenis Georgios},
     TITLE = {Semigroups of composition operators and the Cesàro operator on $H^p(\D)$ },
      NOTE = {Thesis (Ph.D.)--University of Illinois at Urbana-Champaign},
 PUBLISHER = {ProQuest LLC, Ann Arbor, MI},
      YEAR = {1985},
     PAGES = {82},
   MRCLASS = {99-05},
  MRNUMBER = {2634547},
       URL =
              {http://gateway.proquest.com/openurl?url_ver=Z39.88-2004&rft_val_fmt=info:ofi/fmt:kev:mtx:dissertation&res_dat=xri:pqdiss&rft_dat=xri:pqdiss:8600315},
}

@article{Siskakis,
  title={Semigroups of composition operators on spaces of analytic functions, a review},
  author={Siskakis, Aristomenis G},
  journal={Contemporary Mathematics},
  volume={\textbf{213}},
  pages={229--252},
  year={1998},
  publisher={Providence, RI: American Mathematical Society}
}

@article{kourouetal,
AUTHOR = {Kourou, Maria and Theodosiadis, Eleftherios K. and Zarvalis,
              Konstantinos},
     TITLE = {Eigenvalues for infinitesimal generators of semigroups of
              composition operators},
   JOURNAL = {J. Funct. Anal.},
  FJOURNAL = {Journal of Functional Analysis},
    VOLUME = {291},
      YEAR = {2026},
    NUMBER = {6},
     PAGES = {Paper No. 111554},
     SSN = {0022-1236,1096-0783},
   MRCLASS = {47B33 (30C45 30D05 30H99 47D06)},
  MRNUMBER = {5071388},
       DOI = {10.1016/j.jfa.2026.111554},
       URL = {https://doi.org/10.1016/j.jfa.2026.111554},
}

\end{document}